\definecolor{darkblue}{rgb}{0,0,0.68}
\theoremstyle{definition}
\newtheorem{Def}{Definition}[section]
\newtheorem{Fact}[Def]{Fact}
\newtheorem{Thm}[Def]{Theorem}
\newtheorem{Lem}[Def]{Lemma}
\newtheorem{Rem}[Def]{Remark}
\newtheorem{Cor}[Def]{Corollary}
\newtheorem{Claim}{Claim}[Def]
\newtheorem*{Claim*}{Claim}
\newtheorem{Q}[Def]{Problem}
\newtheorem*{Subclaim*}{Subclaim}
\newtheorem{Question}{Question}
\newtheorem*{Question*}{Question}
\newtheorem{Observation}[Def]{Observation}
\newtheorem{Step}{Step}
\newtheorem*{Step*}{Step}
\newtheorem*{Step-2A}{Step 2A}
\newtheorem*{Step-2B}{Step 2B}
\newcommand{\biggg}[1]{{\hbox{$\left#1\vbox to 20.5pt{}\right.\n@space$}}}
\newcommand{\Biggg}[1]{{\hbox{$\left#1\vbox to 23.5pt{}\right.\n@space$}}}
\newcommand{\bigggg}[1]{{\hbox{$\left#1\vbox to 26.5pt{}\right.\n@space$}}}
\newcommand{\Bigggg}[1]{{\hbox{$\left#1\vbox to 29.5pt{}\right.\n@space$}}}
\newcommand{\biggggg}[1]{{\hbox{$\left#1\vbox to 32.5pt{}\right.\n@space$}}}
\newcommand{\Biggggg}[1]{{\hbox{$\left#1\vbox to 35.5pt{}\right.\n@space$}}}
\newcommand{\bigggggg}[1]{{\hbox{$\left#1\vbox to 38.5pt{}\right.\n@space$}}}
\newcommand{\Bigggggg}[1]{{\hbox{$\left#1\vbox to 41.5pt{}\right.\n@space$}}}
\newcommand{\leWR}{\le_{\text{W}}^{\mathbb{R}}}
\newcommand{\nleWR}{\nleq_{\text{W}}^{\mathbb{R}}}
\newcommand{\one}{\mathbbm1}
\newcommand{\dom}{\mathrm{dom}} 
\newcommand{\ran}{\mathrm{ran}}
\newcommand{\QQ}{\mathbb{Q}}
\newcommand{\ZZ}{\mathbb{Z}}
\newcommand{\RR}{\mathbb{R}}
\DeclareMathOperator{\id}{id}
\newcommand{\ltl}{{<_{\text{\itshape l\kern-0.02em e\kern-0.02em x}}}}
\newcommand{\mapone}{\xi} 
\newcommand{\maptwo}{\zeta} 
\newcommand{\arrowup}{\star} 
\newcommand{\cl}{\mathrm{cl}}
\newcommand{\conv}{\mathrm{Conv}}
\let\oldotimes\otimes
\renewcommand{\otimes}[1][0pt]{%
  \mathrel{\raisebox{#1}{$\scriptstyle \oldotimes$}}%
  } 
\newenvironment{enumerate-(a)}{\begin{enumerate}[label={\upshape (\alph*)}, leftmargin=2pc]}{\end{enumerate}}
\newenvironment{enumerate-(a)-r}{\begin{enumerate}[label={\upshape (\alph*)}, leftmargin=2pc,resume]}{\end{enumerate}}
\newenvironment{enumerate-(A)}{\begin{enumerate}[label={\upshape (\Alph*)}, leftmargin=2pc]}{\end{enumerate}}
\newenvironment{enumerate-(A)-r}{\begin{enumerate}[label={\upshape (\Alph*)}, leftmargin=2pc,resume]}{\end{enumerate}}
\newenvironment{enumerate-(i)}{\begin{enumerate}[label={\upshape (\roman*)}, leftmargin=2pc]}{\end{enumerate}}
\newenvironment{enumerate-(i)-r}{\begin{enumerate}[label={\upshape (\roman*)}, leftmargin=2pc,resume]}{\end{enumerate}}
\newenvironment{enumerate-(I)}{\begin{enumerate}[label={\upshape (\Roman*)}, leftmargin=2pc]}{\end{enumerate}}
\newenvironment{enumerate-(I)-r}{\begin{enumerate}[label={\upshape (\Roman*)}, leftmargin=2pc,resume]}{\end{enumerate}}
\newenvironment{enumerate-(1)}{\begin{enumerate}[label={\upshape (\arabic*)}, leftmargin=2pc]}{\end{enumerate}}
\newenvironment{enumerate-(1)-r}{\begin{enumerate}[label={\upshape (\arabic*)}, leftmargin=2pc,resume]}{\end{enumerate}}
\newenvironment{itemizenew}{\begin{itemize}[leftmargin=2pc]}{\end{itemize}}
\title{Borel subsets of the real line and continuous reducibility}
\author{Daisuke Ikegami}
\address{SIT Research Laboratories, Shibaura Institute of Technology, 
3-7-5 Toyosu, Koutou-ward, 135-8548 Tokyo, Japan}
\email{ikegami@shibaura-it.ac.jp}
\author{Philipp Schlicht}
\address{School of Mathematics, University of Bristol, University Walk, Bristol BS8 1TW, UK}
\email{philipp.schlicht@bristol.ac.uk} 
\author{Hisao Tanaka}
\address{3-1-1 \ Kitanodai, Hachioji-shi, 
Tokyo 192-0913, Japan}
\email{tanakahk@jcom.zaq.ne.jp}
\thanks{
The authors would like to thank the referee for suggesting various improvements of the presentation.
The first and second author would like to thank the European Science Foundation for support through the grants 3925 and 3749 of the INFTY program. The first author would further like to thank the Japan Society for the Promotion of Science (JSPS) for support through the grants with JSPS KAKENHI Grant Number 262269 and 15K17586. The second author was partially supported by DFG-grant LU2020/1-1 and a Marie Sk\l odowska-Curie Individual Fellowship with number 794020 during the revision of this paper.
}
\begin{document}

\maketitle

\begin{abstract} 
We study classes of Borel subsets of the real line $\RR$ such as levels of the Borel hierarchy and the class of sets that are reducible to the set $\QQ$ of rationals, endowed with the \emph{Wadge quasi-order} of reducibility with respect to continuous functions on $\RR$. Notably, we explore several structural properties of Borel subsets of $\RR$ that diverge from those of Polish spaces with dimension zero. Our first main result is on the existence of embeddings of several posets into the restriction of this quasi-order to any Borel class that is strictly above the classes of open and closed sets, for instance the linear order $\omega_1$, its reverse $\omega_1^\star$ and the poset $\mathcal{P}(\omega)/\mathsf{fin}$ of inclusion modulo finite error. As a consequence of this proof, it is shown that there are no complete sets for these classes. We further extend the previous theorem to targets that are reducible to $\QQ$. These non-structure results motivate the study of further restrictions of the Wadge quasi-order. In our second main theorem, we introduce a combinatorial property that is shown to characterize those $F_\sigma$ sets that are reducible to $\QQ$. This is applied to construct a minimal set below $\QQ$ and prove its uniqueness up to Wadge equivalence. We finally prove several results concerning gaps and cardinal characteristics of the Wadge quasi-order and thereby answer questions of Brendle and Geschke. 
\end{abstract}


\setcounter{tocdepth}{1}
\tableofcontents

\section{Introduction}



Given a topological space $(X,\tau)$, a subset $A$ of $X$ is called \emph{Wadge reducible} or simply \emph{reducible} to a subset $B$ of $X$ if there is a continuous function $f\colon X\rightarrow X$ with $A=f^{-1}(B)$. 
The \emph{Wadge quasi-order} for $(X,\tau)$ is then defined by letting $A\leq B$ if $A$ is reducible to $B$. 
In particular the Wadge quasi-order on subsets of the \emph{Baire space} 
of all functions $f\colon\omega\rightarrow \omega$ 
defines an important hierarchy of complexity in descriptive set theory and theoretical computer science. 
It refines for instance the difference hierarchy, the Borel hierarchy and the projective hierarchy. 
Therefore its structure has been an object of intensive research (see e.g. \cite{MR2279651, MR730585,MR2906999}) 
and moreover, many structural results could be extended to other classes of functions (see e.g. \cite{MR1992531, MR2601013, MR2499419, MR2605897, 
MR3164747, MR3308051}).

In this paper, we study the Wadge quasi-order for the real line. 
Among the few previously known facts were a result by Hertling \cite{Hertling_PhD} that it is ill-founded and an observation by Andretta \cite[Example 3]{Andretta_SLO}, Steel \cite[Section 1]{Steel_Analytic} and Woodin \cite[Remark 9.26]{Woodin_2010} that there are at least three 
Borel subsets that are pairwise incomparable. 
Moreover, a general result by the second author \cite[Theorem 1.5]{dimension} shows that there are uncountably many pairwise incomparable Borel sets. 

The focus of the analysis in this paper is on complexity, 
initial segments and minimal sets of the quasi-order. 
We now give an overview by stating some of the main results in the same order as they appear in the paper.

We first provide instances of the complexity of the quasi-order by embedding several posets such as in the following result. 

\begin{Thm} (Theorem \ref{embed Pomega mod fin}) \label{intro embed Pomega mod fin}
$\mathcal{P}(\omega)/\mathsf{fin}$ is reducible to the Wadge quasi-order for the real line restricted to its $D_2({\bf \Sigma}^0_1)$-subsets. 
\end{Thm}  

This implies that $\omega_1$ and its reverse linear order $\omega_1^\star$ can also be embedded by a result of Parovi\v{c}enko \cite{Parovicenko} and thus the Wadge quasi-order is ill-founded in a strong sense. 
Moreover, this non-structure result suggests the question whether there are initial segment of this quasi-order below fixed sets which do not allow such embeddings. 
It is natural to first consider the set of rationals, where the next result 
shows that a similar embedding is possible.

\begin{Thm} (Theorem \ref{embedding of Pomega mod fin below Q}) 
$\mathcal{P}(\omega)/\mathsf{fin}$ is reducible to the Wadge quasi-order for the real line restricted to the sets that are reducible to $\QQ$ by monotone functions. 
\end{Thm}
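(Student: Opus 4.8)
The plan is to run the same construction as in Theorem~\ref{embed Pomega mod fin}, keeping its combinatorial skeleton but replacing the basic $D_2(\BSigma^0_1)$ gadget by one that is reducible to $\QQ$ by a monotone function. So I fix a sequence $(I_n)_{n\in\omega}$ of pairwise disjoint nondegenerate closed intervals with $I_{n+1}$ lying to the left of $I_n$ and $I_n\to\{0\}$, together with ``gadgets'' $(G_n)_{n\in\omega}$, where each $G_n\subseteq I_n$ is a fixed set that is dense in $I_n$, and for $A\subseteq\omega$ I set $S_A=\bigcup_{n\in A}G_n$. The genuinely new requirement, absent from Theorem~\ref{embed Pomega mod fin}, is that every $S_A$ be reducible to $\QQ$ by a monotone function, and I would secure this by realizing each $S_A$ as the preimage $f_A^{-1}(\QQ)$ of a single non-decreasing continuous $f_A\colon\RR\to\RR$. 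Such an $f_A$ is easy to write down: pick irrationals $v_0>v_1>\cdots$ with $v_n\downarrow c$, let $f_A\equiv c$ on $(-\infty,0]$, let $f_A$ be a suitable constant irrational on each inter-gadget gap and on each $I_n$ with $n\notin A$, and on each $I_n$ with $n\in A$ let $f_A$ be strictly increasing where $G_n$ should meet a copy of $\QQ$, constant at a rational where $G_n$ should contain a solid subinterval, and strictly increasing again, with all value-ranges abutting at the chosen irrationals so that $f_A$ is continuous and non-decreasing. Then $S_A=f_A^{-1}(\QQ)$ is a ``blown-up copy of $\QQ$'' and lies in the target class by construction.

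It then remains to verify $S_A\leW S_B\iff A\subseteq^* B$. The rigidity that powers the reverse implication transfers essentially verbatim from the zero-dimensional intuition, since it refers only to closures. Indeed, as each $G_m$ is dense in $I_m$, one has $\overline{S_B}=\{0\}\cup\bigcup_{m\in B}I_m$, whose connected components are the single intervals $I_m$ ($m\in B$) and the point $0$. If $g$ witnesses $S_A\leW S_B$ and $n\in A$, then $g(I_n)\subseteq\overline{S_B}$ (points of $S_A\cap I_n$ go to $S_B$, and the remaining points of $I_n$ are limits of these) and $g(I_n)$ is connected; as $g$ cannot be constantly $0\notin S_B$ on the nonempty set $G_n$, I conclude that $g$ maps $I_n$ into a single active target interval $I_{m(n)}$ with $m(n)\in B$, so that $g$ exhibits $G_n$ as Wadge reducible to $G_{m(n)}$.

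The main obstacle is to convert this matching $n\mapsto m(n)$ into the \emph{mod-finite} inclusion $A\subseteq^* B$, which is precisely the delicate combinatorial point already established in Theorem~\ref{embed Pomega mod fin}. The tension is that pairwise Wadge-incomparable gadgets would force the stronger relation $A\subseteq B$ and at the same time block the forward implication (a gadget $G_n$ with $n\in A\setminus B$ could be placed nowhere in $S_B$), whereas gadgets reducing to one another too freely would let $g$ reindex and absorb an infinite part of $A$ into $B$. The gadgets must therefore be engineered so that each $G_n$ reduces into cofinally many $G_m$, allowing the finitely many indices of $A\setminus B$ to be absorbed, while the behaviour of $g$ at the limit point blocks infinite absorption into a fixed target: if infinitely many $I_{n_k}$ were sent into one $I_m$, then from $I_{n_k}\to\{0\}$ and continuity we would get $g(0)\in I_m\subseteq S_B$, contradicting $0\notin S_A=g^{-1}(S_B)$. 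The step I expect to require real care is to produce gadgets that are simultaneously reducible to $\QQ$ by monotone functions \emph{and} realize the same local Wadge-reduction pattern as the gadgets of Theorem~\ref{embed Pomega mod fin}, so that its verification of the mod-finite relation applies unchanged.

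Finally, for the forward implication I would build $g$ locally: the identity on each $I_n$ with $n\in A\cap B$ (where $S_A$ and $S_B$ agree, as $G_n$ does not depend on $A$), a collapse of $I_n$ onto a nearby point of $\RR\setminus S_B$ for $n\in B\setminus A$, the engineered reductions on the finitely many $I_n$ with $n\in A\setminus B$, and monotone interpolation across the gaps routed through the adjacent complementary gaps of $S_B$; continuity at $0$ then follows because all targets of the intervals near $0$ tend to $0$, so setting $g(0)=0$ works. Since finite sets are sent to the Wadge-bottom, the assignment $A\mapsto S_A$ descends to the desired embedding of $\mathcal{P}(\omega)/\mathsf{fin}$.
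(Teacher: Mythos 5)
There is a genuine gap, and it sits exactly where you flag it: the conversion of the matching $n\mapsto m(n)$ into mod-finite inclusion. Your skeleton codes $A$ by the \emph{presence or absence} of gadgets at the positions of a flat $\omega$-sequence of intervals $I_n$ accumulating at $0$, and nothing in that skeleton makes the positions rigid. Concretely, take $A$ the even and $B$ the odd numbers with all gadgets mutually reducible (which your forward direction needs in order to absorb $A\setminus B$): the map sending each $I_{2k}$ homeomorphically into $I_{2k+1}$, interpolated through the complementary gaps and extended by $g(0)=0$, is a continuous reduction of $S_A$ to $S_B$, yet $A\not\subseteq_{\mathsf{fin}}B$. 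Your proposed blocking mechanism does not rule this out: it only addresses infinitely many $I_{n_k}$ landing in a \emph{single} $I_m$, and even there the argument is incorrect, since $I_m\not\subseteq S_B$ (the gadget $G_m$ is a proper dense subset of $I_m$), so $g(0)\in I_m$ is compatible with $g(0)\notin S_B$. Making the gadgets pairwise distinct does not repair this either: any reducibility pattern rich enough to absorb the finitely many indices of $A\setminus B$ into an arbitrary infinite $B$ is automatically rich enough to permit the even/odd reindexing. (A smaller point: with this coding, finite nonempty $A$ give nonempty $S_A$ that are not reducible to $S_\emptyset=\emptyset$, so the map is not literally a reduction of $\subseteq_{\mathsf{fin}}$ on all of $\mathcal{P}(\omega)$.)

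The paper avoids this by abandoning presence/absence coding altogether. It indexes blocks by the ordinal $\omega^\omega$ and fills \emph{every} position with a pair of adjacent blocks of two mutually locally non-reducible types: an $A$-block $[\mapone_0(i),\mapone_1(i))^A$ with $A=f_c^{-1}(\QQ_2+x_0)$ nowhere dense, followed by a $B$-block built from $B=f_c^{-1}(\QQ_2)$, whose complement is nowhere dense. Because neither type reduces to the other on any interval meeting both it and its complement, any reduction must carry the $i$-th block pair onto the $\alpha_i$-th, and a continuity-plus-induction argument shows $\alpha_i=\alpha_0+i$; additive closure of $\omega^{n_0}$ then forces $\alpha_i=i$ for all $i\geq\omega^{n_0}$, which is the positional rigidity your construction lacks. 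The membership of $n$ in $b$ is then coded not by omitting a block but by the choice between $B$ and $B^+=B\cup[0,\tfrac23]$ at position $\omega^n$, and the one-way reducibility ($B^+$ reduces to $B$ fixing the endpoint, but not conversely, since the solid interval at $0$ cannot appear in the image) converts tail-rigidity into exactly $a\subseteq_{\mathsf{fin}}b$. If you want to salvage your outline, you would have to import this kind of global order-rigidity — interleaved incomparable block types along a scattered order whose embeddings onto convex subsets are shifts — rather than hoping the local Wadge pattern of the gadgets alone will do the work.
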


The proof of Theorem \ref{intro embed Pomega mod fin} is further used 
to show that the familiar complete ${\bf \Sigma}^0_\alpha$-sets do not exists for the real line for any countable $\alpha\geq 2$ in Theorem \ref{no complete sets}. 



\begin{Thm} 
There is no ${\bf \Sigma}^0_\alpha$-complete and no ${\bf \Pi}^0_\alpha$-complete subset of $\RR$ for any countable ordinal $\alpha\geq 2$. 
\end{Thm}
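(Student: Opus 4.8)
The plan is to reduce the statement to the non-existence of a $\leW$-greatest element and then to contradict this using the embedding of Theorem~\ref{embed Pomega mod fin}. First I would record two easy reductions. Since $A\leW B$ holds via a continuous $f$ exactly when $\RR\setminus A=f^{-1}[\RR\setminus B]$, a set $C$ is ${\bf \Pi}^0_\alpha$-complete if and only if $\RR\setminus C$ is ${\bf \Sigma}^0_\alpha$-complete; hence it suffices to rule out ${\bf \Sigma}^0_\alpha$-complete sets. Next, since ${\bf \Sigma}^0_\alpha$ is closed under continuous preimages, a ${\bf \Sigma}^0_\alpha$-complete set is precisely a $\leW$-greatest element of the ${\bf \Sigma}^0_\alpha$ subsets of $\RR$. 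So the task becomes: for every ${\bf \Sigma}^0_\alpha$ set $C$, produce a ${\bf \Sigma}^0_\alpha$ set that is not reducible to $C$.

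For the second ingredient I would use that $D_2({\bf \Sigma}^0_1)\subseteq{\bf \Delta}^0_2\subseteq{\bf \Sigma}^0_\alpha$ for every $\alpha\geq 2$, so the witnesses of Theorem~\ref{embed Pomega mod fin} already live inside ${\bf \Sigma}^0_\alpha$. Concretely, the proof of that theorem assigns to each $x\subseteq\omega$ a $D_2({\bf \Sigma}^0_1)$ set $A_x$ with $A_x\leW A_y$ iff $x\subseteq^* y$; via Parovi\v{c}enko's theorem \cite{Parovicenko} this in particular yields a strictly $\leW$-increasing $\omega_1$-chain $\langle A_\xi:\xi<\omega_1\rangle$ of $D_2({\bf \Sigma}^0_1)$ sets. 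Assuming toward a contradiction that $C$ is a greatest ${\bf \Sigma}^0_\alpha$ set, each $A_\xi$ would reduce to $C$.

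The heart of the argument, and the step I expect to be the main obstacle, is to turn ``$C$ is a common upper bound of $\langle A_\xi\rangle$'' into a contradiction. Here I would exploit the two features of $\RR$ on which the whole construction rests: connectedness, and the fact that $C$, being a single ${\bf \Sigma}^0_\alpha$ set for countable $\alpha$, admits a countable description. The plan is to re-run the non-reduction analysis from the proof of Theorem~\ref{embed Pomega mod fin} with $C$ as the target rather than with another $A_y$: because a continuous $f\colon\RR\to\RR$ maps the connected neighbourhoods carrying the pattern of $A_\xi$ to intervals, a reduction $A_\xi=f^{-1}[C]$ can reproduce the accumulation pattern coding $\xi$ only if $C$ already realizes that pattern locally along the image of $f$. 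I would then argue by a pigeonhole/reflection on the countably many ``levels'' present in a fixed ${\bf \Sigma}^0_\alpha$ description of $C$ that the uncountably many strictly increasing patterns cannot all be realized, so some $A_x$ fails to reduce to $C$. This contradicts the maximality of $C$, and the ${\bf \Pi}^0_\alpha$ case then follows by the complementation reduction above. The delicate point is to make the local pattern-realization criterion precise and uniform enough that a single countable invariant of $C$ bounds which patterns can occur, which is exactly the non-structure phenomenon isolated in the proof of Theorem~\ref{embed Pomega mod fin}.
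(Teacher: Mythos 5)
Your reductions at the start are fine: passing from ${\bf \Pi}^0_\alpha$ to ${\bf \Sigma}^0_\alpha$ by complementation, identifying completeness with being a $\leq$-greatest element, and observing that the $D_2({\bf \Sigma}^0_1)$ witnesses already sit inside every ${\bf \Sigma}^0_\alpha$ with $\alpha\geq 2$ are all steps the paper takes implicitly. But the heart of the argument -- why a single set $C$ cannot lie above all the pattern-coding sets -- is exactly the part you leave as a sketch, and the mechanism you propose for it is not the one that works. Two problems. First, the existence of a strictly increasing $\omega_1$-chain below $C$ is not by itself a contradiction: the Wadge quasi-order on Borel subsets of $\RR$ has linearly ordered subsets of size $2^\omega$ (Theorem \ref{cardinal characteristics}), and long chains can perfectly well be bounded, so you need a genuinely different obstruction than ``$C$ bounds an $\omega_1$-chain''. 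Second, your proposed obstruction -- a pigeonhole on the ``countably many levels'' of a ${\bf \Sigma}^0_\alpha$ description of $C$ -- is not substantiated and is likely unrepairable in that form: a single $F_\sigma$ set can realize continuum many distinct local accumulation patterns in different locations, so the countability of a Borel code does not bound which patterns occur along images of reductions.

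The paper's actual argument makes no use of the descriptive complexity of the target at all; it proves the stronger statement that for an \emph{arbitrary} subset $B$ of $\RR$ there are only countably many $a\subseteq\omega$ with $B_a\leq B$. The countability comes from the geometry of $\RR$: taking $L$ to be $\omega^2$ with its reverse order and $A_b$ to be $L$-structured with the compact piece $C^b_i$ nonempty iff the corresponding $n$ lies in $b$, Lemma \ref{images of L-structured sets} shows each image $f_b(A_b)$ is again $L$-structured by $f_b\circ\mapone$; among uncountably many such images, the (genuine, nondegenerate) images of the first half-open interval form an uncountable family of intervals, two of which must overlap and hence coincide; the rigidity of $\omega^2$ reversed (every isomorphism onto a convex subset fixing the first element is the identity) then forces the two structuring maps, and hence the coded sets $a$ and $b$, to be equal. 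This overlap-of-first-intervals pigeonhole plus the rigidity of the chosen linear order is the missing key lemma in your proposal; note also that the paper deliberately switches from $\ZZ^{(\omega)}$ (used in Theorem \ref{embed Pomega mod fin}) to $\omega^2$ reversed here precisely to get that rigidity, so reusing the chain from Parovi\v{c}enko's theorem verbatim would require an additional argument even if you had the pigeonhole step.
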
 

To study the Wadge quasi-order below $\QQ$, we introduce the following condition. It characterizes $F_\sigma$ sets that are reducible to $\QQ$. 

\begin{Def} (Definition \ref{definition: condition I}) 
A subset of the real line satisfies the condition ($\text{I}_0$) if it contains the end points of any interval that it contains as a subset. Also, it satisfies ($\text{I}_1$) if its complement satisfies  ($\text{I}_0$), and ($\text{I}$) if both ($\text{I}_0$) and ($\text{I}_1$) hold. 
\end{Def}

In the next characterization, 
\emph{non-trivial} means that both the set and its complement are nonempty. 

\begin{Thm} (Theorem \ref{lem:red_Q_I}) 
A non-trivial $F_\sigma$ subset of $\RR$ satisfies $(\text{I})$ if and only if it is reducible to $\QQ$. 
\end{Thm}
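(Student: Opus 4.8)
The plan is to prove the two implications separately: the direction "reducible to $\QQ$ $\Rightarrow$ $(\mathrm{I})$" is soft and follows from connectedness, while the converse is the substantive construction.

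For the forward direction, suppose $f\colon\RR\to\RR$ is continuous with $A=f^{-1}(\QQ)$. First, $A$ is automatically $F_\sigma$, since $\QQ$ is $F_\sigma$ and a continuous preimage of an $F_\sigma$ set is $F_\sigma$. To check $(\mathrm{I}_0)$, suppose an open interval $(a,b)$ is contained in $A$. Then $f[(a,b)]$ is a connected subset of $\QQ$, hence a single rational $q$, so by continuity $f(a)=f(b)=q\in\QQ$ and therefore $a,b\in A$. The argument for $(\mathrm{I}_1)$ is symmetric: if $(a,b)$ is disjoint from $A$, then $f[(a,b)]$ is a connected subset of the irrationals $\RR\setminus\QQ$, hence a single irrational, and continuity forces $f(a),f(b)\notin A$. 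Thus $(\mathrm{I})$ holds, and non-triviality plays no role here.

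For the converse I would construct a witnessing continuous $f$ by hand. Write $A=\bigcup_n F_n$ with $F_n$ closed and increasing, and decompose $\RR=U\sqcup W\sqcup N$, where $U=\operatorname{int}(A)$, $W=\operatorname{int}(\RR\setminus A)$, and $N=\partial A$. On each component interval of $U$ I set $f$ equal to a constant rational, and on each component interval of $W$ a constant irrational. The point of condition $(\mathrm{I})$ is exactly that no component of $U$ and no component of $W$ can share an endpoint: if they did, the rational boundary value forced by $(\mathrm{I}_0)$ and the irrational boundary value forced by $(\mathrm{I}_1)$ would have to coincide at that point, which is impossible. Hence the rigid ``solid rational'' and ``solid irrational'' regions are never directly adjacent, and wherever their anchor intervals accumulate one another the irrational constants can be chosen to converge to the appropriate rational limit, keeping $f$ continuous.

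The core of the proof is the definition of $f$ on $N$, and this is where I expect the main difficulty to lie. The relative trace $A\cap N$ is again $F_\sigma$ but may be uncountable (for instance $A=\QQ\cup C$ with $C$ a Cantor set yields $N=\RR$ and $A\cap N$ uncountable), so no homeomorphism can carry it onto $\QQ$ and a genuinely non-injective collapse is required. I would build $f$ on $N$ recursively along the filtration $(F_n)$: on the closed nowhere dense pieces $f$ is driven to rational values using the distance functions $\operatorname{dist}(\cdot,F_n)$, which simultaneously supply the modulus of continuity, while on the complementary gaps I fix, via the Cantor back-and-forth theorem, an order isomorphism between the rational points of the gap and $\QQ$, extend it to a homeomorphism onto an interval, and scale it by a rational factor tending to $0$ so that the resulting bumps glue continuously at accumulation points. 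The $F_\sigma$ hypothesis is what makes this recursion converge. The hard part is to guarantee, at one stroke, both global continuity across the (possibly intricate) accumulation structure of $N$ and the exact identity $f^{-1}(\QQ)=A$ on the uncountable meager part of $A$, where $f$ must take rational values on a dense meager set and irrational values on its comeager complement while remaining continuous; the rational scalings and the back-and-forth isomorphisms are the devices that reconcile these two requirements.
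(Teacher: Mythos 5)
Your forward direction is correct and is essentially the paper's (the paper just says that $(\text{I})$ is preserved under continuous preimages; your connectedness argument is the same fact spelled out). The problem is the converse, which is where the entire content of the theorem lives, and there your proposal stops at the point where the proof actually begins. You correctly isolate the difficulty -- defining $f$ on $\partial A$ so that $f^{-1}(\QQ)=A$ exactly while keeping $f$ continuous -- but the devices you offer do not resolve it. First, you work with an increasing filtration $A=\bigcup_n F_n$; the paper's proof hinges on a prior, nontrivial structure theorem (its Theorem on decomposing $F_\sigma$ sets satisfying $(\text{I}_0)$) stating that under $(\text{I}_0)$ the set $A$ can be rewritten as a union of \emph{pairwise disjoint} closed sets. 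That lemma in turn rests on a Sierpi\'nski-type fact (an interval covered by countably many disjoint closed sets lies in one of them) and on transporting the problem to a copy of the Baire space, where disjointification of closed covers is possible. Without disjointness, assigning ``a rational value per closed piece'' is not even well posed, and your suggestion of driving $f$ to rational values via $\operatorname{dist}(\cdot,F_n)$ cannot deliver the exact identity $f^{-1}(\QQ)=A$: sums or combinations of distance functions vanish on the $F_n$ but there is no mechanism forcing their values to be rational precisely on $A$ and irrational precisely off $A$.

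Second, even granting a disjoint decomposition, the genuinely hard step is the one you defer with ``the rational scalings and the back-and-forth isomorphisms are the devices that reconcile these two requirements.'' In the paper this reconciliation is a careful greedy construction: closed pieces of $A$ are sent to constants from an enumeration of $\QQ$, components of the complement to constants from a dense set of irrationals, always choosing the \emph{least-indexed} admissible target inside a controlled interval, together with a bookkeeping system of shrinking intervals and an analysis of ``fresh'' and ``nested'' sequences to prove (i) that the union of the partial maps has a unique continuous extension and (ii) that no point outside $A$ can be sent to a rational (the least-index choice is what guarantees that any rational lying between two one-sided limits would already have been used, yielding a contradiction). None of this is present or replaced in your proposal, so as it stands the converse implication is asserted rather than proved. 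To repair it you would need to first establish the disjoint closed decomposition under $(\text{I}_0)$ and then carry out (and verify) a convergence argument of the above kind on the boundary.
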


This is used to prove the next main result. There we construct a minimal set below $\QQ$ and 
prove that it is uniquely determined up to Wadge equivalence.

\begin{Thm} (Theorem \ref{minimal set}) 
There is a minimal set that is reducible to $\QQ$. 
\end{Thm}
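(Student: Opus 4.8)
The plan is to produce a single set $M_0$ that is the $\le$-minimum among all non-open, non-closed subsets of $\RR$ that are reducible to $\QQ$; minimality and uniqueness up to Wadge equivalence then follow formally. I would take $M_0$ to be trivial (say, empty) off a small interval and, inside that interval, a dense and co-dense set of the lowest possible complexity accumulating at a single point. One then checks directly that $M_0$ is neither open nor closed and that it satisfies condition $(\mathrm{I})$, so that $M_0\le\QQ$ by the characterization of $F_\sigma$ sets reducible to $\QQ$.

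The first substantial step is a structural lemma: every non-open, non-closed $B$ with $B\le\QQ$ is dense and co-dense in some open interval. Indeed, such a $B$ is $F_\sigma$ (being a continuous preimage of the $F_\sigma$ set $\QQ$) and satisfies $(\mathrm{I})$. Since $B$ is not closed, fix $y\in\overline{B}\setminus B$; then $B$ accumulates at $y$. If $B$ approached $y$ through separated points or closed intervals, the gaps between consecutive pieces would be open intervals contained in $\RR\setminus B$ whose endpoints lie in $B$, contradicting $(\mathrm{I}_1)$; hence $B$ is dense on one side of $y$. Dually, $(\mathrm{I}_0)$ forbids $B$ from containing an interval with $y$ as an endpoint, so $\RR\setminus B$ is dense there as well. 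Thus $B$ is dense and co-dense in a one-sided neighbourhood $J$ of $y$.

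The crux is then to show that $M_0\le B$ for every set $B$ that is dense and co-dense in some open interval $J$ (and $F_\sigma$). I would build a continuous $g\colon\RR\to\RR$ with $g^{-1}(B)=M_0$ and $g[\RR]\subseteq\overline{J}$: on the region where $M_0$ is trivial, $g$ is a constant chosen in $B\cap J$ or in $(\RR\setminus B)\cap J$ as appropriate, while near the accumulation point of $M_0$ the map $g$ is assembled as a uniform limit of piecewise-linear maps, using the density of both $B$ and $\RR\setminus B$ in $J$ to send the points of $M_0$ into $B$ and the points of its complement into $\RR\setminus B$. The delicate point, and the main obstacle, is to arrange the limit so that $g^{-1}(B)$ is \emph{exactly} $M_0$: because the $F_\sigma$ stratification of $B$ is arbitrary, one must control, via the intermediate value theorem, that the $g$-images of the complementary points do not accidentally fall into $B$ in the limit. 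This is precisely where the minimal-complexity choice of $M_0$ is used, so that only finitely many such constraints are active near each point and the recursion can be carried out.

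Granting this structural lemma and the crux, $M_0\le B$ holds for every non-open, non-closed $B$ reducible to $\QQ$, so $M_0$ is the minimum of this collection. In particular $M_0$ is minimal, and if $M_0'$ is any other minimal set reducible to $\QQ$ then $M_0\le M_0'$ with $M_0$ non-trivial, whence $M_0'\le M_0$ by minimality of $M_0'$; thus $M_0$ and $M_0'$ are Wadge equivalent. This yields both the existence of a minimal set below $\QQ$ and its uniqueness up to Wadge equivalence.
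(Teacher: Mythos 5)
Your overall logical frame (build a minimum $M_0$ of the class of non-trivial sets reducible to $\QQ$, then deduce minimality and uniqueness formally) is sound, but both pillars of the argument fail. The structural lemma is false: a non-open, non-closed set $B\leq\QQ$ need \emph{not} be dense and co-dense in any open interval. Condition $(\text{I}_1)$ only forces the endpoints of maximal intervals of $\RR\setminus B$ to lie outside $B$; your dichotomy ``$B$ approaches $y$ through separated consecutive pieces, or $B$ is dense near $y$'' omits the Cantor-like configuration in which between any two pieces of $B$ there are both further pieces of $B$ \emph{and} further gaps, so that no two pieces are consecutive and every maximal gap has endpoints that are limits of $B$ but not in $B$. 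Such sets satisfy $(\text{I})$ while being nowhere dense, and the set constructed in Section \ref{section minimal set} of the paper (a union of compacta with the approachability and no-overlap conditions) is exactly of this kind: it is a non-trivial nowhere dense $F_\sigma$ set satisfying $(\text{I})$, hence reducible to $\QQ$ by Theorem \ref{lem:red_Q_I}, and it is neither open nor closed.

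This kills the candidate $M_0$ as well. A set that is dense and co-dense in an interval $J$ cannot be reduced to any nowhere dense set $A$: a reduction $f$ cannot be constant on $J$, so $f(J)$ is a non-degenerate interval, which contains an open set $U$ disjoint from $A$ and meeting $f(J)$; then $f^{-1}(U)\cap J$ is a nonempty open subset of $J$ disjoint from $M_0$, contradicting density. So $M_0\not\leq A$ for the paper's $A$, and in fact $A<M_0$ (your $M_0$ is essentially $\QQ$ localized to an interval), so $M_0$ is not even minimal. (If instead you intended $M_0$ to accumulate at only one point, e.g.\ a convergent sequence, then $M_0$ is $\mathbf{\Delta}^0_2$ and fails $(\text{I})$ by Lemma \ref{non delta02}, hence is not reducible to $\QQ$ at all.) The genuine difficulty, which your sketch defers to ``the crux'' but which cannot be carried out for a dense co-dense $M_0$, is addressed in the paper by taking $M_0$ to be a carefully nested countable union of Cantor-like compacta and proving Lemmas \ref{lem:red_closed} and \ref{prop:red_fsigma}: first reduce the Cantor-like compactum to an arbitrary nonempty compact set, then bootstrap, via the decomposition of any $F_\sigma$ set with $(\text{I}_0)$ into disjoint closed pieces (Theorem \ref{prop:dec_fsigma_I}), to a reduction of $M_0$ to an arbitrary non-trivial $F_\sigma$ set satisfying $(\text{I})$.
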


The structure of the poset $\mathcal{P}(\omega)/\mathsf{fin}$ has been intensively studied, for instance with regard to  
the existence of gaps and values of cardinal characteristics (see \cite{Scheepers_Gaps, MR2768685}). 
Inspired by these results, J\"org Brendle asked whether there are gaps in the Wadge quasi-order for the real line. 
This question is answered in the next theorem. 

\begin{Thm} (Theorem \ref{gaps}) 
For all cardinals $\kappa, \lambda\geq 1$ such that there is a $(\kappa,\lambda)$-gap in $\mathcal{P}(\omega)/\mathsf{fin}$, 
there is a $(\kappa,\lambda)$-gap in the Wadge quasi-order on the Borel subsets of the real line. 
\end{Thm}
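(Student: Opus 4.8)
The plan is to transport a given gap in $\mathcal{P}(\omega)/\mathsf{fin}$ along the embedding from Theorem \ref{embed Pomega mod fin} and then to verify that the transported configuration is genuinely unfilled. Write $\Phi$ for the order-embedding of $\mathcal{P}(\omega)/\mathsf{fin}$ into the Wadge quasi-order supplied by that theorem, whose range consists of $D_2({\bf\Sigma}^0_1)$ subsets of $\RR$, so that $A\subseteq^* B$ if and only if $\Phi(A)\leW\Phi(B)$. Fix a $(\kappa,\lambda)$-gap $(\langle A_\alpha:\alpha<\kappa\rangle,\langle B_\beta:\beta<\lambda\rangle)$, i.e.\ a $\subseteq^*$-increasing sequence below a $\subseteq^*$-decreasing sequence with $A_\alpha\subseteq^* B_\beta$ for all $\alpha,\beta$ and with no single $X\subseteq\omega$ satisfying $A_\alpha\subseteq^* X\subseteq^* B_\beta$ for all $\alpha,\beta$. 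Since $\Phi$ both preserves and reflects $\leW$, the images $\langle\Phi(A_\alpha):\alpha<\kappa\rangle$ and $\langle\Phi(B_\beta):\beta<\lambda\rangle$ form a pre-gap in the Wadge quasi-order on the Borel sets: the first is $\ltW$-increasing, the second is $\ltW$-decreasing, and $\Phi(A_\alpha)\ltW\Phi(B_\beta)$ throughout. This part is immediate and requires no further argument.

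The real content is that this pre-gap cannot be filled, so suppose toward a contradiction that a Borel set $C$ satisfies $\Phi(A_\alpha)\leW C\leW\Phi(B_\beta)$ for all $\alpha,\beta$; the aim is to extract from $C$ a single $X\subseteq\omega$ interpolating the original gap, a contradiction. Note first that $C\leW\Phi(B_0)$ forces $C$ into $D_2({\bf\Sigma}^0_1)$, since this class is closed under continuous preimages, so $C$ carries no more complexity than the sets in the range of $\Phi$. The tempting shortcut of setting $X=\{n:R_n\leW C\}$ for fixed probe sets $R_n$ detecting the $n$-th coordinate \emph{cannot} work in this form: because $\Phi$ factors through $\mathcal{P}(\omega)/\mathsf{fin}$, the relation $\Phi(F)\leW\Phi(A)$ depends only on the class of $A$ modulo finite, and hence no reduction to sets in the range of $\Phi$ can recover the membership of an individual integer. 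The finite information distinguishing $n\in X$ from $n\notin X$ must instead be read off from the actual point-set $C$ together with fixed continuous witnesses of the reductions $C=f_\beta^{-1}(\Phi(B_\beta))$ and $\Phi(A_\alpha)=g_\alpha^{-1}(C)$.

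Concretely, I would reopen the construction behind Theorem \ref{embed Pomega mod fin} and use its coordinate (block, or marker-point) decomposition of the sets $\Phi(A)$: let $X$ be the set of coordinates along which $C$ exhibits the positive local pattern, located through the witness $f_0$. The inclusion $A_\alpha\subseteq^* X$ should then follow by considering $f_0\circ g_\alpha$, which witnesses $\Phi(A_\alpha)=(f_0\circ g_\alpha)^{-1}(\Phi(B_0))$ and hence pulls the active blocks of $\Phi(A_\alpha)$, namely the indices in $A_\alpha$, into positive blocks of $C$; dually, $X\subseteq^* B_\beta$ should follow from $f_\beta$. The crux, and the step I expect to be the main obstacle, is a \textbf{rigidity lemma}: every continuous reduction among the sets occurring here respects the coordinate decomposition up to finitely many coordinates, so that the read-off $X$ is well defined modulo finite and both inclusions hold with only finitely many exceptions. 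This is precisely the phenomenon that makes the range of $\Phi$ a faithful copy of $\mathcal{P}(\omega)/\mathsf{fin}$, and the needed statement should be an elaboration of the analysis in the proof of Theorem \ref{embed Pomega mod fin}, now applied not to two sets in the range of $\Phi$ but to an arbitrary Borel interpolant $C$ sandwiched between them; alternatively, one might instead prove that every Borel set in the $\leW$-interval $(\Phi(A_0),\Phi(B_0))$ is Wadge equivalent to some $\Phi(X)$, which would yield the interpolant directly. Once block-preservation is in hand, the contradiction with the gap hypothesis is immediate.
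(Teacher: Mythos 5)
Your overall strategy coincides with the paper's: push the gap through the embedding of Theorem \ref{embed Pomega mod fin}, note that the pre-gap structure is automatic, and derive a contradiction by extracting an interpolating subset of $\omega$ from any hypothetical Borel set $C$ lying between the images. You also correctly diagnose that a naive probe-set definition of $X$ cannot work and that the content lies in a rigidity statement. However, as written the proposal defers exactly that content to an unproven expectation, and this is where the entire mathematical work of the theorem sits. The missing argument, which the paper supplies, has three concrete parts. First, one must show that any reduction $f_0$ of $\Phi(A_{a_0})=A_{a_0^\star}$ \emph{into} $C$ is surjective: this is done by composing with a reduction $g$ of $C$ to some $A_{b^\star}$ ($b$ in the upper half of the gap), applying Lemma \ref{images of L-structured sets} to see that $g\circ f_0\circ\mapone$ structures the image by a copy of $L=\ZZ^{(\omega)}$, and using that every isomorphism of $\ZZ^{(\omega)}$ onto a convex suborder is onto together with the cofinality and coinitiality of $\ran(\mapone)$. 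Surjectivity is what guarantees that $C$ equals the image $f_0(A_{a_0^\star})$ and is therefore itself $L$-structured by $f_0\circ\mapone$, witnessed by compact blocks $\langle C_i\mid i\in L\rangle$; without it your ``positive local pattern of $C$'' is not even defined on all of $C$. Note also that the $L$-structure is transported by \emph{pushing forward} along the reduction into $C$ (your $g_0$), not by pulling back along the map $f_0\colon C\to\Phi(B_0)$ as your phrasing suggests, since Lemma \ref{images of L-structured sets} is a statement about images.

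Second, the interpolant is read off as $n\in c$ iff $C_{s^\frown 1^\frown 0^\omega}$ is a singleton for some $s$ of length $n$, and third, the two inclusions $a\subseteq_{\mathsf{fin}}c$ and $c\subseteq_{\mathsf{fin}}b$ are verified using the fact that any automorphism of $\ZZ^{(\omega)}$ eventually acts as the identity on tails (it ``fixes'' some $n_0$), which is what makes the read-off well defined modulo finite. This is precisely your conjectured rigidity lemma, and it is an assembly of Lemma \ref{images of L-structured sets}, Lemma \ref{lem:discrete_emb} and the automorphism analysis of $\ZZ^{(\omega)}$ already present in Section \ref{section embedding posets}, so the gap is fillable along the lines you indicate; but a referee would not accept the proposal in its current form, since the theorem is exactly as hard as the lemma you postpone. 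Your alternative suggestion, that every Borel set in the interval $(\Phi(A_0),\Phi(B_0))$ is Wadge equivalent to some $\Phi(X)$, is stronger than what is needed and is not obviously true: the compact blocks of an $L$-structured interpolant need not be singletons or intervals, so $C$ need not be equivalent to any set in the range of $\Phi$, and the paper deliberately avoids this claim. Finally, the observation that $C\in D_2({\bf\Sigma}^0_1)$ is correct but plays no role in the argument.
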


In the following result, 
let $\ell$ and $\ell^\star$ denote the suprema of the sizes of well-ordered and reverse well-ordered subsets, respectively.
Answering a question of Stefan Geschke, we determine several cardinal characterstics of the Wadge quasi-order for the real line. 

\begin{Thm} (Theorem \ref{cardinal characteristics}) 
The least size of an unbounded family is $\omega_1$, while the least size of a dominating family and the maximal size of a linearly ordered subset equal $2^\omega$. Moreover, $\ell=\ell^\star$ is consistent with each of the following statements: (a) $\ell=\omega_1$ (b) $\ell=2^\omega$ and (c) $\omega_1<\ell<2^\omega$ and $2^\omega$ is arbitrarily large. 
\end{Thm} 

This paper is organized as follows. Section 2 contains some notations that we use throughout the paper. In Section 3, we first construct an embedding of $\mathcal{P}(\omega)/\mathsf{fin}$ and then extend this result to obtain an embedding with targets below $\QQ$. 
In Section 4, we use the proof of the previous result to prove that no ${\bf \Sigma}^0_\alpha$-complete sets exists for any countable ordinal $\alpha\geq 2$. 
In Section 5, we then characterize $F_\sigma$ sets that are reducible to $\QQ$ by the condition (I) and in Section 6, we construct a minimal  set below $\QQ$. 
Section 7 contains some results about gaps and cardinal characteristics. Finally, we show in Section 8 that the structure of the quasi-order becomes much simpler if the class of reductions is enlarged to finite compositions of right-continuous functions.

\section{Notation} 

In this section, we collect some notation that is used throughout the paper. 



\subsection{Sets of reals } 
We will use the following standard notations for subsets of $\RR$. 
We write $\inf_A$, $\sup_A$, $\min_A$, $\max_A$ for the infimum, supremum, minimum and maximum of a set $A$ whenever they are defined. 
An \emph{end point} of a set of reals $A$ is an element of $A$ that is an end point of an open interval that is disjoint from $A$. 
Let $\mathrm{cl}(A)$ denote the \emph{closure}, $\partial A$ the boundary and $\conv(A)$ the \emph{convex hull} of $A$. 
We will further use the following sets in the \emph{difference hierarchy}. 
A set of reals if called 
$D_2({\bf \Sigma}^0_1)$ if it is equal to the intersection of an open and a closed set. 
Moreover, the class of complements of these sets is called $\check{D}_2({\bf \Sigma}^0_1)$. 
The \emph{Baire space} ${}^\omega\omega$ is the set of functions $f\colon \omega\rightarrow \omega$ equipped with the standard topology that is given by the basic open sets $N_t= \{x\in {}^\omega\omega\mid t\subseteq x\}$ for all $t\in {}^{<\omega}\omega$. 
Moreover, any enumeration of a set is assumed to be injective.

We further introduce the following terminology. 
If a subset of $\RR$ and its complement are nonempty, then the set is called \emph{non-trivial}. 
Moreover, a connected component of a set is called \emph{non-trivial} if is is not a singleton.
We call a real number $x$ \emph{approachable} with respect to a set $A$ if it is a limit of both $A$ and its complement from the same side. Moreover, a set is \emph{approachable} with respect to $A$ if this is the case for one of its elements. 
A \emph{component} of a set is a relatively convex, relatively closed and relatively open subset. 
We write $[x,y)^{\arrowup}=[x,y)$ if $x<y$ and $[x,y)^{\arrowup}=(y,x]$ if $y<x$; the definition is analogous for other types of intervals. 
We further say that $A$ and $B$ are homeomorphic if there is an auto-homeomorphism $f$ of $\RR$ with $f(A)=B$. 
Finally, let $\QQ_2=\{\frac{m}{2^n}\mid m,n \in\ZZ,\ n\geq 1\}$ denote the set of dyadic rationals.


\subsection{Functions on the real line} 
We use the following general terminology. 
A function $f$ on $\RR$ is called \emph{increasing} if $f(x)\leq f(y)$ and \emph{decreasing} if $f(y)\leq f(x)$ for all $x\leq y$. Moreover, $f$ is \emph{strictly increasing} or \emph{strictly decreasing} if this holds for strict inequalities. 
We further call a function \emph{monotone} it it is increasing or decreasing and \emph{strictly monotone} if it is strictly increasing or strictly decreasing. 
Moreover, we say that a sequence $\vec{x}=\langle x_i\mid i\in\omega\rangle$ converges to a real number $x$ \emph{from below} or \emph{above} if its elements are eventually strictly below or above $x$. 

We further fix the following notations for the following specific functions that we will use. 
We will write $c_x$ for the constant map on $\RR$ with value $x$. 
The \emph{Cantor subset} $A_c$ \cite[Exercise 3.4]{Kechris_Classical} of the unit interval consists of all values $\sum_{n= 1}^\infty \frac{ i_n}{3^{n}}$, where each $i_n$ is equal to either $0$ or $2$. 
The \emph{Cantor function} $f_c$ is the unique increasing extension to the unit interval of the function that is defined on $A_c$ by letting $$f_c ( \sum_{n= 1}^\infty \frac{ i_n}{3^{n}} ) = \sum_{n= 1}^\infty \frac{i_n}{2^{n}}.$$ 
We further extend this to a monotone function on $\RR$ by letting $f_c(x+n)=f_c(x)+n$ for all $x\in[0,1]$ and $n\in\ZZ$. 

\subsection{Reducibility}
A \emph{quasi-order} is a transitive reflexive relation. 
The Wadge quasi-order for subset of $\RR$ is defined as follows. 
If $A$ and $B$ are subsets of $\RR$, we say that $A$ is \emph{reducible} to $B$ if there is a continuous function $f\colon \RR\rightarrow \RR$ with $x\in A$ if and only if $f(x)\in B$ for all $x\in \RR$. 
We further write $A\leq B$ for this notion and $A<B$ for \emph{strict reducibility} in the sense that $A\leq B$ but not conversely. 
We finally say that these sets are \emph{Wadge equivalent} if each is reducible to the other set. 
It is then clear that the relation $\leq $ is a quasi-order on the class of subsets of $\RR$. 
A non-trivial set $A$ is called \emph{minimal} if it is minimal with respect to reducibility among all non-trivial sets. 
Finally, the \emph{semi-linear ordering principle} $\mathsf{SLO}$ for a relation $\preceq$ on a class of subsets of an ambient set $X$ states that for all sets $A$ and $B$ in this class, $A\preceq B$ or $B\preceq X\setminus A$. 

\subsection{Functions on quasi-orders and linear orders}

A \emph{reduction} between quasi-orders is a function $f$ with the property that $x\leq y$ if and only if $f(x)\leq f(y)$ for all $x$ and $y$. 
If $X$ and $Y$ are nonempty subsets of a quasi-order, then the pair $(X,Y)$ is a \emph{gap} 
if $x< y$ for all $x\in X$ and $y\in Y$ and there is no $z$ 
with $x< z< y$ for all $x\in X$ and $y\in Y$. 
A \emph{linear order} is an antisymmetric linear quasi-order. 
We will identify any linear order $(L,\leq)$ with its underlying set $L$. 
It is called \emph{scattered} if it does not have a suborder that is isomorphic to the rationals. 
A subset $A$ of $L$ is \emph{convex} if any element of $L$ between elements of $A$ is also an element of $A$. 
We will further consider the linear order $L^{\otimes n}$ that denotes the product of 
$L$ with a set of $n$ elements and can be identified with the linear order that is obtained by replacing every element of $L$ by $n$ elements. 
Moreover, we will denote the lexicographic order on ${}^\omega\omega$ by $\leq_{\mathrm{lex}}$.

All functions with domain $L$ are assumed to be order preserving or order reversing and in particular injective. 
Such a function $\mapone\colon L\rightarrow\mathbb{R}$ is called \emph{discrete} if $\sup_{\mapone(A)}<\inf_{\mapone(B)}$ for all gaps $(A,B)$ in $L$. 
In particular, we will consider functions $\mapone\colon L^{\otimes n}\rightarrow \RR$ 
and will write $\xi_k(i)$ for $\xi(i,k)$, which denotes the $k$-th element of the $i$-th copy of $n$ for any $k<n$ in the above identification. 
Moreover, we define a successor of $\mapone_1(i)$ by letting $\mapone_1^+(i)=\inf_{j>i}\mapone_0(j)$ if $\mapone$ is order preserving and $\mapone_1^+(i)=\sup_{j>i}\mapone_0(j)$ if $\mapone$ is order reversing, where $\inf(\emptyset)=\infty$ and $\sup(\emptyset)=-\infty$. 
For notational convenience, we will further identify $L$ with the subset of $L^{\otimes n}$ that is the product with the first of $n$ elements and can thus write $\mapone(L)$ for its image.


\section{Embedding posets into the Wadge quasi-order} \label{section embedding posets} 


In this section, we construct a reduction of the quasi-order $\mathcal{P}(\omega)/\mathsf{fin}$ of inclusion up to finite error $\subseteq_{\mathsf{fin}}$ on sets of natural numbers to the Wadge quasi-order and thereby generalize \cite[Theorem 5.1.2]{Ikegami_PhD}. 
It is preferable to work with this quasi-order instead of its quotient poset in this setting, since the former allows the construction of definable reductions, while for the latter this would necessitate the use of a definable selector. 
In the next definition, we consider certain unions of intervals -- any continuous reduction will induce certain symmetries in the sequence of intervals and thus the choices of half-open and closed intervals prevent certain reductions. 
From now on, all reductions are assumed to be continuous and $\xi\colon L\rightarrow \RR$ will denote a discrete function on a linear order.

\begin{Def} \label{definition - L-structured}
A subset of $\RR$ is \emph{$L$-structured} by $\mapone\colon L^{\otimes n}\rightarrow\RR$ if it is equal to $\bigcup_{i\in L} A_i$, where $n\geq2$ and each $A_i$ is the union of $[ \mapone_0(i),\mapone_1(i))^{\arrowup}$ with a compact subset $C_i$ of $(\mapone_1(i),\mapone_1^+(i))^{\arrowup}$. 
\end{Def} 

It is possible that there are many $L$-structuring maps $\mapone$ for the same set $A$, but the range of any such map is uniquely determined by the half-open sub-intervals of $A$. 
Moreover, it is important that this structure is preserved under images of reductions. 


\begin{Lem} \label{images of L-structured sets} 
The class of $L$-structured subsets of $\RR$ is stable under images with respect to reductions on $\RR$. 
Moreover, if $f$ reduces $A$ to $B$ and $A$ is $L$-structured by $\mapone$, then $f(A)$ is $L$-structured by $f\circ\mapone$. 
\end{Lem}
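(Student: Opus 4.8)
The plan is to prove the stronger ``moreover'' statement, since stability of the class under images follows from it by forgetting the witnessing map. So fix a reduction $f$ of $A$ to $B$, i.e. a continuous $f\colon\RR\to\RR$ with $A=f^{-1}(B)$, and suppose $A=\bigcup_{i\in L}A_i$ is $L$-structured by $\mapone$, with $A_i=[\mapone_0(i),\mapone_1(i))^{\arrowup}\cup C_i$ and $C_i$ compact in $(\mapone_1(i),\mapone_1^+(i))^{\arrowup}$. The engine is the separation property that comes for free from $A=f^{-1}(B)$: since $f(A)\subseteq B$ and $f(\RR\setminus A)\subseteq\RR\setminus B$, the images $f(A)$ and $f(\RR\setminus A)$ are disjoint, so any point that is a limit of $f$-images of points of $\RR\setminus A$ lies outside the interior of $f(A)$. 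First I would record, writing $I_i:=[\mapone_0(i),\mapone_1(i))^{\arrowup}$, that each closed endpoint $\mapone_0(i)$ carries an adjacent open gap of $\RR\setminus A$ on the side away from $I_i$, while each $\mapone_1(i)$ lies outside $A$, with $I_i$ limiting onto it from one side and the open gap $(\mapone_1(i),\inf C_i)^{\arrowup}$ on the other (here discreteness of $\mapone$ bounds the blocks $A_{i'}$ with $i'<i$ away from $\mapone_0(i)$, while compactness keeps each $C_{i'}$ off the point $\mapone_0(i)$ itself).

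Next I would compute the image of a single block. Since $I_i$ is connected and $f$ is continuous, $f(I_i)$ is an interval contained in $f(A)\subseteq B$. As $\mapone_1(i)\notin A$ we have $f(\mapone_1(i))\notin B\supseteq f(A)$, yet by continuity $f(\mapone_1(i))$ is a limit of $f(I_i)$; hence it is an endpoint of this interval that is not attained, i.e. the open endpoint. For the other endpoint, the adjacent gap on the far side of $\mapone_0(i)$ maps into $\RR\setminus B$, so $f(\mapone_0(i))$ is approached by points outside $f(A)$ and therefore cannot be interior to $f(I_i)\subseteq f(A)$; being itself a value of $f$ on $I_i$, it must be the attained endpoint. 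Together these give $f(I_i)=[f(\mapone_0(i)),f(\mapone_1(i)))^{\arrowup}=[(f\circ\mapone)_0(i),(f\circ\mapone)_1(i))^{\arrowup}$, with distinct endpoints. Finally $f(C_i)$ is compact because $C_i$ is, so each block already has the shape required of an $L$-structured set, with candidate compact part $D_i=f(C_i)$.

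The main obstacle is the global coherence: I must show that $f\circ\mapone$ is injective and either order preserving or order reversing, that it is discrete, and that each compact part lands in the correct gap $((f\circ\mapone)_1(i),(f\circ\mapone)_1^+(i))^{\arrowup}$. The crucial rigidity is again separation: each open endpoint $f(\mapone_1(i))$ lies outside $f(A)$ while being a limit of $f(I_i)\subseteq f(A)$, which forbids any block $f(I_{i'})$ from containing $f(\mapone_1(i))$ in its interior, and symmetrically the closed endpoints $f(\mapone_0(i))$ are boundary points of $f(A)$, so no two of the intervals $f(I_i)$ can overlap or be nested. From their pairwise disjointness one reads off a single orientation for $f\circ\mapone$ together with injectivity; discreteness then follows by pushing the gaps of $A$ lying between successive blocks through $f$ into $\RR\setminus B$, and the same orientation forces each $f(C_i)\subseteq f(A)$ to lie strictly between $f(\mapone_1(i))$ and $\inf_{j>i}f(\mapone_0(j))$, as required. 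This is where the designed alternation of half-open and closed intervals does its work, and I expect the careful ruling-out of overlaps and reorderings of the blocks to be the most delicate part.

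Assembling the blocks, $f(A)=\bigcup_{i\in L}f(A_i)$ with each $f(A_i)=[(f\circ\mapone)_0(i),(f\circ\mapone)_1(i))^{\arrowup}\cup f(C_i)$ and $f\circ\mapone$ a discrete map on $L^{\otimes n}$, which is exactly the statement that $f(A)$ is $L$-structured by $f\circ\mapone$; the first sentence of the lemma is the special case in which the witnessing map is forgotten.
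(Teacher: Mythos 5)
Your single-block computation is correct and coincides with the paper's opening move: writing $I_i=[\mapone_0(i),\mapone_1(i))^{\arrowup}$, the image $f(I_i)$ is a half-open interval whose attained endpoint $f(\mapone_0(i))$ lies in $\partial B$ and whose non-attained endpoint $f(\mapone_1(i))$ lies outside $B$. The gap is in the global step, which is the actual content of the lemma and which you assert rather than prove. First, the fact that no endpoint of one image block is interior to another does not yield pairwise disjointness: it is compatible with two blocks having \emph{identical} images $f(I_i)=f(I_j)$ (the endpoints then coincide and neither is interior to the other), and your separation facts do not exclude this. Second, and more seriously, even granted pairwise disjointness one cannot ``read off a single orientation'': a pairwise disjoint family of intervals can occur on the line in any permutation of the index order, so nothing said so far prevents, say, $f(I_0)<f(I_2)<f(I_1)$.

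Order preservation, the exclusion of coinciding images, and the placement of each $f(C_i)$ strictly between $f(\mapone_1(i))$ and the subsequent blocks all rest on a mechanism absent from your sketch: apply the intermediate value theorem to the segment $[\mapone_1(i),\mapone_0(j)]^{\arrowup}$ between two blocks, observe that any preimage there of a point in the interior of some image block must lie in $A$, hence in some $C_k$ or some $[\mapone_0(k),\mapone_1(k))^{\arrowup}$ with $i\leq k<j$, and derive a contradiction from an extremal such preimage, using that the image of a connected component of $C_k$ is an interval contained in $B$ whose extreme point must land on $\partial B$. This is precisely what the paper's two Claims carry out in detail, and it is the part you defer as ``the most delicate part'' without supplying an argument; as written, the inference chain from the endpoint facts to the conclusion is broken at exactly the point where the lemma has its content.
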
 
\begin{proof}
Suppose that $A$ is $L$-structured by $\mapone\colon L^{\otimes 2}\rightarrow\RR$, as witnessed by a sequence $\langle C_i\mid i\in L\rangle$ of compact subsets of $(\mapone_1(i),\mapone_1^+(i))^{\arrowup}$. 
Let $i$, $j$ and $k$ denote arbitrary elements of $L$. 
We first claim that $[\mapone_0(i), \mapone_1(i))^{\arrowup}$ is mapped onto $\big[f(\mapone_0(i)), f\bigl(\mapone_1(i)) \big)^{\arrowup}$. 
Since $\mapone_0(i)$ and $\mapone_1(i)$ are elements of $\partial A$, their images lie in the boundary of the interval $f([\mapone_0(i),\mapone_1(i))^{\arrowup})$ -- since $f$ reduces $A$ to $B$, this interval contains $f(\mapone_0(i))$, but not $f(\mapone_1(i))$ and the statement follows. 
To prove the statement of the lemma, we first show that the order and the alignment of the images of the intervals $[\mapone_0(i),\mapone_1(i))^{\arrowup}$ is preserved by the reduction. 

\begin{Claim*} 
If $i<j$, then 
$$f(\mapone_0(i)) \blacktriangleleft f(\mapone_1(i))\blacktriangleleft f(\mapone_0(j)\blacktriangleleft f(\mapone_1(j)),$$ 
where $\blacktriangleleft$ denotes either $<$ or $>$. 
\end{Claim*} 
\begin{proof} 
We choose $\blacktriangleleft$ as either $<$ or $>$ so that $f(\mapone_0(i)) \blacktriangleleft f(\mapone_1(i))$ holds. 
Moreover, we will assume that $\blacktriangleleft$ is equal to $<$ for ease of notation. 

We first show that 
$f(\mapone_1(i))\blacktriangleleft f(\mapone_0(j)$. 
Since $f$ reduces $A$ to $B$, we have $f(\mapone_1(i) ) \neq f (\mapone_0(j))$ and hence we assume $f(\mapone_0(j))\blacktriangleleft f(\mapone_1(i))$ towards a contradiction. 
By our assumption and the fact that $f(\mapone_0(j))$ lies in $\partial B$, we have $f(\mapone_0(j))\leq f(\mapone_0(i))$. 
We now choose some element $x_0$ of $\bigl(f(\mapone_0(i)) , f(\mapone_1(i)) \bigr)^{\arrowup}$. 
Since $f$ is continuous, there is a least element $x$ of $[\mapone_1(i), \mapone_0(j)]^{\arrowup}$ with $f(x) = x_0$. 

We first assume that there is some $k\in L$ with $x\in C_k$. Let $C$ denote the connected component of $x$ in $C_k$ --  since $C_k$ is compact, this is a closed interval or a singleton --  and let $y$ be its least element. 
Thus $f(C)$ is an interval or a singleton and $f(y)\in \partial A$. Since $f(C)$ also contains $x_0$, it is a subset of  $\big[f(\mapone_0(i)), f\bigl(\mapone_1(i)) \big)^{\arrowup}$ and hence $f(y)=f(\mapone_0(i))$. 
By the intermediate value theorem for $\mapone_1(i)$ and $y$, some element  of $(\mapone_1(i),y)$ is mapped to $x_0$, but this contradicts the minimality of $x$, since $y\leq x$. 

We can hence assume that $x\in [\mapone_0(k), \mapone_1(k))^{\arrowup}$ for some $k\in L$ with $i\leq k<j$. Then $ [\mapone_0(k), \mapone_1(k))^{\arrowup}$ is mapped onto $[f(\mapone_0(k)), f(\mapone_1(k))))^{\arrowup}$ --  since $f(x)=x_0$, this interval is equal to $\big[f(\mapone_0(i)), f\bigl(\mapone_1(i)) \big)^{\arrowup}$ and we have $f(\mapone_0(k))=f(\mapone_0(i))$. Now by the intermediate value theorem for $\mapone_1(i)$ and $\mapone_0(k)$, some element  of $(\mapone_1(i),\mapone_0(k))^{\arrowup}$ is mapped to $x_0$, but this contradicts the minimality of $x$.

It remains to show that 
$f(\mapone_0(j))\blacktriangleleft f(\mapone_1(j))$. 
Since $f$ reduces $A$ to $B$, we have $f(\mapone_0(j) ) \neq f (\mapone_1(j))$ and we will thus assume that $f(\mapone_1(j))\blacktriangleleft f(\mapone_0(j))$. 
We choose an arbitrary element $x_0$ of $\bigl(f(\mapone_1(j)) , f(\mapone_0(j)) \bigr)^{\arrowup}$. 
By the intermediate value theorem for $\mapone_1(i)$ and $\mapone_0(j)$, there is a least element $x$ of $[\mapone_1(i), \mapone_0(j)]^{\arrowup}$ with $f(x) = x_0$. 

We first assume that there is some $k\in L$ with $x\in C_k$, let $C$ denote the connected component of $x$ in $C_k$ and $y$ its least element. 
Since the interval $f(C)$ also contains $x_0$, it is a subset of  $\bigl( f(\mapone_1(j)), f(\mapone_0(j))\bigr] ^{\arrowup}$ and hence $f(y)=f(\mapone_0(j))$. 
By the intermediate value theorem for $\mapone_1(i)$ and $y$, some element  of $(\mapone_1(i),y)$ is mapped to $x_0$, but since $y\leq x$ this contradicts the minimality of $x$. 

We finally assume that $x\in [\mapone_0(k), \mapone_1(k))^{\arrowup}$ for some $k\in L$ with $i\leq k<j$. Then $ [\mapone_0(k), \mapone_1(k))^{\arrowup}$ is mapped onto $[f(\mapone_1(j)), f(\mapone_0(j)))^{\arrowup}$ and $f(\mapone_0(k))=f(\mapone_0(j))$. By the intermediate value theorem for $\mapone_1(i)$ and $\mapone_0(k)$, some element  of $(\mapone_1(i),\mapone_0(k))^{\arrowup}$ is mapped to $x_0$, but this contradicts the minimality of $x$. 
\end{proof} 

It remains to show that the image of $A$ is $L$-structured -- 
this is witnessed by the sets $D_i=f(C_i)\cap (f(\mapone_1(i),f(\mapone_1^+(i))^{\arrowup}$ by the next claim. 


\begin{Claim*} 
If $i<j$, then 
$$f(\mapone_0(i))\blacktriangleleft f(\mapone_1(i))\blacktriangleleft f(C_i) \blacktriangleleft f(\mapone_1(j))$$ 
and  if $i$ has no direct successor, 
then $f(C_i)\blacktriangleleft f(\mapone_1^+(i))$, where $\blacktriangleleft$ denotes either $<$ or $>$. 
\end{Claim*} 
\begin{proof} 
We choose $\blacktriangleleft$ as either $<$ or $>$ so that $f(\mapone_0(i)) \blacktriangleleft f(\mapone_1(i))$ holds. Moreover, we will assume that $\blacktriangleleft$ is equal to $<$ for ease of notation. 

We first show that 
$f(\mapone_1(i))\blacktriangleleft f(C_i)$. 
Otherwise let $x$ denote the least element of $C_i$ with $f(x)\leq f(\mapone_1(i))$. Moreover, let $C$ denote the connected component of $x$ in $C_i$ and $y$ its least element -- since $f$ reduces $A$ to $B$, it follows that $f(C)\blacktriangleleft f(\mapone_1(i))$ and moreover $f(y)\leq f(\mapone_0(i))$, since $f(y)\in \partial B$. 
By continuity, some element of $(\mapone_1(i),y)$ is mapped to $(f(\mapone_0(i)),f(\mapone_1(i)))^{\arrowup}$. Since $f$ reduces $A$ to $B$, this would have to be an element of $C_i$, contradicting the minimality of $x$.

We now claim that 
$f(C_i) \blacktriangleleft f(\mapone_1(j))$. 
If this fails, let $x$ denote the least element of $C_i$ with $f(\mapone_1(j))\leq f(x)$ -- since $f$ reduces $A$ to $B$, it follows that $f(\mapone_1(j))\blacktriangleleft f(x)$. We choose an arbitrary $y_0\in (\mapone_0(j),\mapone_1(j))^{\arrowup}$. 
By the intermediate value theorem for $\mapone_1(i)$ and $x$, some element $y$ of $(\mapone_1(i),x)^{\arrowup}$ is mapped to $y_0$ and we assume that $y$ is maximal. Let $C$ be the connected component of $y$ in $C_i$ and let $z$ be its maximum. Since $f$ reduces $A$ to $B$, it follows that $f(z)= \mapone_0(j)$. 
By the intermediate value theorem for $z$ and $x$, some element of $(z,x)^{\arrowup}$ is mapped to $y_0$. Since $y\leq z$, this contradicts the maximality of $y$.

It remains to show that 
$f(C_i)\blacktriangleleft f(\mapone_1^+(i))$ if $i$ has no direct successor. 
Otherwise let $x$ denote the least element of $C_i$ with $f(\mapone_1^+(i))\leq f(x)$. Since $i$ has no direct successor, it follows from the preservation of the order of the intervals proved in the first claim that there is some $x_0\in B$ with $f(\mapone_1^+(i))\blacktriangleleft x_0\blacktriangleleft f(x)$. By the intermediate value theorem for $\mapone_1^+(i)$ and $x$, some element of $(\mapone_1^+(i),x)^{\arrowup}$ is mapped to $x_0$, contradicting the minimality of $x$. 
\end{proof} 
If $\blacktriangleleft$ is equal to $>$ instead of $<$, we replace $\leq$ with $\geq$ and choose maximal instead minimal elements and conversely, but the rest of the proof remains unchanged. 
Thus $f(A)$ is $L$-structured by $f\circ \mapone$ as witnessed by the sequence $\langle D_i\mid i\in L\rangle$, where  $D_i=f(C_i)\cap (f(\mapone_1(i),f(\mapone_1^+(i))^{\arrowup}$. 
\end{proof}

In the following, we will use the fact that every countable scattered linear order can be discretely embedded into the real line. This can be easily shown by induction on the Cantor-Bendixson rank, using the fact that any such linear order has countable rank by a result of Hausdorff. 
We will use the following sets in the proofs below. 
Suppose that $L$ is a countable scattered linear order and $\mapone \colon L^{{\scriptscriptstyle\otimes}4}\rightarrow \mathbb{R}$ is discrete. 
For any subset $b$ of $L$, we define $A_b$ as the set 
$$ \bigcup\nolimits_{i\in L} [\mapone_0(i),\mapone_1(i))^{\arrowup} \cup [\mapone_2(i),\mapone^b(i)]^{\arrowup},$$ 
where $\mapone^b(i)=\mapone_2(i)$ if $i\in b$ and $\mapone^b(i)=\mapone_3(i)$ otherwise.  
Thus these sets are $L$-structured and by our assumption that $\mapone$ is a discrete embedding, they are $D_2({\bf \Sigma}^0_1)$-sets. 


The next result shows that any continuous reduction between sets of the form $A_b$ induces an isomorphism of $L$ onto a convex suborder of $L$, and conversely. For the statement of the result, we introduce the following notation. If $\sigma \colon L\rightarrow L$ is a function, let $\sigma_\mapone\colon \mapone(L)\rightarrow\mapone(L)$ denote the function $\mapone\circ \sigma\circ \mapone^{-1}$. Moreover, if $f\colon\mathbb{R}\rightarrow\mathbb{R}$ is a function with $f(\mapone(L)) \subseteq \mapone(L)$, let $f^\mapone\colon L\rightarrow L$ denote the function $\mapone^{-1}\circ f\circ \mapone$ (see Figure \ref{figure 1}). 



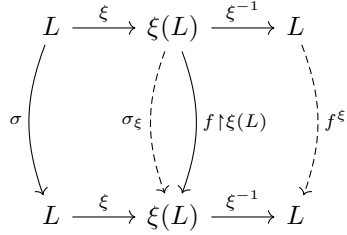
\begin{figure}[h]
\begin{center} 
\begin{tikzcd}
  L\arrow[r, "\mapone"] \arrow[ddd, "\sigma", swap, bend right=20] 
     & \mapone(L) \arrow[r, "\mapone^{-1}"] \arrow[ddd, "\sigma_\mapone", bend right=20, dashed, swap] \arrow[ddd, "f{\upharpoonright}\mapone(L)", bend left=20] 
     & L \arrow[ddd, "f^\mapone", dashed, bend left=20] \\ 
   &  
   & 
    \\ 
      &  \\ 
  L \arrow[r, "\mapone"] 
     & \mapone(L) \arrow[r, "\mapone^{-1}"] 
     & L \\ 
\end{tikzcd}
\end{center} 
\caption{Relationship between reductions and order isomorphisms. } 
\label{figure 1} 
\end{figure}

\begin{Lem} \label{lem:discrete_emb} 
Given a fixed discrete embedding $\mapone\colon L^{\otimes 2}\rightarrow \RR$,  
the following correspondences hold for all subsets $a$ and $b$ of $L$. 
\begin{enumerate-(1)} 
\item \label{lem:discrete_emb 1} 
\begin{enumerate-(a)} 
\item \label{lem:discrete_emb 1a} 
If $\sigma$ is an isomorphism of $L$ onto a convex subset with $\sigma(a)\subseteq b$, then $\sigma_\mapone$ can be extended to an increasing reduction $f$ of $A_a$ to $A_b$. 
\item \label{lem:discrete_emb 1b} 
If $f$ is a reduction of $A_a$ to $A_b$, then $f^\mapone$ is a well-defined isomorphism of $L$ onto a convex subset with $f^\mapone(a)\subseteq b$.  
\end{enumerate-(a)} 
\item \label{lem:discrete_emb 2} 
\begin{enumerate-(a)} 
\item \label{lem:discrete_emb 2a} 
If $\sigma$ is an automorphism of $L$ with $\sigma(a)\subseteq b$, then $\sigma_\mapone$ can be extended to an increasing surjective reduction $f$ of $A_a$ to $A_b$. 
\item \label{lem:discrete_emb 2b} 
If $f$ is a surjective reduction of $A_a$ to $A_b$, then $f^\mapone$ is a well-defined automorphism of $L$ with $f^\mapone(a)\subseteq b$. 
\end{enumerate-(a)} 
\end{enumerate-(1)} 
\end{Lem}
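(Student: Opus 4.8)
The plan is to reduce every part to an analysis of the block structure of $A_a$ and $A_b$ via Lemma \ref{images of L-structured sets}. Recall that each $A_b$ is $L$-structured with half-open components $[\mapone_0(i),\mapone_1(i))^{\arrowup}$ and compact components $[\mapone_2(i),\mapone^b(i)]^{\arrowup}$, the latter being singletons exactly when $i\in b$ and nondegenerate closed intervals otherwise. The first observation, common to \ref{lem:discrete_emb 1b} and \ref{lem:discrete_emb 2b}, is that $f^\mapone$ is well defined with values in $L$. Indeed, if $f$ reduces $A_a$ to $A_b$, then by Lemma \ref{images of L-structured sets} the set $f(A_a)\subseteq A_b$ is $L$-structured by $f\circ\mapone$, so $[f(\mapone_0(i)),f(\mapone_1(i)))^{\arrowup}$ is a half-open component of $f(A_a)$ whose right endpoint $f(\mapone_1(i))$ lies outside $A_b$. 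Since the only components of $A_b$ that omit an endpoint are the half-open ones, this interval must sit inside some $[\mapone_0(j),\mapone_1(j))^{\arrowup}$; as $f(\mapone_0(i))$ and $f(\mapone_1(i))$ lie in $\partial A_b$, this forces $f(\mapone_0(i))=\mapone_0(j)$ and $f(\mapone_1(i))=\mapone_1(j)$, so $f^\mapone(i)=j\in L$ and $f$ extends $(f^\mapone)_\mapone$.

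For the forward directions \ref{lem:discrete_emb 1a} and \ref{lem:discrete_emb 2a} I would build $f$ explicitly, block by block. On the $i$-th block I send $[\mapone_0(i),\mapone_1(i))^{\arrowup}$ onto $[\mapone_0(\sigma(i)),\mapone_1(\sigma(i)))^{\arrowup}$ by the increasing affine bijection, so that $f$ extends $\sigma_\mapone$. For the compact part I use $\sigma(a)\subseteq b$: if $i\in a$ then $\sigma(i)\in b$ and I map the singleton $\{\mapone_2(i)\}$ to the singleton $\{\mapone_2(\sigma(i))\}$; if $i\notin a$ and $\sigma(i)\notin b$ I map $[\mapone_2(i),\mapone_3(i)]^{\arrowup}$ affinely onto $[\mapone_2(\sigma(i)),\mapone_3(\sigma(i))]^{\arrowup}$; and if $i\notin a$ but $\sigma(i)\in b$ I collapse $[\mapone_2(i),\mapone_3(i)]^{\arrowup}$ to the point $\mapone_2(\sigma(i))$. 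On the intervening gaps I interpolate monotonically into the corresponding gaps of $A_b$; since $\sigma$ maps onto a convex (hence interval) suborder, consecutive image blocks are consecutive blocks of $A_b$, and discreteness of $\mapone$ leaves enough room so that gaps map into the complement of $A_b$. Points below or above the image of $L$, and limit points arising from gaps of $L$, are sent to fixed points of suitable gaps of $A_b$, with continuity guaranteed by discreteness. One then checks $f^{-1}(A_b)=A_a$ block by block. In \ref{lem:discrete_emb 2a}, where $\sigma$ is onto, this $f$ is surjective: each half-open component of $A_b$ is hit by the affine bijection, and since $j\notin b$ forces $\sigma^{-1}(j)\notin a$, every nondegenerate compact component is covered by an interval-to-interval map.

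For the backward directions \ref{lem:discrete_emb 1b} and \ref{lem:discrete_emb 2b}, the claims proved inside Lemma \ref{images of L-structured sets} already give that $i\mapsto f^\mapone(i)$ is order preserving once $\blacktriangleleft$ is fixed, hence an order isomorphism onto its image. The two remaining points, convexity of the image and the inclusion $f^\mapone(a)\subseteq b$, I would derive from one crossing argument. Let $i$ have direct successor $i^+$, put $j=f^\mapone(i)$, $j^+=f^\mapone(i^+)$, and recall $f(\mapone_1(i))=\mapone_1(j)$, $f(\mapone_0(i^+))=\mapone_0(j^+)$. If some $k$ satisfies $j<k<j^+$, then by the intermediate value theorem every value of $[\mapone_0(k),\mapone_1(k))^{\arrowup}\subseteq A_b$ is attained by $f$ on $[\mapone_1(i),\mapone_0(i^+)]^{\arrowup}$, whose only $A_a$-points form $C_i$; thus $[\mapone_0(k),\mapone_1(k))^{\arrowup}\subseteq f(C_i)$, and compactness of $f(C_i)\subseteq A_b$ forces the omitted endpoint $\mapone_1(k)$ into $A_b$, a contradiction — so the image is convex. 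If instead $i\in a$ but $j\notin b$, then $C_i=\{\mapone_2(i)\}$ maps to the single point $\mapone_2(j)$, yet to reach $\mapone_0(j^+)$ the map must sweep across $(\mapone_2(j),\mapone_3(j)]^{\arrowup}\subseteq A_b$ on the gap $(\mapone_2(i),\mapone_0(i^+))^{\arrowup}$, which is disjoint from $A_a$ — again a contradiction, giving $f^\mapone(a)\subseteq b$. The non-successor (limit) case is handled by the companion inequality $f(C_i)\blacktriangleleft f(\mapone_1^+(i))$ from Lemma \ref{images of L-structured sets}. Finally, in \ref{lem:discrete_emb 2b} surjectivity gives $f(A_a)=A_b\cap\ran(f)=A_b$, so every half-open component of $A_b$ is a component of $f(A_a)$, whence $f^\mapone$ is onto, i.e. an automorphism.

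The main obstacle is exactly this crossing argument and the convexity it yields: one must rule out, using only continuity, the intermediate value theorem, and compactness of the $C_i$, that a single compact piece of $A_a$ could continuously sweep across an omitted block of $A_b$ or across a nondegenerate compact component. The bookkeeping in the forward construction — continuity at limit points of $L$ and the verification that gaps map to gaps — is routine but lengthy and leans throughout on the discreteness of $\mapone$.
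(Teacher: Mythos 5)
Your proposal is correct and takes essentially the same route as the paper: the forward direction is the same block-by-block construction extending $\sigma_\mapone$ (with the identical three-case treatment of the compact parts and discreteness used for the continuous extension), and the backward direction likewise rests on Lemma \ref{images of L-structured sets}, convexity of $\ran(f)$, and the singleton-versus-nondegenerate-interval distinction to obtain $f^\mapone(a)\subseteq b$. The only harmless imprecision is the claim that $f(\mapone_2(i))=\mapone_2(j)$ in your crossing argument --- one only knows that $f(C_i)$ is a single point of $[\mapone_2(j),\mapone_3(j)]^{\arrowup}$, which is all the argument actually needs.
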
 
\begin{proof} 
We will write $\ran_a(\mapone)$ for the set of all $\mapone_k(i)$ for $k\in \{0,1,2\}$ and all $\mapone^a(i)$ for $i\in L$. 
It is sufficient to prove the first claim, since the second claim follows immediately.

We begin by assuming that $\sigma$ is given and defining the following map on 
$\ran_a(\mapone)$. 
Let $f(\mapone_k(i))=\mapone_k(\sigma(i))$ for $k\in\{0,1,2\}$ and $f(\mapone^a(i))=\mapone^b(\sigma(i))$ for $i\in L$ -- this is well-defined since $\sigma(a)\subseteq b$. 
Moreover, this map is increasing and has a unique continuous extension to the closure $C$ of $\ran_a(\mapone)$, since $\ran_a(\mapone)$ is scattered. 
Since $\xi$ is discrete by our assumption, this can be easily extended to a map on $\RR$ with the required properties, for instance by choosing $f$ to be affine on each maximal open interval that is disjoint from $C$.

Now assume that $f$ is given. By Lemma \ref{images of L-structured sets}, the image $f(A_a)$ is $L$-structured by $f\circ \mapone$. Since $\ran(f)$ is a convex subset of $\RR$, there is a convex subset $L_0$ of $L$ with $\ran(\mapone{\upharpoonright}L_0)=\ran(f\circ \mapone{\upharpoonright}L)$. Hence $f^\xi=\xi^{-1}\circ f\circ \xi$ is an isomorphism from $L$ to $L_0$. 
Moreover, assume that $\langle C_i\mid i\in L\rangle$ and $\langle D_i\mid i\in L\rangle$ witness that $f(A_a)$ is $L$-structured by $f\circ \mapone$ and $A_b$ is $L$-structured by $\mapone$. Since $C_i=D_{f_\mapone(i)}$ and this is a singleton for all $i\in a$, it follows that $f_\mapone(i)\in b$. 
\end{proof} 

We note that if the singletons are omitted in the definition of the sets $A_b$, it can be seen that we obtain a version of the previous lemma for functions that are not necessarily increasing. 

In the next theorem, we consider the set $\ZZ^{(\omega)}=\{\vec{z}=\langle z_i\mid i\in\omega\rangle \in \ZZ^\omega\mid \exists n_0\ \forall n\geq n_0\ z_n=0\}$ equipped with the anti-lexicographical order that is defined by $\vec{x}=\langle x_i\mid i\in\omega\rangle\leq\vec{y}=\langle y_i\mid i\in\omega\rangle$ if $\vec{x}=\vec{y}$ or $x_i<y_i$ for the largest $i$ with $x_i\neq y_i$. It can be easily seen that this linear order is scattered -- it is obtained as the direct limit of $\langle \ZZ^n\mid n\in\omega\rangle$, where $\ZZ^{n+1}$ is ordered as a sequence with order type $\ZZ$ of copies of $\ZZ^n$. Moreover, it can be easily checked that every isomorphism of $\ZZ^{(\omega)}$ onto a convex subset is surjective. 
For any subset $a$ of $\omega$, let $a^\star=\{\vec{z}\in \ZZ^{(\omega)}\mid \vec{z}\leq 0^{\omega} \vee \exists n\in a\ \vec{z}=0^n{}^\smallfrown 1{}^\smallfrown 0^\omega\}$ 
and thus $a$ is coded into a sequence of elements of $\ZZ^{(\omega)}$ that are spaced further and further apart. 
Moreover, let $\iota_n$ denote the shift of $\ZZ^{n+1}$ in its last coordinate that is extended to $\ZZ^{(\omega)}$ by the identity on the remaining copies of $\ZZ^{n+1}$. 
I.e. $\iota_n$ is defined by $\iota_n(\vec{x})=\vec{y}$ for $\vec{x}=\langle x_i\mid i\in\omega\rangle$ and $\vec{y}=\langle y_i\mid i\in\omega\rangle$, where $x_i=y_i$ for all $i\neq n$ and $x_n+1=y_n$ if $x_i=0$ for all $i>n$ and $\vec{x}=\vec{y}$ otherwise. Using this linear order and Theorem \ref{lem:discrete_emb}, we can now prove the first main result.

\begin{Thm} \label{embed Pomega mod fin} 
$\mathcal{P}(\omega)/\mathsf{fin}$ is reducible to the Wadge quasi-order for the real line restricted to its $D_2({\bf \Sigma}^0_1)$-subsets. 
\end{Thm}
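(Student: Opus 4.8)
The plan is to combine the machinery of Lemma \ref{lem:discrete_emb} with the particular linear order $L=\ZZ^{(\omega)}$ and the coding sets $a^\star$ defined above. The key point is that Lemma \ref{lem:discrete_emb}\ref{lem:discrete_emb 2} already gives a tight correspondence: fixing a discrete embedding $\mapone\colon L^{\otimes 4}\rightarrow\RR$, a surjective reduction of $A_a$ to $A_b$ exists precisely when there is an automorphism $\sigma$ of $L$ with $\sigma(a)\subseteq b$. Since every isomorphism of $\ZZ^{(\omega)}$ onto a convex subset is surjective (as noted before the theorem), part \ref{lem:discrete_emb 1} collapses into part \ref{lem:discrete_emb 2} for this $L$, so that \emph{every} reduction of $A_{a^\star}$ to $A_{b^\star}$ arises from an automorphism. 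Thus the whole problem reduces to understanding the automorphism group of $\ZZ^{(\omega)}$ and how these automorphisms interact with the coded sets $a^\star$.

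First I would analyze the automorphisms of $\ZZ^{(\omega)}$. The intended map is $a\mapsto A_{a^\star}$ from subsets of $\omega$ to $D_2(\BSigma^0_1)$-subsets of $\RR$, and I want to show $a\subseteq_{\mathsf{fin}} b$ if and only if $A_{a^\star}\leq A_{b^\star}$. For the forward direction, if $a\setminus b$ is finite, I would build an automorphism $\sigma$ of $\ZZ^{(\omega)}$ that shifts the finitely many "bad" coded points $0^n{}^\smallfrown1{}^\smallfrown0^\omega$ (for $n\in a\setminus b$) down into the initial segment $\{\vec z\leq 0^\omega\}$, which is automatically contained in $b^\star$ regardless of $b$; the shifts $\iota_n$ are exactly the generators provided for this purpose, since $\iota_n$ moves the relevant coordinate and fixes the large-scale structure. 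Composing finitely many such shifts yields $\sigma$ with $\sigma(a^\star)\subseteq b^\star$, and Lemma \ref{lem:discrete_emb}\ref{lem:discrete_emb 2a} then produces the reduction. For the converse, I would use Lemma \ref{lem:discrete_emb}\ref{lem:discrete_emb 2b}: any reduction induces an automorphism $f^\mapone=\sigma$ with $\sigma(a^\star)\subseteq b^\star$; the structure of $\ZZ^{(\omega)}$ as an iterated $\ZZ$-product forces every automorphism to act on the "widely spaced" coded points $0^n{}^\smallfrown1{}^\smallfrown0^\omega$ in a controlled way, moving all but finitely many of them to points of the same form, which then translates $\sigma(a^\star)\subseteq b^\star$ into $a\subseteq_{\mathsf{fin}}b$.

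The main obstacle will be this last analysis of $\mathrm{Aut}(\ZZ^{(\omega)})$ and its action on the distinguished points $p_n=0^n{}^\smallfrown1{}^\smallfrown0^\omega$. The crucial combinatorial fact is that the points $p_n$ are "spaced further and further apart", so that an automorphism, being order-preserving and respecting the Cantor--Bendixson / iterated-limit structure, cannot for instance collapse infinitely many $p_n$ into the bottom block $\{\vec z\leq 0^\omega\}$ nor permute them wildly. I would argue that for all but finitely many $n$, an automorphism sends $p_n$ either to some $p_m$ with $m=n$ (or at least with $m\in\omega$ determined up to the large-scale shift) or into the bottom block, and that only finitely many indices can be absorbed into the bottom. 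Quantifying "only finitely many" is exactly what yields $\subseteq_{\mathsf{fin}}$ rather than plain $\subseteq$, and pinning it down precisely from the order-theoretic definition of $\ZZ^{(\omega)}$ is the delicate step.

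Finally, I would confirm that all the sets $A_{a^\star}$ are genuinely $D_2(\BSigma^0_1)$: this follows from the remark after the definition of $A_b$, since $\mapone$ is a discrete embedding of a countable scattered order, so each $A_{a^\star}$ is the intersection of an open and a closed set. This establishes that $a\mapsto A_{a^\star}$ is a reduction of $\mathcal{P}(\omega)/\mathsf{fin}$ into the Wadge quasi-order restricted to $D_2(\BSigma^0_1)$-subsets, completing the proof.
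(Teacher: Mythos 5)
Your proposal follows essentially the same route as the paper: it reduces the problem via Lemma \ref{lem:discrete_emb} to the combinatorics of automorphisms of $\ZZ^{(\omega)}$ acting on the coded points $0^n{}^\smallfrown 1{}^\smallfrown 0^\omega$, uses the shifts $\iota_n$ for the forward direction, and for the converse uses the fact that an automorphism eventually fixes all high coordinates (the paper makes your ``controlled action'' precise by noting that if $\sigma(0^\omega)$ vanishes from coordinate $n$ on, then $\sigma$ preserves all coordinates $\geq n$ of every point, hence fixes $p_m$ for $m\geq n$). The argument is correct and matches the paper's proof in all essentials.
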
 
\begin{proof} 
We first claim that for all subsets $a$ and $b$ of $\omega$, $a\subseteq_{\mathsf{fin}}b$ holds if and only if there is an automorphism $\sigma$ of $\ZZ^{(\omega)}$ with $\sigma(a^\star)\subseteq b^\star$. 
To prove this, we first assume that $a\subseteq b\setminus n$ and let $\sigma=\iota_n^{-1}$. 
Then for any $m\in a$, we have $\sigma(\vec{z})\leq 0^\omega$ if  $m\leq n$ and $\sigma(\vec{z})=\vec{z}$ if $m>n$ and hence $\sigma(a^\star)\subseteq b^\star$. 
For the other implication, assume that $\sigma$ is an automorphism of $\ZZ^{(\omega)}$ with $\sigma(a^\star)\subseteq b^\star$. 
Let $\sigma(0^\omega)=\vec{x}=\langle x_i\mid i\in\omega\rangle$ with $x_i=0$ for all $i\geq n$. Then it is easy to see that for any $\vec{y}=\langle y_i\mid i\in\omega\rangle$ and $i\geq n$, we have $\sigma(\vec{y})=\vec{z}=\langle z_i\mid i\in\omega\rangle$ with $y_i=z_i$ for all $i\geq n$. 

To see that $a\subseteq_{\mathsf{fin}}b$, we assume that $m\geq n$ is an element of $a$ and let $\vec{x}=0^m{}^\smallfrown 1 {}^\smallfrown 0^\omega$. Since $\vec{x}=\sigma(\vec{x})\in\sigma(a^\star)\subseteq b^\star$ by assumption, it follows that $m\in b$ as required. 
To prove the statement, we fix any discrete embedding $\mapone\colon L^{\otimes 4}\rightarrow \RR$ for $L=\ZZ^{(\omega)}$ with $\ran(\mapone)$ cofinal and coinitial in $\RR$ and let $A_b$ denote the sets defined before Lemma \ref{lem:discrete_emb} 
for subsets $b$ of $\omega$. 
Since it is easy to see that every isomorphism of $\ZZ^{(\omega)}$ onto a dense subset is an automorphism, Lemma \ref{lem:discrete_emb} now implies that $a\subseteq_{\mathsf{fin}} b$ if and only if $A_{a^\star}$ is reducible to $A_{b^\star}$. Thus the map that sends a subset $b$ of $L$ to $A_{b^\star}$ is a reduction as required. 
\end{proof}

Since every poset of size $\omega_1$ is embeddable into $\mathcal{P}(\omega)/\mathsf{fin}$ by a result of Parovi\v{c}enko \cite{Parovicenko}, the previous result shows in particular that $\omega_1$ and its reverse $\omega_1^\star$ are embeddable into the Wadge quasi-order on the Borel sets. 
Moreover, we will show in Theorem \ref{cardinal characteristics} below that this is optimal in the sense that it is consistent with large  values of the continuum that neither of $\omega_2$ and $\omega_2^\star$ is embeddable. 

We would further like to point out that the above proofs also show that the equivalence relation defined by Wadge equivalence on the collection of countable unions of intervals is not smooth. 
To state this more precisely, we fix a canonical coding of Borel sets and can then easily check that the map that sends a subset $b$ of $L$ to $A_b$ is Borel measurable in the codes. 
By the previous results, this defines a reduction of the equivalence relation $E_0$ of equality up to finite error to Wadge equivalence of countable unions of intervals. 
Moreover, by a standard result in Borel reducibility, this implies that this equivalence relation is not smooth.

Extending Theorem \ref{embed Pomega mod fin}, we finally construct a reduction of $\mathcal{P}(\omega)/\mathsf{fin}$ to the Wadge quasi-order below $\QQ$. 
The construction is based on the proof of Theorem \ref{embed Pomega mod fin}, but instead of adapting the previous more general method, we here present a shorter and more direct proof. 

To this end, we will form preimages of countable dense sets with respect to the Cantor function and thus replace countably many points by closed intervals. 
To define the sets that are used to prove the next result, we first let $x_0\in \RR\setminus \QQ_2$, $A=f_c^{-1}(\QQ_2+x_0)$, $B=f_c^{-1}(\QQ_2)$ and $B^+=B\cup [0,\frac{2}{3}]$. 
We further let $[x,y)^D$ denote the image of $D\cap [0,1)$ by the unique affine increasing map from $[0,1)$ onto $[x,y)$ for any subset $D$ of $\RR$. 
Moreover, we let $L$ denote the ordinal $\omega^\omega$ defined via ordinal exponentiation, fix an order-preserving continuous map $\mapone\colon L^{\otimes 2}\rightarrow \RR$ 
and define 
$$A_b=\bigcup_{i\in L} [\mapone_0(i),\mapone_1(i))^A\cup [\mapone_1(i),\mapone_0(i+1))^{B(i)}$$ 
for any subset $b$ of $\omega$, where $B(i)$ is equal to $B^+$ if $i=\omega^n$ for some $n\notin b$ and to $B$ otherwise. 
It can be easily seen that $A_b$ is reducible to $\QQ$ by a monotone function, since this is true for both $A$ and $B^+$ and moreover $\mapone_0(0)\notin A_b$ by the choice of $x_0$. 

In the next proof, we use the following facts about these sets. 
If $I$ denotes the unit interval, then $B^+\cap I$ is reducible to $B\cap I$ by a function on $I$ that maps $[0,\frac{2}{3}]$ to $0$ while expanding the remainder by a factor of $3$. However, there is no function on $I$ that fixes $0$ and reduces $B\cap I$ to $B^+\cap I$, since the closed subinterval of $B^+$ at $0$ could not appear in the image. 
Moreover, for any open interval $I$ that is not contained in and not disjoint from $A$ and for $C$ either equal to $B$ or $B^+$, the set $A\cap I$ is not reducible to $C$ by a function on $I$ and conversely for $A$ and $C$ exchanged. 
To see this, use the fact that $A$ and $\RR\setminus C$ are nowhere dense. 
Assuming that $f$ is a function on $I$ that reduces $A\cap I$ to $C$, it follows that the image of $I\setminus A$ is somewhere dense but this contradicts the previous facts and moreover, the proof of the reverse case is symmetric by passing to complements. 

\begin{Thm} \label{embedding of Pomega mod fin below Q} 
$\mathcal{P}(\omega)/\mathsf{fin}$ is reducible to the Wadge quasi-order for the real line restricted to the sets that are reducible to $\QQ$ by monotone functions. 
\end{Thm}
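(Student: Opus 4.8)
The plan is to show that the assignment $b\mapsto A_b$ is a reduction of the quasi-order $(\mathcal{P}(\omega),\subseteq_{\mathsf{fin}})$ into the Wadge quasi-order restricted to the stated class. Each $A_b$ already lies in that class, since it is reducible to $\QQ$ by a monotone function, as noted before the statement. So it remains to prove the equivalence $a\subseteq_{\mathsf{fin}}b\Leftrightarrow A_a\leq A_b$ for all $a,b\subseteq\omega$. Throughout I use that the blocks tile a final ray of $\RR$: block $i$ is $[\mapone_0(i),\mapone_1(i))^A\cup[\mapone_1(i),\mapone_0(i+1))^{B(i)}$, its $A$-part is nowhere dense and its $B(i)$-part is co-nowhere-dense in its interval, and $B(i)=B^+$ exactly at the markers $i=\omega^n$ with $n\notin b$.

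For the forward direction, suppose $a\subseteq_{\mathsf{fin}}b$ and fix $k$ such that $n\in a$ implies $n\in b$ for all $n>k$. Consider the order isomorphism $\sigma(i)=\omega^k\cdot 2+i$ of $L=\omega^\omega$ onto the final segment $[\omega^k\cdot 2,\omega^\omega)$. A short computation with ordinal arithmetic shows that $\sigma(i)$ is a marker $\omega^m$ only when $i=\omega^n$ with $n>k$, in which case $\sigma(\omega^n)=\omega^n$ by absorption; every other block of the source is sent to a non-marker block of the target, which carries $B$. I would build a monotone reduction $f$ realizing $\sigma$: map $(-\infty,\mapone_0(0)]$ constantly to $\mapone_0(\omega^k\cdot 2)$ (which lies outside $A_b$ since $0\notin A$), and on each block map the $A$-part onto the target $A$-part by the affine increasing map (which preserves the $A$-pattern) and the $B$-part onto the target $B$-part. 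For the $B$-parts the only cases are $B\to B$ and $B^+\to B^+$ (affine, pattern-preserving) and $B^+\to B$ (the collapse of $[0,\tfrac23]$ described before the theorem); the excluded case $B\to B^+$ never occurs, because at a fixed marker $\omega^n$ with $n>k$ the source carries $B$ only when $n\in a$, whence $n\in b$ by the choice of $k$, so the target carries $B$. Since $\sigma$ is successor- and limit-preserving, these local maps glue to a continuous monotone $f$ with $A_a=f^{-1}(A_b)$.

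For the backward direction, let $f$ be an arbitrary continuous reduction of $A_a$ to $A_b$. First I would show that $f$ respects the block structure: using that $A$ is nowhere dense while $\RR\setminus B$ and $\RR\setminus B^+$ are nowhere dense (the facts recorded before the theorem), a Baire-category argument shows that $f$ cannot map a subinterval of a $B$-part onto a set meeting the interior of an $A$-part, nor conversely. Hence the image of each $A$-part lies in the closure of a single target $A$-part and the image of each $B$-part in the closure of a single target $B$-part; matching the shared endpoints $\mapone_1(i)$ and $\mapone_0(i+1)$ as in the proof of Lemma \ref{images of L-structured sets} yields an order map $\phi$ on blocks with $f(\mapone_1(i))=\mapone_1(\phi(i))$ and $\phi(i+1)=\phi(i)+1$. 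Because $L=\omega^\omega$ is well-ordered, $\phi$ cannot be order-reversing, so it is a continuous, successor-preserving isomorphism of $L$ onto a final segment $[\phi(0),\omega^\omega)$. The point of using the ordinal $\omega^\omega$ is that such a $\phi$ then fixes every sufficiently large marker: if $\omega^n>\phi(0)$ then $\phi(\omega^n)=\phi(0)+\omega^n=\omega^n$ by absorption. Fixing such an $n$ with $n\in a$, the map $f$ sends the $B$-part of the source block $\omega^n$ onto the $B$-part of the target block $\omega^n$ by an increasing homeomorphism fixing the left endpoint $\mapone_1(\omega^n)$; were $n\notin b$, the target $B$-part would be a $B^+$-part, so $f$ would restrict to a reduction of $B$ to $B^+$ fixing the left endpoint, contradicting the impossibility recorded before the theorem. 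Hence $n\in b$, and so $a\subseteq_{\mathsf{fin}}b$.

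I expect the main obstacle to be the block-preservation step in the backward direction, that is, the analogue of Lemma \ref{images of L-structured sets}. It cannot be quoted directly, since the sets $A_b$ are not $L$-structured in the sense of Definition \ref{definition - L-structured}: the half-open intervals there are replaced by the nowhere-dense $A$-patterns. The separating role those intervals played is instead taken over by the $A$-parts, and the required order- and alignment-preservation has to be re-derived from the two incomparability facts together with the intermediate value theorem, exactly as in the claims inside the proof of Lemma \ref{images of L-structured sets}.
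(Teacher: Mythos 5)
Your proposal is correct and follows essentially the same route as the paper's proof: the same sets $A_b$, a forward reduction given by an ordinal shift that fixes all sufficiently large markers $\omega^n$ by additive absorption (the paper uses $i\mapsto\omega^{n_0}+1+i$ where you use $i\mapsto\omega^k\cdot 2+i$, an immaterial difference), and a backward direction that derives block preservation from the two nowhere-density incomparability facts, concludes $\phi(i)=\phi(0)+i$, and then invokes the impossibility of reducing $B$ to $B^+$ while fixing the left endpoint. Your closing remark that Lemma \ref{images of L-structured sets} cannot be quoted verbatim and that the alignment argument must be redone with the $A$-parts playing the separating role is exactly the stance the paper takes in giving its "shorter and more direct" argument.
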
 
\begin{proof} 
We will show that $a\subseteq_{\mathsf{fin}} b$ if and only if $A_a$ is reducible to $A_b$. To this end, we first assume that $a\subseteq_{\mathsf{fin}}b$ and choose some $n_0$ such that every $n\in a$ with $n\geq n_0$ is an element of $b$. 
Using the fact that there is a function on the unit interval that fixes both $0$ and $1$ and reduces $B^+$ to $B$, we can easily obtain a monotone reduction of $A_a$ to $A_b$ that maps $\mapone_k(i)$ to $\mapone_k(\omega^{n_0}+1+i)$ for all $i\in L$ and $k\leq 1$.

For the reverse implication, assume that $A_a$ is reducible to $A_b$ by a function $f$. 
By the non-reducibility result for $A$ and $B/B^+$ that is stated before this theorem, no element of $(\mapone_0(i),\mapone_1(i))$ is mapped to $(\mapone_1(j),\mapone_0(j+1))$ for any $j$. 
Moreover, since no element of $[\mapone_0(i),\mapone_1(i))^A$ is mapped strictly above or below $\ran(\mapone)$, it is easy to verify that this holds for the whole interval $[\mapone_0(i),\mapone_1(i)]$. 
Therefore each interval $[\mapone_0(i),\mapone_1(i)]$ is mapped to $[\mapone_0(\alpha_i),\mapone_1(\alpha_i)]$ for some $\alpha_i\in L$. 
Moreover, we have $f(\mapone_k(i))=\mapone_k(\alpha_i)$ unless $k=i=0$, since in this case $\mapone_k(i)$ lies on the boundary between the subsets of $A_a$ that are given by $A$ and $B$, respectively. 
We now claim that $\alpha_i=\alpha_0+i$ for all $i\in L$. 
This holds for $i=0$ by the choice of $\alpha_0$ and follows from continuity of $\mapone$ and $f$ in the limit step. 
Moreover, the claim again follows from the non-reducibility result for $A$ and $B/B^+$ in the successor case. 
Assuming that $\alpha_0<\omega^{n_0}$ and using that $\omega^{n_0}$ is additively closed, we thus have $\alpha_i=i$ for all $i\geq \omega^{n_0}$. 
Now every $n\in a$ with $n\geq n_0$ is an element of $b$, since otherwise the closed interval in $A_b$ at $\mapone_1(\omega^n)$ cannot appear in the image of the reduction. 
\end{proof}

\section{No complete sets} 

Recall that a set $A$ in a collection $\mathcal{C}$ that is quasi-ordered by a relation $\preceq$ is called \emph{complete} for $\mathcal{C}$ if $B\preceq A$ holds for all sets $B\in \mathcal{C}$. 
The next result shows that there are no complete sets for the levels of the Borel hierarchy on $\RR$ except on the lowest level. 

\begin{Thm} \label{no complete sets} 
There is no ${\bf \Sigma}^0_\alpha$-complete and no ${\bf \Pi}^0_\alpha$-complete subset of $\RR$ for any countable ordinal $\alpha\geq 2$. 
In fact, there is a collection $\vec{B}=\langle B_a\mid a\subseteq\omega\rangle$ of $D_2({\bf \Sigma}^0_1)$-subsets of $\RR$ such that for all subsets $B$ of $\RR$, we have $B_a\leq B$ for only countably many sets $a$ 
and moreover, the same statement holds for $\check{D}_2({\bf \Sigma}^0_1)$-sets. 
\end{Thm}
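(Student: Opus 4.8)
The plan is to leverage the embedding machinery of Theorem \ref{embed Pomega mod fin} rather than to argue about the Borel hierarchy directly. The key observation is that the stronger second statement implies the first: if $\vec B = \langle B_a \mid a \subseteq \omega\rangle$ is a family of $D_2(\BSigma^0_1)$-sets such that every subset $B$ of $\RR$ has $B_a \leq B$ for only countably many $a$, then no single set $B$ can be above all of the $B_a$ (since there are uncountably many indices $a$), so in particular no set can be complete for any class $\mathcal C$ that contains all the $B_a$. Every $D_2(\BSigma^0_1)$-set lies in $\BSigma^0_\alpha$ and in $\BPi^0_\alpha$ for every $\alpha \geq 2$, so a $\BSigma^0_\alpha$-complete or $\BPi^0_\alpha$-complete set would have to lie above all the $B_a$, giving a contradiction. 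Thus I would first reduce the theorem to constructing $\vec B$ with the stated countability property.

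To build $\vec B$, I would reuse exactly the family from the proof of Theorem \ref{embed Pomega mod fin}: fix the discrete embedding $\mapone\colon L^{\otimes 4} \to \RR$ for $L = \ZZ^{(\omega)}$ and set $B_a = A_{a^\star}$, where $a^\star$ codes $a$ into $\ZZ^{(\omega)}$ as in the excerpt. These are $D_2(\BSigma^0_1)$-sets. By Lemma \ref{lem:discrete_emb}, any reduction $f$ of $A_{a^\star}$ to an arbitrary target $B$ sends $A_{a^\star}$, which is $L$-structured by $\mapone$, to a set that is $L$-structured by $f \circ \mapone$ (Lemma \ref{images of L-structured sets}). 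The crucial point is that the range of an $L$-structuring map is determined by the half-open sub-intervals of the target set, so the pattern of half-open intervals of $B$ already fixes, up to the finitely-many-automorphism ambiguity, which indices $a$ can occur. Concretely, a reduction $f$ of $B_a$ into $B$ pins down an order-embedding of $L$ as a convex suborder determined by $B$, and the combinatorial analysis of Theorem \ref{embed Pomega mod fin} shows $B_a \leq B$ forces $a \subseteq_{\mathsf{fin}} b$ for a fixed $b$ read off from $B$.

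The main step is therefore to show that for a fixed target $B$, the set of $a$ with $B_a \leq B$ is countable. I would argue that each such reduction identifies a copy of the half-open-interval pattern of $B_a$ inside the half-open-interval pattern of $B$; since $B$ is a fixed subset of $\RR$, its half-open intervals form a single fixed (at most countable) configuration, and the analysis of Theorem \ref{embed Pomega mod fin} shows that any $a$ with $B_a \leq B$ must satisfy $a \subseteq_{\mathsf{fin}} b_B$ for a single set $b_B$ extracted from this configuration. As there are only countably many subsets $a$ of $\omega$ with $a \subseteq_{\mathsf{fin}} b_B$, the claim follows. The dual statement for $\check D_2(\BSigma^0_1)$-sets is obtained by replacing each $B_a$ with its complement $\RR \setminus B_a$ and observing that reducibility is preserved under complementation on both sides (a function reduces $A$ to $B$ iff it reduces $\RR\setminus A$ to $\RR \setminus B$), so the identical counting argument applies.

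The hard part will be making precise the claim that a single fixed target $B$ admits at most countably many indices $a$ with $B_a \leq B$. The subtlety is that $B$ is an arbitrary subset of $\RR$, not one of the structured sets, so I cannot directly apply Lemma \ref{lem:discrete_emb} (whose hypotheses concern reductions between two $L$-structured sets $A_a$ and $A_b$). The correct route is to note that a reduction $f$ of $B_a$ to $B$ makes $f(B_a)$ an $L$-structured subset of $B$, and that the $L$-structure it carries, in particular the location of its half-open intervals, is a suborder of the fixed countable family of half-open intervals of $B$; distinct $a$ differing infinitely must produce distinct such patterns, yielding the countability bound. I expect the bulk of the care to go into verifying that the extraction of $b_B$ from $B$ is well-defined and that the finite-difference ambiguity does not inflate the count beyond countable.
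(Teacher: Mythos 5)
There is a genuine gap, and it is located exactly where you predicted the "hard part" would be. Your counting step asserts that once a set $b_B$ is extracted from the target $B$, only countably many $a$ satisfy $a\subseteq_{\mathsf{fin}} b_B$. This is false whenever $b_B$ is infinite: every subset of an infinite $b_B$ already satisfies $a\subseteq_{\mathsf{fin}} b_B$, so there are continuum many such $a$. The same problem shows that your chosen family cannot work at all: if $B_a=A_{a^\star}$ is the family from Theorem \ref{embed Pomega mod fin}, then taking $B=A_{\omega^\star}$ gives $B_a\leq B$ for \emph{every} $a\subseteq\omega$, since $a\subseteq_{\mathsf{fin}}\omega$ always holds. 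So the family realizing the embedding of $\mathcal{P}(\omega)/\mathsf{fin}$ is precisely the wrong one here; an embedding of $\mathcal{P}(\omega)/\mathsf{fin}$ has a maximum above uncountably many elements, which is the opposite of what the theorem needs. Your fallback remark that "distinct $a$ differing infinitely must produce distinct patterns" inside the fixed countable configuration of half-open intervals of $B$ also does not yield countability, since a countable configuration admits uncountably many distinct sub-patterns.

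The paper's proof uses a different family and a different mechanism. It takes $L$ to be $\omega^2$ with its reverse order and a discrete $\mapone\colon L^{\otimes 2}\rightarrow\RR$, and encodes $a$ into an $L$-structured set $A_a$ by letting the compact pieces $C^a_i$ be nonempty or empty according to membership in $a$. Given reductions $f_a$ of $A_a$ to a fixed $B$ for uncountably many $a$, each image of the first half-open interval $[\mapone_0(0),\mapone_1(0))^{\arrowup}$ is a half-open interval with endpoints in $\partial B$; among uncountably many such intervals two must intersect and hence coincide, and then the rigidity of $(\omega^2)^\star$ (an isomorphism onto a convex subset fixing the first point is the identity) forces $f_a\circ\mapone=f_b\circ\mapone$, hence $C^a_i\neq\emptyset\Leftrightarrow C^b_i\neq\emptyset$ for all $i$, hence $a=b$ exactly, a contradiction. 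The essential ideas you are missing are (i) an encoding where comparability with a common target forces \emph{equality} of indices rather than $\subseteq_{\mathsf{fin}}$, and (ii) the pigeonhole on pairwise disjoint intervals in $\RR$ that converts "uncountably many indices" into two indices with coinciding first intervals. Your reduction of the completeness statement to the countability statement, and the complementation argument for $\check{D}_2({\bf \Sigma}^0_1)$, are both fine.
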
 
\begin{proof} 
We let $L$ denote the ordinal $\omega^2$ with its reverse order and fix a discrete order-preserving map $\mapone\colon L^{\otimes 2}\rightarrow \RR$. 
For any subset $b$ of $\omega$, let $A_b$ be a subset of $\RR$ that is $L$-structured by $\mapone$, witnessed by a sequence $\vec{C}=\langle C_i^b\mid i\in L\rangle$ of compact sets with $C_i^b$ nonempty if and only if $n\in b$. 
Suppose that there is a subset $B$ of $\RR$ and reductions $f_b$ of $A_b$ to $B$ for uncountably many $b$. 
By Lemma \ref{images of L-structured sets}, we have that $f_b(A_b)$ is $L$-structured by $f_b\circ \mapone$. 
Moreover, by the second claim in the proof of Lemma \ref{images of L-structured sets}, this is witnessed by the sequence $\langle f_b(C_i^b)\mid i\in L\rangle$. Note that $f_b(C_i^b)$ is nonempty if and only if $i\in b$. 
For each such $a$, the image of $[\mapone_0(0),\mapone_1(0))^{\arrowup}$ is equal to the interval $[f_a(\mapone_0(0)),f_a(\mapone_1(0)))^{\arrowup}$, since $f_a$ is a reduction of $A_a$. 
Moreover, there are distinct $a$, $b$ such that the corresponding intervals have non-empty intersection and thus they are equal. 
We may now assume that $\sup(f(A_a))\leq \sup(f(A_b))$ by symmetry. 
Since the sets $f_a(A_a)$, $f_b(A_b)$ are $L$-structured by $f_a\circ \mapone$, $f_b\circ \mapone$ and their first intervals are equal, it follows that $f=(f_b\circ \mapone)^{-1} \circ(f_a\circ \mapone)$ is an isomorphism of $L$ onto a convex subset with $f(0)=0$. Thus $f=\id$ and 
$f_a\circ \mapone = f_b\circ \mapone$. 
Hence for all $i\in L$ and all $n\in\omega$, we have 
$n\in a \Longleftrightarrow C_i^a\neq \emptyset\Longleftrightarrow f_a(C_i^a)= f_b(C_i^b)\neq \emptyset \Longleftrightarrow C_i^b\neq \emptyset \Longleftrightarrow n\in b$,  
contradicting our assumption that $a\neq b$. 
\end{proof}

\section{Restrictions to classes of $F_\sigma$ sets} \label{section Fsigma} 





Beginning with the simplest sets, Selivanov \cite{Selivanov_Variations} observed that any two non-trivial closed subsets of $\mathbb{R}$ are Wadge equivalent and the same holds for open sets by passing to complements; hence these sets are minimal. 
It was further noticed by Steel \cite[Section 1]{Steel_Analytic} that any non-trivial open subset of $\RR$ is not reducible to $\QQ$ and vice versa. 
Progressing to $F_\sigma$ sets, we will consider the following combinatorial conditions with a twofold motivation: first, they characterize incomparability with open and closed sets and second, 
reducibility to $\QQ$.

\begin{Def} \label{definition: condition I} 
A subset of the real line satisfies condition ($\text{I}_0$) (where $\text{I}$ stands for \emph{interval closure}) if it contains the end points of any interval that it contains as a subset. Also, it satisfies ($\text{I}_1$) if its complement satisfies  ($\text{I}_0$), and ($\text{I}$) if both ($\text{I}_0$) and ($\text{I}_1$) hold. 
\end{Def} 

For example, any countable dense set satisfies $(\text{I})$ and it is easy to see that these conditions are stable under forming continuous preimages. 
Note that a set $A$ satisfies $(\text{I}_1)$ if and only if every element of $A$ is a limit of $A$ from both sides. Moreover, the next result characterizes this condition in terms of continuous reducibility. 

\begin{Lem}\label{I} 
The condition $(\text{I}_0)$ for a non-trivial subset $A$ of $\RR$ is equivalent to the statements that no non-trivial open set is reducible to $A$ and that no non-trivial open set is comparable with $A$. 
\end{Lem}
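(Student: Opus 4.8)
The plan is to establish the three-way equivalence by a short cycle of implications, relying on two facts about condition $(\text{I}_0)$. The first, already noted in the text, is that $(\text{I}_0)$ is stable under continuous preimages. The second, which I would record as a preliminary observation, is that \emph{no} non-trivial open subset of $\RR$ satisfies $(\text{I}_0)$: writing a non-trivial open set $U$ as the disjoint union of its connected components, each component is a proper open interval, and since $U\neq\RR$ at least one component $I$ has a finite endpoint $c$; by maximality of the component, $c\notin U$, so $I\subseteq U$ is an interval one of whose endpoints is missing from $U$, witnessing the failure of $(\text{I}_0)$.

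With these in hand I would run the cycle $(\text{I}_0)\Rightarrow[\text{no non-trivial open set is comparable with }A]\Rightarrow[\text{no non-trivial open set is reducible to }A]\Rightarrow(\text{I}_0)$. The middle implication is immediate, since reducibility to $A$ is a special case of comparability. For the first implication, suppose $A$ satisfies $(\text{I}_0)$ and let $U$ be a non-trivial open set. If $U\leq A$, say $U=f^{-1}(A)$, then stability under preimages forces $U$ to satisfy $(\text{I}_0)$, contradicting the preliminary observation. If instead $A\leq U$, say $A=g^{-1}(U)$, then $A$ is the continuous preimage of an open set, hence open; being non-trivial and open it fails $(\text{I}_0)$, contrary to hypothesis. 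Thus $U$ is incomparable with $A$.

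For the last implication I argue contrapositively and construct the reduction by hand. If $(\text{I}_0)$ fails there is a point $c\notin A$ together with $\epsilon>0$ such that $(c,c+\epsilon)\subseteq A$ (the case of a left-approaching interval $(c-\epsilon,c)\subseteq A$ being symmetric). I would then define a continuous $f\colon\RR\to\RR$ by setting $f(x)=c$ for $x\leq 0$ and for $x\geq 1$, and $f(x)=c+\tfrac{\epsilon}{2}\sin(\pi x)$ for $0\leq x\leq 1$. This is continuous, its values on $(0,1)$ all lie in $(c,c+\epsilon)\subseteq A$, and its values off $(0,1)$ equal $c\notin A$; hence $f^{-1}(A)=(0,1)$, exhibiting a non-trivial open set reducible to $A$.

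The argument is essentially routine once the two facts about $(\text{I}_0)$ are isolated; the only points requiring care are the uniform verification that every non-trivial open set (including unbounded or accumulating components) violates $(\text{I}_0)$, and checking in the final step that the bump map $f$ has preimage exactly $(0,1)$ and not a larger set — this is where one must use that the \emph{entire} interval $(c,c+\epsilon)$, and not merely some of its points, lies in $A$.
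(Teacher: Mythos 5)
Your proof is correct, and it follows the same overall scheme as the paper's: a cycle of three implications in which ``no comparable open set implies no reducible open set'' is trivial, and ``no reducible open set implies $(\text{I}_0)$'' is proved contrapositively by exhibiting a reduction of a fixed non-trivial open set onto an approach interval $(c,c+\epsilon)\subseteq A$ with $c\notin A$ --- exactly the paper's construction, with your explicit bump function playing the role of the paper's map that sends $U$ to $(x,y)$ and its complement to $x$. The one place you genuinely diverge is the implication from $(\text{I}_0)$ to incomparability. The paper reduces both cases of comparability to the existence of a single reduction $f$ of $(0,\infty)$ to $A$ (using that a set reducible to an open set is itself open, together with the fact that all non-trivial open sets are Wadge equivalent) and then reads off a violation of $(\text{I}_0)$ from the image: $f((0,\infty))$ is an interval contained in $A$ whose endpoint $f(0)$ lies outside $A$. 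You instead isolate the observation that no non-trivial open subset of $\RR$ satisfies $(\text{I}_0)$ and combine it with the preimage-stability of $(\text{I}_0)$ already recorded in the text, handling the case $A\leq U$ by noting that $A$ would then itself be open. Your route is marginally more self-contained, since it does not invoke the mutual reducibility of non-trivial open sets, and it cleanly separates the two ingredients (stability under preimages, failure of $(\text{I}_0)$ for open sets); the paper's image-based argument is shorter once that equivalence is taken for granted. Both arguments are sound, and your verification that the bump map has preimage exactly $(0,1)$ is complete.
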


\begin{proof}
We first assume that some non-trivial open set is comparable with $A$. If $A$ is reducible to a open set, then it is itself open and it follows that every non-trivial open set reduces to $A$.  
The same conclusion follows if some open set reduces to $A$. 
In particular, there is a reduction $f$ of $(0,\infty)$ to $A$. Thus $f((0,\infty))$ contains a sub-interval of $A$ with an end point $f(0)$ that is not contained in $A$ and thus witnesses the failure of $(\text{I}_0)$. Assuming conversely that $(\text{I}_0)$ fails, there are $x<y$ such that $(x,y)$ is a subset of $A$ and $x\notin A$ or $y\notin A$ -- we can assume that the former holds. It is then easy to define a reduction of any non-trivial open set $U$ to $A$ that maps $U$ to $(x,y)$ and it complement to $x$. 
\end{proof}

To prove a characterization of $F_\sigma$ sets that satisfy  $(\text{I}_0)$, we will make use of the following property of $F_\sigma$ subsets of the Baire space. 
To state the result, we say that a sequence $\vec{B}$ 
\emph{refines} a sequence $\vec{A}$ 
if every element of $\vec{B}$ is a subset of some element of $\vec{A}$. 
Moreover, let $T/s=\{t\in T\mid s\subseteq t\vee t\subseteq s\}$ if $T$ is a subtree of $\omega^{<\omega}$ and $s\in T$. 


\begin{Fact} \label{lem:Baire space F_sigma} 
For every sequence $\vec{A}$ of closed subsets of the Baire space, there is a sequence $\vec{B}$ of pairwise disjoint closed sets that refines $\vec{A}$. 
\end{Fact}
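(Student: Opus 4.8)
The plan is to first disjointify the sequence in the naive way and then repair the resulting loss of closedness using the strong zero-dimensionality of the Baire space; I will also arrange that the union is preserved, since this is the content that makes the statement useful. Writing $\vec A=\langle A_n\mid n\in\omega\rangle$, I would set $B_n=A_n\setminus\bigcup_{m<n}A_m$. These sets are pairwise disjoint and satisfy $\bigcup_n B_n=\bigcup_n A_n$, but they are in general not closed: each $B_n$ is only the intersection of the closed set $A_n$ with the open set $U_n={}^\omega\omega\setminus\bigcup_{m<n}A_m$, the latter being open because $\bigcup_{m<n}A_m$ is a finite union of closed sets.

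The key step is to cut each $U_n$ into clopen pieces. First I would recall that every open subset $U$ of the Baire space is a union of pairwise disjoint basic clopen sets: letting $S=\{s\in{}^{<\omega}\omega\mid N_s\subseteq U \text{ and } N_{s'}\not\subseteq U \text{ for all } s'\subsetneq s\}$, the set $S$ is an antichain, so the sets $N_s$ for $s\in S$ are pairwise disjoint (two basic clopen sets meet only when one of the defining sequences extends the other), and $\bigcup_{s\in S}N_s=U$, since for each $x\in U$ openness provides an initial segment $s$ of $x$ with $N_s\subseteq U$, and the shortest such $s$ lies in $S$. Applying this to $U_n$ and intersecting with $A_n$, I obtain $B_n=\bigcup_{s\in S_n}(A_n\cap N_s)$, where each $A_n\cap N_s$ is closed (being the intersection of a closed set with a clopen set), is contained in $A_n$, and the pieces are pairwise disjoint because the $N_s$ are.

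Finally I would collect all of these pieces into a single sequence $\vec B$ enumerating the countable family $\{A_n\cap N_s\mid n\in\omega,\ s\in S_n\}$. Each member is closed and contained in some $A_n$, so $\vec B$ refines $\vec A$; distinct members arising from the same $n$ are disjoint because the $N_s$ are, and members arising from distinct $n,n'$ are disjoint because $B_n\cap B_{n'}=\emptyset$; and $\bigcup\vec B=\bigcup_n B_n=\bigcup_n A_n$. The only delicate point is the cutting step, and I expect it to be the crux: the plain disjointification $B_n=A_n\setminus\bigcup_{m<n}A_m$ destroys closedness, and there is no hope of repairing it in a space like $\RR$, where a nondegenerate interval cannot be split into two disjoint nonempty closed sets; it is precisely the availability of a clopen basis in the Baire space that lets one re-express each $U_n$ as a disjoint union of clopen sets and thereby restore closedness while keeping the pieces disjoint.
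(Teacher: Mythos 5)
Your proof is correct and is essentially the paper's own argument: the paper likewise splits $A_n\setminus\bigcup_{m<n}A_m$ into the pieces $A_n\cap N_s=[T_n/s]$ for $s$ ranging over the minimal nodes of $T_n\setminus\bigcup_{m<n}T_m$, which is just the tree-theoretic phrasing of your decomposition of $U_n$ into maximal basic clopen sets. You are also right to insist on preserving the union, which the bare wording of the Fact omits but which is what its application in Theorem \ref{prop:dec_fsigma_I} actually requires.
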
 
\begin{proof} 
Assuming that $\vec{A}=\langle A_n\mid n\in\omega\rangle$, let $T_n$ be a subtree of $\omega^{<\omega}$ with $A_n=[T_n]=\{x\in {}^\omega\omega\mid \forall m\in\omega\ x{\upharpoonright}m\in T_n\}$ for all $n\in\omega$. 
If $\vec{s}=\langle s_{n,i} \mid i\in I_n\rangle$ enumerates the set of minimal nodes in $T_{n}\setminus \bigcup_{m<n}T_m$, 
then we have  
$\bigcup_{m\leq n}[T_m]=\bigcup_{m\leq n, i\in I_m}[T_m/{s_{m,i}}]$ and hence $\bigcup_{n\in \omega}[T_n]=\bigcup_{n\in\omega, i\in I_n}[T_n/{s_{n,i}}]$. 
Moreover, the sets $[T_n/{s_{n,i}}]$ are pairwise disjoint by the choice of $\vec{s}$. 
\end{proof} 

It is easy to see that the previous fact does not hold for all $F_\sigma$ subsets of $\RR$, since it fails for instance for any sequence $\vec{A}$ whose union is an open interval. 
However, one direction of the next theorem shows that the required decomposition is possible if $(\text{I}_0)$ is assumed. To prove this, we need the following fact. 

\begin{Fact} \label{subinterval of union of disjoint closed sets} 
If an open interval $I$ is contained in the union of a sequence $\vec{A}$ of pairwise disjoint closed sets, then it is contained in one of the sets. 
\footnote{This can also be proved by observing that otherwise, the quotient of $I$ with respect to $\vec{A}$ is a regular Hausdorff space and hence metrizable. However, this contradicts the fact that it is also both countable and connected, which is impossible. } 
\end{Fact}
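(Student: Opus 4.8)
The statement is an instance of Sierpi\'nski's classical theorem that a continuum cannot be partitioned into countably many (at least two) pairwise disjoint nonempty closed sets, and the plan is to give a direct proof of exactly the form needed here by applying the Baire category theorem inside $I$. Writing $\vec{A}=\langle A_n\mid n\in\omega\rangle$ and passing to the relative topology of $I$, the sets $A_n\cap I$ are pairwise disjoint, relatively closed, and cover $I$, so it suffices to show that one of them equals $I$. I would therefore assume towards a contradiction that at least two of them are nonempty.

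Next I would isolate the part of $I$ where the decomposition is genuinely seamed. Let $D=I\setminus\bigcup_n \mathrm{int}(A_n\cap I)$, where the interior is taken relative to $I$. This is relatively closed, and it is nonempty: otherwise $I$ would be the union of the pairwise disjoint relatively open sets $\mathrm{int}(A_n\cap I)$, and connectedness of $I$ would force a single one to equal $I$, contradicting that two of the sets are nonempty. A short point-set check shows $D=\bigcup_n \partial_I(A_n\cap I)$, each summand being relatively closed in $I$. Since $I$ is completely metrizable, its closed subset $D$ is a nonempty Baire space, so the Baire category theorem yields some $n_0$ for which $\partial_I(A_{n_0}\cap I)$ has nonempty interior in $D$; that is, there is an open interval $V$ with $\emptyset\neq V\cap D\subseteq \partial_I(A_{n_0}\cap I)$.

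The endgame, which is the real content, extracts a single point lying in two of the disjoint sets. Fix $x_0\in V\cap D\subseteq \partial_I(A_{n_0}\cap I)$; since $x_0$ is a boundary point of $A_{n_0}$, the neighborhood $V$ contains some $y\notin A_{n_0}$, and $y$ then lies in $\mathrm{int}(A_m\cap I)$ for the unique $m\neq n_0$ with $y\in A_m$. Working on the subinterval of $V$ between $x_0$ and $y$ (say $x_0<y$), set $s=\sup\{t\in[x_0,y]\mid t\in A_{n_0}\}$, so that $s\in A_{n_0}$ and $s<y$. By the choice of $V$ the half-open interval $(s,y]$ misses $A_{n_0}$ and hence misses $D$, so it is contained in $\bigcup_n \mathrm{int}(A_n\cap I)$; as $(s,y]$ is connected and these interiors are pairwise disjoint open sets, it lies entirely in $\mathrm{int}(A_m\cap I)$, the piece containing $y$. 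Letting $t\downarrow s$ through $(s,y]\subseteq A_m$ and using that $A_m\cap I$ is relatively closed gives $s\in A_m$, contradicting $s\in A_{n_0}$ and the disjointness of the family. The main obstacle is precisely this combination of the Baire category theorem with the connectedness of intervals, which is the heart of Sierpi\'nski's theorem; the passage to the relative topology and the point-set verifications around it are routine.
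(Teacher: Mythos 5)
Your proof is correct, but it takes a genuinely different route from the one in the paper. The paper argues directly and elementarily: assuming no single $A_n$ contains $I$, it builds by hand a nested sequence of open subintervals $I_k$ of $I$ with $\mathrm{cl}(I_{k+1})\subseteq I_k$ such that $I_k$ avoids $A_0\cup\dots\cup A_{n_k}$ for a strictly increasing sequence $n_k$ (using the disjointness of the family to keep finding room to shrink), so that the intersection of the $I_k$ is a nonempty subset of $I$ missed by every $A_n$ --- a contradiction with $I\subseteq\bigcup_n A_n$. This is ``Baire category done by hand,'' aimed directly at the covering family, and it needs nothing beyond the nested-interval property. You instead run the classical Sierpi\'nski argument: you isolate the seam set $D=\bigcup_n\partial_I(A_n\cap I)$, check it is nonempty and relatively closed, invoke the Baire category theorem in $D$ to find an $n_0$ whose boundary has interior in $D$, and then use connectedness of a subinterval $(s,y]$ together with the disjointness of the interiors to manufacture a point lying in two of the sets. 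All the steps I checked go through (in particular the identity $D=\bigcup_n\partial_I(A_n\cap I)$, the nonemptiness of $D$ via connectedness, and the final limit argument $s\in A_{n_0}\cap A_m$); the only points left implicit are routine, namely shrinking $V$ so that $V\subseteq I$ and noting that $y\notin D$ because $V\cap D\subseteq A_{n_0}$. What each approach buys: the paper's construction is fully self-contained and avoids quoting BCT, at the cost of a somewhat fiddly induction; your version outsources the recursion to the Baire category theorem and makes the structural reason for the result (the boundary set would have to be a nonempty countable closed-in-itself Baire space with no relatively isolated pieces of a single $A_n$ covering it) more visible, which is closer in spirit to the alternative proof sketched in the paper's footnote.
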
 
\begin{proof} 
Since the claim is clear for finite sequences, we assume that $\vec{A}=\langle A_n\mid n\in\omega\rangle$ and further let $A=\bigcup_{n\in\omega} A_n$ and  $A^n=\bigcup_{k\leq n}A_k$ for $n\in\omega$. 
Assuming that the claim fails, we will construct a sequence $\vec{I}=\langle I_k \mid k \in \omega\rangle$ of open subintervals of $I$ and a strictly increasing sequence $\vec{n}=\langle n_k \mid k \in \omega\rangle$ of natural numbers with the following properties by induction. 
\begin{enumerate-(a)}
\item \label{interval in union of closed sets 2} 
$I_k \cap A^{n_k}= \emptyset$ 
\item \label{interval in union of closed sets 4} 
$\cl(I_k)\cap A_{n_k}\neq \emptyset$ 
\item \label{interval in union of closed sets 3} 
$\cl(I_{k+1})\subseteq I_{k}$ 
\end{enumerate-(a)}
Since $I$ is a subset of $A$, there is some $n\in\omega$ with $I\cap A_n \neq \emptyset$ and we let $n_0$ be the least such. Since we assumed that $I\not\subseteq A_n$ for all $n\in\omega$, we have $I\setminus A_{n_0}\neq\emptyset$ and therefore there is a subinterval $I_0$ of $I$ with $\cl(I_0)\subseteq I$ that is disjoint from $A_{n_0}$ with an end point in $A_{n_0}$. 
Now assume that both $I_k$ and $n_k$ are defined. 
Since $I_k$ is contained in $A$ 
and $I_k \cap A^{n_k}= \emptyset$ by \ref{interval in union of closed sets 2}, there is some $n>n_k$ with $I_k \cap A_n  \neq \emptyset$ and we let $n_{k+1}$ be the least such. 
Since one of the end points of $I_k$ is an element of $A_i$ for some $i\leq n_k$ by \ref{interval in union of closed sets 4} and the elements of $\vec{A}$ are pairwise disjoint, 
we have $I_k\setminus A_{n_{k+1}}\neq\emptyset$. 
Hence there is a subinterval $I_{k+1}$ of $I_k$ with $\cl(I_{k+1})\subseteq I_k$ that is disjoint from $A^{n_{k+1}}$ with an end point in $A_{n_{k+1}}$. 
Finally, the intersection of $\vec{I}$ is disjoint from $A$ by \ref{interval in union of closed sets 2} and a nonempty subset of $I$ by \ref{interval in union of closed sets 3}. However, this contradicts the assumption that $I$ is a subset of $A$. 
%
\end{proof}

\begin{Thm}\label{prop:dec_fsigma_I} 
A non-trivial $F_\sigma$ subset $A$ of $\RR$ satisfies $(\text{I}_0)$ if and only if it is the union of a sequence $\vec{A}$ of pairwise disjoint closed sets. 
\end{Thm}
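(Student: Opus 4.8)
The plan is to prove the two implications separately, with the reverse being short. For the reverse implication, suppose $A=\bigcup_n A_n$ is a union of pairwise disjoint closed sets and let $J\subseteq A$ be a non-degenerate interval contained in $A$. Its interior $I$ is a nonempty open interval with $I\subseteq A$, so Fact \ref{subinterval of union of disjoint closed sets} gives $I\subseteq A_n$ for a single $n$; since $A_n$ is closed we get $\cl(I)\subseteq A_n\subseteq A$, and in particular the two end points of $J$ lie in $A$. As degenerate intervals trivially have their end point in $A$, the set $A$ satisfies $(\text{I}_0)$.

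For the forward implication, assume $A$ is a non-trivial $F_\sigma$ set satisfying $(\text{I}_0)$, and write $A=\bigcup_n F_n$ with each $F_n$ closed. Using $(\text{I}_0)$ I would first show that every non-degenerate connected component of $A$ is a closed interval: if such a component $C$ omitted an end point $x$, then $C\subseteq A$ would be an interval with end point $x\notin A$, against $(\text{I}_0)$; and once $x\in A$, maximality of $C$ is violated by $C\cup\{x\}$. As pairwise disjoint non-degenerate intervals are countable, I list these components as $\langle I_j\mid j\in\omega\rangle$. Now choose a continuous nondecreasing surjection $q\colon\RR\to\RR$ that is constant on each $I_j$ and strictly increasing off $\bigcup_j I_j$. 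Such a $q$ is proper, hence a closed map, so $A'\colon=q(A)=\bigcup_n q(F_n)$ is again $F_\sigma$; moreover $A'$ has empty interior, since the $q$-preimage of any interval in $A'$ would be an interval contained in $q^{-1}(A')=A$ and hence inside some $I_j$, forcing that interval to be a point. Because the nontrivial fibres of $q$ are exactly the sets $I_j\subseteq A$, we have $q^{-1}(A')=A$, so $q^{-1}$ turns any decomposition of $A'$ into pairwise disjoint closed sets into one of $A$. It therefore suffices to decompose the zero-dimensional $F_\sigma$ set $A'$.

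To decompose a zero-dimensional $F_\sigma$ set $A'=\bigcup_n K_n$ with each $K_n$ closed, I would fix a countable dense set $Q\subseteq\RR$ disjoint from $A'$, which exists because $A'$ has empty interior. The point where care is needed---indeed the main obstacle---is to obtain pieces that are closed in $\RR$ rather than merely relatively closed in $A'$, and this is exactly what $Q\cap A'=\emptyset$ secures, since intersecting with closed intervals whose end points lie in $Q$ keeps us inside the $\RR$-closed sets $K_n$. Concretely, I would disjointify into the locally closed sets $G_n=K_n\setminus\bigcup_{m<n}K_m$, write each connected component $(c,d)$ of the open set $\RR\setminus\bigcup_{m<n}K_m$ as $\bigcup_{l\in\ZZ}[p_l,p_{l+1}]$ for an increasing sequence $\langle p_l\mid l\in\ZZ\rangle$ in $Q\cap(c,d)$ with $p_l\to c$ and $p_l\to d$, and take the pieces $K_n\cap[p_l,p_{l+1}]$. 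Each such piece is closed in $\RR$; their union over all $n$, all components and all $l$ is $\bigcup_n G_n=A'$; and they are pairwise disjoint, since consecutive intervals meet only in the points $p_{l+1}\in Q$, which lie outside $A'\supseteq K_n$. Alternatively one may transport $A'$ onto an $F_\sigma$ subset of ${}^\omega\omega$ by the tree-of-intervals homeomorphism built from $Q$ and invoke Fact \ref{lem:Baire space F_sigma}, the disjointness of $Q$ from $A'$ again ensuring that the resulting pieces are genuinely $\RR$-closed. Pulling the resulting sequence back through $q$ then completes the decomposition of $A$.
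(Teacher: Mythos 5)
Your proof is correct, and your backward direction coincides with the paper's (both rest on Fact \ref{subinterval of union of disjoint closed sets}). For the forward direction you follow the same overall strategy as the paper --- use $(\text{I}_0)$ to see that the non-degenerate components of $A$ are closed intervals, collapse them by a monotone map so that the image misses a countable dense set, decompose the image into disjoint closed pieces, and pull back --- but your execution differs at both stages. Where the paper builds its collapsing map by an explicit induction that sends each component of $A$, together with interleaved singletons from the complement, to carefully chosen rationals (and must then argue that the union of these partial maps extends continuously), you obtain the map abstractly as the order quotient by the components; this is cleaner and sidesteps the continuity verification, though note that if $A$ has an unbounded component the quotient is a proper closed subinterval of $\RR$ rather than all of $\RR$, so $q$ is a closed map but not literally a proper surjection onto $\RR$ --- a cosmetic adjustment that does not affect the argument. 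And where the paper transports the image to a copy of the Baire space and invokes Fact \ref{lem:Baire space F_sigma}, you disjointify by hand, cutting each $G_n=K_n\setminus\bigcup_{m<n}K_m$ along a grid of closed intervals with end points in the countable dense set $Q$ disjoint from $A'$; this is more elementary and makes visible exactly where the hypothesis is used (the cut points must avoid $A'$, which is why one first needs the image to have empty interior). What the paper's longer route buys is an explicit increasing reduction of $A$ onto a nowhere dense set, a construction that is deliberately set up as a template and reused in more elaborate form in Theorem \ref{lem:red_Q_I} and Lemmas \ref{lem:red_closed} and \ref{prop:red_fsigma}.
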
 
\begin{proof} 
We first assume that $A$ is the union of a sequence $\vec{A}$ of pairwise disjoint closed sets. If $I$ is an open interval that is contained in $A$, then it is contained in one of the sets by Fact \ref{subinterval of union of disjoint closed sets} and therefore its end points are in $A$, thus proving $(\text{I}_0)$. 

Assuming conversely that $A$ satisfies $(\text{I}_0)$, we construct a reduction $f$ on $\RR$ of $A$ such that $f(A)$ is disjoint from a countable dense subset of $\RR$. 
To this end, let $\vec{A}=\langle A_n\mid n\in\omega\rangle$ enumerate all non-trivial connected components of $A$ and singletons that come from a countable set that is disjoint from $A$ such that $\bigcup\vec{A}$ is dense in $\RR$ and there is such a singleton between any two elements of $\vec{A}$. 
We have that all connected components of $A$ are closed by the assumption $(\text{I}_0)$. 
We now define an increasing sequence $\vec{f}=\langle f_n\mid n\in\omega\rangle$ of increasing functions. 
To this end, let $\langle q_n\mid n\in\omega\rangle$ enumerate $\QQ$ without repetitions 
and begin by letting $f_0=c_{q_0}{\upharpoonright}A_0$. 
In the following, we will write $f_n(\infty)=\infty$ and $f_n(-\infty)=-\infty$ for all $n\in\omega$ for notational convenience. 
To define $f_n$ for $n=m+1$, let $x$ be the largest element of $\dom(f_m)$ below $A_n$ if it exist and $-\infty$ otherwise and further $y$ the least element of $\dom(f_m)$ above $A_n$ of this exists and $\infty$ otherwise. 
Now let $k_n$ be least with $q_{k_n}\in (f_m(x),f_m(y))$ and extend $f_m$ to $f_n$ by mapping $A_n$ to $q_{k_n}$. 
Finally, let $f$ denote the union of the sequence $\vec{f}$. 
It is clear from the construction that $f$ is an increasing function with a unique continuous extension $f^\star$ to $\RR$ 
and we further claim that $f^\star$ is a reduction of $A$ to $f^\star(A)$. This follows immediately from the fact that $f^\star$ is strictly increasing with respect to elements of different sets in $\vec{A}$ by the construction. 
It is also clear that $f^\star(A)$ is disjoint from a countable dense subset of $\QQ$ 
and thus the complement $B$ of this set is homeomorphic to the Baire space. 
Assuming that $\vec{C}=\langle C_n\mid n\in\omega\rangle$ is a sequence of compact sets with union $A$, there is a sequence $\vec{D}=\langle D_n\mid n\in\omega\rangle$ of pairwise disjoint relatively closed subsets of $B$ that refines $\langle f^\star(C_n)\mid n\in\omega\rangle$ by Fact \ref{lem:Baire space F_sigma}. 
We thus obtain the required decomposition by pulling back these sets via $f^\star$. 
\end{proof}

The idea of the previous construction is taken up again in the proofs of Theorem \ref{lem:red_Q_I} and Lemmas \ref{lem:red_closed} and \ref{prop:red_fsigma} below in more complex settings.



We now aim to characterize the sets that are reducible to $\QQ$ by the condition $(\text{I})$. We first recall that by Lemma \ref{I}, for any non-trivial set $A$ the condition $(\text{I})$ is equivalent to the statement that no non-trivial open or closed sets reduce to $A$. 
Moreover, the next result shows that $(\text{I})$ is not easy to satisfy.

\begin{Lem} \label{non delta02} 
The condition $(\text{I})$ fails for every non-trivial $\mathbf{\Delta}^0_2$ subset $A$ of $\RR$. 
\end{Lem}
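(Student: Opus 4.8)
The plan is to exploit the defining feature of $\mathbf{\Delta}^0_2$, namely that $A\in\mathbf{\Delta}^0_2=\mathbf{\Sigma}^0_2\cap\mathbf{\Pi}^0_2$ exactly when both $A$ and its complement are $F_\sigma$. So I would fix decompositions $A=\bigcup_n F_n$ and $\RR\setminus A=\bigcup_n G_n$ into closed sets. Since $A$ is non-trivial and $\RR$ is connected, the boundary $\partial A$ is a nonempty closed (hence completely metrizable) subset of $\RR$, and every point of $\partial A$ lies either in $A$ or in its complement. Intersecting the two decompositions with $\partial A$ therefore writes $\partial A=\bigcup_n(F_n\cap\partial A)\cup\bigcup_n(G_n\cap\partial A)$ as a countable union of relatively closed sets, and the first step is to apply the Baire category theorem to this covering of the Polish space $\partial A$.

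By the symmetry between $A$ and its complement, I may assume that some $F_{n_0}\cap\partial A$ has nonempty interior relative to $\partial A$; that is, there is an open interval $J$ with $\emptyset\neq J\cap\partial A\subseteq F_{n_0}\subseteq A$. The crucial consequence is that inside $J$ the complement of $A$ becomes open: since $J\cap\partial A\subseteq A$, no point of $J\cap(\RR\setminus A)$ can be a boundary point, so $W:=J\cap(\RR\setminus A)$ is contained in the interior of $\RR\setminus A$ and is therefore open. Picking a boundary point $p\in J\cap\partial A$, the fact that $p\in\cl(\RR\setminus A)$ shows $W\neq\emptyset$, while $p\in J\cap A$ shows $W\neq J$. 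Thus $W$ is a nonempty proper open subset of the interval $J$, so it has a connected component $(c,d)$ with an endpoint, say $d$, lying strictly inside $J$. Then $d\in J\setminus W=J\cap A$, so $(c,d)\subseteq\RR\setminus A$ is an interval one of whose endpoints fails to lie in $\RR\setminus A$; this contradicts $(\text{I}_0)$ for the complement of $A$, i.e. it contradicts $(\text{I}_1)$. In the symmetric case, where instead some $G_{n_0}\cap\partial A$ is relatively open, the same argument with $A$ and $\RR\setminus A$ interchanged violates $(\text{I}_0)$ for $A$; either way $(\text{I})$ fails.

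The one step needing care is extracting a component of $W$ with an endpoint interior to $J$: this is where I use that $W$ is nonempty and proper, so it cannot be a single component spanning all of $J$, and some component must have a finite endpoint strictly between the endpoints of $J$. I expect this to be the main (though mild) obstacle, together with making explicit that the genuinely $\mathbf{\Delta}^0_2$ hypothesis — rather than merely $F_\sigma$ — is exactly what lets me write $\partial A$ as a countable union of closed sets. Indeed $\QQ$ satisfies $(\text{I})$ and is $F_\sigma$ but not $\mathbf{\Delta}^0_2$, so some essential use of the two-sided $F_\sigma$ decomposition is unavoidable.
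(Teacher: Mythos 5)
Your proof is correct and follows essentially the same route as the paper's: apply the Baire category theorem to the Polish space $\partial A$ to find an open interval $J$ in which all boundary points of $A$ lie on one side, and then extract a maximal subinterval of the complement (resp.\ of $A$) in $J$ whose endpoint witnesses the failure of $(\text{I}_1)$ (resp.\ $(\text{I}_0)$). The only cosmetic difference is in the Baire step: you cover $\partial A$ by the relatively closed pieces of the two $F_\sigma$ decompositions, while the paper notes that $\partial A\cap A$ and $\partial A\setminus A$ are disjoint $\mathbf{\Delta}^0_2$ subsets of $\partial A$ and concludes that one of them is not dense -- both arguments land on the same interval $J$ and the rest is identical.
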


\begin{proof}
Towards a contradiction, assume that $(\text{I})$ holds for $A$. Since the sets $\partial A \setminus A$ and $\partial A\cap A$ are disjoint $\mathbf{\Delta}^0_2$ subset of $\partial A$, by the Baire category theorem for $\partial A$ one of them is not dense in $\partial A$. \footnote{We would like to thank the referee for simplifying this step of the proof.}  
Assuming that $\partial A\setminus A$ is not dense in $\partial A$, there is an open interval $I$ with $\partial A\cap I$ nonempty and contained in $A$. 
It follows that $A\cap I$ is closed in $I$, since any limit of $A\cap I$ in $I\setminus A$ would be an element of $\partial A\cap I\subseteq A$. 
Moreover, both $A\cap I$ and $I\setminus A$ are nonempty and hence there is a maximal subinterval of $I$ that is disjoint from $A$ with one of its end points in $A$. 
However, this shows that $A$ does not satisfy $(\text{I}_1)$ and thus contradicts our assumption. 
Since $(\text{I})$ also holds for the complement of $A$ by our assumption, we also obtain a contradiction in the case that $\partial A\cap A$ is dense in $A$. 
\end{proof}

\begin{Thm} \label{lem:red_Q_I} 
A non-trivial $F_\sigma$ subset of $\RR$ satisfies $(\text{I})$ if and only if it is reducible to $\QQ$. 
\end{Thm}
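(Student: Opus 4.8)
The plan is to prove both implications, treating the easy direction first. For the direction from reducibility to the condition, recall that $\QQ$ is a countable dense set and hence satisfies $(\text{I})$, and that $(\text{I})$ is stable under continuous preimages, as noted after Definition \ref{definition: condition I}. Thus if $A=f^{-1}(\QQ)$ for a continuous $f$, then $A$ satisfies $(\text{I})$, and it is automatically $F_\sigma$ since $\QQ$ is; as $A$ is assumed non-trivial, this settles one direction.

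For the converse, suppose $A$ is a non-trivial $F_\sigma$ set satisfying $(\text{I})$. First I would \emph{collapse the interval components}. By $(\text{I}_0)$ every maximal non-trivial interval contained in $A$ is closed, and by $(\text{I}_1)$ every maximal non-trivial interval contained in $\RR\setminus A$ has both end points in $\RR\setminus A$. Let $g\colon\RR\to\RR$ be the increasing continuous surjection that is constant on each such maximal interval and strictly increasing off their union; since $g$ is monotone and proper it is a closed map, so the image $A'=g(A)$ is again $F_\sigma$. A short fibre analysis shows $g^{-1}(A')=A$: the fibre over a collapsed gap of $A$ is contained in $\RR\setminus A$ by $(\text{I}_1)$, the fibre over a collapsed interval of $A$ is contained in $A$ by $(\text{I}_0)$, and $g$ is injective on the remaining ``dust''. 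Hence $g$ reduces $A$ to $A'$. By construction $A'$ contains no non-trivial interval and neither does its complement, so both $A'$ and $\RR\setminus A'$ are dense; in particular $A'$ is a dense meager $F_\sigma$ set. Replacing $A$ by $A'$, I may therefore assume that both $A$ and $\RR\setminus A$ are dense.

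In this dense case I would mimic the construction in the proof of Theorem \ref{prop:dec_fsigma_I}, now building a continuous $f$ that sends $A$ into the dyadic rationals $\QQ_2$ and $\RR\setminus A$ into its complement. Using Theorem \ref{prop:dec_fsigma_I} together with Fact \ref{lem:Baire space F_sigma} I would decompose $A$ as a disjoint union $\bigsqcup_n C_n$ of closed nowhere dense sets (nowhere dense since $A$ now has empty interior), organize the reals into a Cantor scheme $\langle I_s\mid s\in{}^{<\omega}2\rangle$ of closed intervals adapted to this decomposition, and define $f$ as the limit of piecewise-affine devil's-staircase approximations built on the scheme, in the spirit of the Cantor function $f_c$. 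The scheme is arranged so that a point receives a \emph{finite} binary address, hence a dyadic value, exactly when it is eventually trapped in some $C_n$, that is, exactly when it lies in $A$, while every point of the comeager $G_\delta$ set $\RR\setminus A$ receives an infinite address and hence a non-dyadic value. Since $\QQ_2$ is a countable dense set it is Wadge equivalent to $\QQ$, so composing yields a reduction of $A$ to $\QQ$.

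The main obstacle is precisely this last construction in the dense case: one must guarantee simultaneously that $f$ is continuous and that the two halves of $(\text{I})$ force the boundary points onto the correct side, so that no point of the comeager remainder accidentally acquires a dyadic value and no point of $A$ escapes $\QQ_2$. This is the step at which both $(\text{I}_0)$ and $(\text{I}_1)$, the disjointness supplied by Fact \ref{lem:Baire space F_sigma}, and the nowhere-density of the pieces $C_n$ all enter, the last being unavoidable because by Lemma \ref{non delta02} such an $A$ is genuinely $F_\sigma$ and not $\mathbf{\Delta}^0_2$. The bookkeeping that keeps the scheme consistent with a fixed enumeration of $\QQ_2$ is the technical heart, exactly as the choice of least admissible rationals in shrinking gaps was in Theorem \ref{prop:dec_fsigma_I}.
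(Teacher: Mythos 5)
Your easy direction is fine, and your preprocessing step is a genuinely different and essentially correct idea: collapsing the maximal closed subintervals of $A$ and of $\RR\setminus A$ to points does yield a monotone reduction of $A$ to a set $A'$ such that both $A'$ and its complement are dense, whereas the paper never performs this reduction and instead keeps the non-trivial components of the complement around, mapping them to a fixed countable dense set of irrationals. (Two small points you should still patch: if $A$ contains an unbounded maximal interval your $g$ is not surjective onto $\RR$ but only onto a ray, and you should verify that $A'$ is again a non-trivial $F_\sigma$ set -- both are routine.) If the dense case were done, transitivity of reducibility would finish the argument, so the reduction to the dense case is a legitimate simplification.

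The gap is in the dense case itself, which is where all the work in the paper's proof lives, and your sketch does not close it. Two concrete problems. First, the inference ``infinite binary address, hence a non-dyadic value'' is false: an infinite address that is eventually constant evaluates to a dyadic rational, so a point of $\RR\setminus A$ approached by pieces whose assigned values are $\tfrac12,\tfrac34,\tfrac78,\dots$ would be sent to $1\in\QQ_2$. Excluding exactly this is the content of the paper's ``fresh sequence'' argument: because each piece is assigned the \emph{least} not-yet-used rational available in its current gap, any rational that threatens to be a limit value would already have been consumed as the value of one of the approaching pieces, a contradiction. Your scheme has no analogous safeguard. Second, a Cantor scheme of closed intervals ``adapted to'' the decomposition $A=\bigsqcup_n C_n$ does not by itself give continuity: the sets $C_n$ are wild nowhere dense closed sets that may accumulate on one another and on points of $\RR\setminus A$ from both sides, and one must show that the values assigned to all pieces converging to a given point $x$ (whether $x$ lies in some $C_n$ or not) have a common limit. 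The paper handles this by distinguishing ``nested'' from ``fresh'' sequences of pieces and proving convergence separately in each case; you explicitly name this as ``the technical heart'' but do not supply it, and the devil's-staircase framing does not supply it automatically. As written, the proposal therefore reduces the theorem to its hardest step rather than proving it.
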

\begin{proof} 
The first implication follows from the fact that $(\text{I})$ is preserved under forming continuous preimages.

For the other implication, suppose that $A$ is a non-trivial $F_\sigma$ subset of $\RR$ that satisfies (I). 
We will construct an increasing sequence of partial functions on $\RR$ whose union has a unique continuous extension to $\RR$ that is a reduction of $A$ to $\QQ$. 
To this end, suppose that $\vec{p}=\langle p_i\mid i\in\omega\rangle$ and $\vec{q}=\langle q_i\mid i\in\omega\rangle$ enumerate $\QQ$ and a dense subset of $\mathbb{R} \setminus \mathbb{Q}$. 
By Theorem \ref{prop:dec_fsigma_I}, there is a partition $\vec{A}=\langle A_n \mid n\in \omega\rangle $ of $A$ into closed sets; 
we can further assume that each $A_n$ is either compact or unbounded and connected by subdividing these sets. 
Moreover, we pick a sequence $\vec{B}=\langle B_n \mid n\in \omega\rangle$ of distinct connected components of the complement $B$ of $A$ that includes all non-trivial ones; these are closed by condition $(\text{I}_1)$ for $A$. 
We further write $A^n=\bigcup_{m\leq n} A_m$ and $B^n=\bigcup_{m\leq n} B_m$. 
We now claim that we can assume that the convex closure of $A_{n+1}$ is disjoint from $A^n\cup B^n$ for all $n\in\omega$. 
This follows from the fact that we can partition each $A_{n+1}$ into finitely many components whose convex closures are disjoint from $A^n\cup B^n$, since we would otherwise obtain an element of $A_{n+1}\cap(A^n\cup B^n)$ by compactness.

We now construct an increasing sequence $\vec{f}=\langle f_n\mid n\in\omega\rangle$ of functions and a sequence $\vec{I}=\langle I_n\mid n\in\omega\rangle$ of open intervals with the following properties: 
the domain of each $f_n$ is the union of the elements of a finite set $\Gamma_n$ of sets of the form $A_i$ and $B_j$ that contains $A^n\cup B^n$ and the range of each $f_n$ is finite; we have $I_n\cap \{p_0,\dots,p_n\}=\emptyset$ and $\cl(I_k)\subseteq I_n$  for all $k\in\omega$ with $\inf_{I_k}\in I_n$. 
Moreover, we will write $f_n(\infty)=\infty$ and $f_n(-\infty)=-\infty$ for all $n\in\omega$ 
for notational convenience. 
We begin the construction by letting $f_0{\upharpoonright}A_0=c_{p_0}{\upharpoonright}A_0$ and $f_0{\upharpoonright}B_0=c_{q_0}{\upharpoonright}B_0$. 
We further let $\vec{I}$ initially denote the empty sequence and will extend it in each step. 
Assuming that $n=m+1$, we first let $f_n=f_m$, $\Gamma_n=\Gamma_m$ and extend 
them 
by applying the following step successively to finitely many closed sets $C$.

\setcounter{Step}{0} 

\begin{Step*} 
Let $x$ be the largest element of $\dom(f_n)$ below $\min_C$ if this exists and $-\infty$ otherwise; 
further let $y$ be the least element of $\dom(f_n)$ above $\max_C$ if this exists and $\infty$ otherwise. 
Let $\vec{r}=\langle r_k\mid k\in\omega\rangle$ be equal to $\vec{p}$ if $C$ is a subset of $A$ and  equal to $\vec{q}$ otherwise. 
\begin{itemize} 
\item[$\sqbullet$]
If $f_n(x)\neq f_n(y)$, then we pick the least $k\in\omega$ with $r_k\in (f_n(x),f_n(y))^\star$ and let $f_n{\upharpoonright}C=c_{r_k}{\upharpoonright}C$ 
\item[$\sqbullet$]
If $f_n(x)=f_n(y)$, 
then we choose an open interval $I$ with minimum $f_n(x)$ that has length at most $\frac{1}{n}$, is disjoint from 
$\{q_0,\dots, q_i\}$ for the current length $i$ of $\vec{I}$ and 
whose closure is contained in any element of $\vec{I}$ that contains $f_n(x)$; 
then pick the least $k\in\omega$ with $r_k\in I$ and let $f_n{\upharpoonright}C=c_{r_k}{\upharpoonright}C$ 
\end{itemize} 
\end{Step*}

We first perform this step successively for each set $A_n$ and $B_n$ whose value is not yet defined. We then consider each maximal open interval 
between elements of $\Gamma_m\cup\{A_n,B_n\}$ separately; for each choose the least $k$ with $A_k$ contained in it and apply the previous step. Moreover, we update $\Gamma_n$ and in each step $\vec{I}$ by adding the new sets and intervals. 
Finally, $f$ is defined as the union of $\vec{f}$ and $s_n$ and $t_n$ denote the values of $A_n$ and $B_n$ for all $n\in\omega$.

Before we prove that $f$ has the required properties, we introduce the following terminology. 
We begin by fixing $\vec{m}=\langle m_i\mid i\in\omega\rangle$ and $\vec{n}=\langle n_i\mid i\in\omega\rangle$ as notations for strictly increasing sequences and will write $A(\vec{m})$ 
for the sequence $\langle A_{m_i}\mid i\in\omega\rangle$. 
\begin{itemize} 
\item[$\sqbullet$]
The \emph{critical number} of a set or a real number is the largest $i\in\omega$ such that this is contained in $\conv (A_i)$ 
and $-1$ if there is no such number 
\item[$\sqbullet$]
A set $A_i$ is \emph{fresh at $x$} if no real number strictly between $A_i$ and $x$ is in the domain at the stage where the value $s_i$ of $A_i$ is defined 
\item[$\sqbullet$]
A sequence $A(\vec{n})$ is \emph{fresh at $x$} if is strictly monotone, converges to $x$ and consists solely of fresh sets at $x$ with the same critical number that are listed in the same order as they appear in the construction 
\item[$\sqbullet$]
A sequence $A(\vec{n})$ is \emph{nested at $x$} if $A_{n_{i+1}}\subseteq \conv(A_{n_i})$ and $x\in\conv(A_{n_i})$ for all $i\in\omega$ 
\end{itemize} 
We will further use the following facts that follow immediately from the construction above. 
\begin{itemize} 
\item[$\sqbullet$]
No set $A_i$ or $B_j$ is contained in the convex closure of a set that occurs later in the construction 
\item[$\sqbullet$]
If $f$ maps some set to an interval $I$ in $\vec{I}$, then it maps its convex closure to $I$ as well 
\item[$\sqbullet$]
$f$ is strictly monotone with respect to elements of distinct sets of the form $A_i$ or $B_j$ with the same critical number 
\item[$\sqbullet$]
If a sequence $A(\vec{n})$ or $B(\vec{n})$ converges to an element of $A\cup B$ and has constant critical number, then $f$ is strictly monotone with respect to elements of distinct sets in this sequence and its limit point 
\item[$\sqbullet$]
If there is no nested sequence at some $x\in\partial A$, then there is a fresh sequence $A(\vec{n})$ at $x$; if additionally $x\in A \cup B$, then we can choose $A(\vec{n})$ on any side of $x$ where it is approachable 
\end{itemize}

\begin{Claim*} 
$f$ has a unique continuous extension $f^\star$ to $\RR$. 
\end{Claim*} 
\begin{proof} 
It is sufficient to show that for any $x\in\partial A$, there is a unique value that is equal to $\lim_{i\rightarrow\infty} s_{m_i}=\lim_{i\rightarrow\infty} t_{n_i}$ for all sequences $A(\vec{m})$ and $B(\vec{n})$ that converge to $x$. 
If there is a nested sequence $A(\vec{m})$ at $x$, then $f$ maps the sets $\conv(A_{m_i})$ to a nested sequence of intervals whose lengths converge to $0$ 
and hence all other sequences that are considered in the statement converge to $\lim_{i\rightarrow\infty}s_{m_i}$ as well. 
We can thus assume that this condition fails.

We first assume that $x\in A\cup B$ and will prove that $f(x)$ is the required value. To this end, it suffices to take $A(\vec{m})$ converging from below, since a similar proof 
works for the remaining cases. 
We can further assume that the critical number of $A(\vec{m})$ is constant by passing to a tail of $\vec{m}$. 
Since there is no nested sequence at $x$, there is a 
fresh sequence 
$A(\vec{n})$ 
at $x$ 
that converges from below. 
Since $f$ is strictly monotone with respect to each of the sequences $A(\vec{m})$ and $A(\vec{n})$, 
we can further assume that 
the sets in the sequences 
$A(\vec{m})$ and $A(\vec{n})$ are pairwise disjoint 
by thinning out $\vec{m}$ and $\vec{n}$. 
Then $f$ is strictly monotone simultaneously with respect to both sequences  
and their limit $x$; 
we can assume that it is strictly increasing and thus 
$\lim_{i\rightarrow\infty} s_{m_i}=\lim_{i\rightarrow\infty} s_{n_i}\leq f(x)$. 
Towards a contradiction, suppose that this inequality is strict and pick the least $j\in\omega$ with $\lim_{i\rightarrow\infty} s_{n_i}\leq p_j< f(x)$. 
Let $p_{k_0},\dots, p_{k_j}$ denote the images of $A_{n_0}, \dots,  A_{n_j}$; these values are strictly increasing and since these sets are fresh at $x$, the values $k_0,\dots,k_j$ are also strictly increasing. 
Then $s_{k_i}$ 
is necessarily chosen as $p_j$ in the construction for the least $i\leq j$ with $k_i\geq j$, but this contradicts the fact that $f$ is strictly increasing on $A(\vec{n})$. 

We now assume that $x\notin A\cup B$. 
Since the limits in the statement are equal for sequences with the same direction, it is sufficient to prove that $\lim_{i\rightarrow\infty} s_{m_i}=\lim_{i\rightarrow\infty} s_{n_i}$ for sequences $A(\vec{m})$ converging from below and $A(\vec{n})$ from above; the proof is similar for the remaining cases. 
Since there is no nested sequence at $x$, there is a fresh sequence at $x$; we can assume that it converges from below and moreover, since the required limit is equal to that of the values of $A(\vec{m})$, we can assume that $A(\vec{m})$ is already fresh. 
Since we can further assume that 
the sets in $A(\vec{m})$ and $A(\vec{n})$ are pairwise disjoint and have the same critical number by thinning out $\vec{m}$ and $\vec{n}$, we have that $f$ is strictly monotone with respect to distinct elements of $A(\vec{m})$ and $A(\vec{n})$; we can assume that it is strictly increasing and $\lim_{i\rightarrow\infty} s_{m_i}\leq \lim_{i\rightarrow\infty} s_{n_i}$. 
Towards a contradiction, suppose that this inequality is strict and 
pick the least $j$ with $\lim_{i\rightarrow\infty} s_{m_i}\leq p_j \leq\lim_{i\rightarrow\infty} s_{n_i}$. 
Let $p_{k_0},\dots, p_{k_j}$ denote the images of $A_{m_0}, \dots,  A_{m_j}$; these values are strictly increasing and since these sets are fresh at $x$, the values $k_0,\dots,k_j$ are also strictly increasing. 
Hence $s_{k_i}$ is chosen as $p_j$ for the least $i\leq j$ with $k_i\geq j$, contradicting the assumptions. 
\end{proof}

It remains to show that $f^\star$ is a reduction of $A$ to $\QQ$. 
The construction guarantees that $f(A)$ is a subset of $\QQ$ and $f(B)$ is disjoint from $\QQ$. 
Towards a contradiction, suppose that there is some $x\notin C$ and some $j\in\omega$ with $f^\star(x)=p_j$. 
If there is a nested sequence $A(\vec{n})$ at $x$, then $f$ maps $\conv(A_{n_i})$ to an interval $I$ whose closure is disjoint from $\{p_0,\dots,p_j\}$ if $i\in\omega$ is  sufficiently large, 
but this contradicts the choice of $p_j$. 
Assuming that there is no nested sequence, 
we pick a fresh sequence $A(\vec{n})$ and moreover assume that it converges from below and that $f$ is strictly increasing with respect to sets in $A(\vec{n})$.
Now 
let $p_{k_0},\dots, p_{k_j}$ denote the images of $A_{n_0}, \dots,  A_{n_j}$; since these sets are fresh at $x$, the indices $k_0,\dots,k_j$ are also strictly increasing. 
Therefore $s_{k_i}$ is defined as $p_j$ in the construction for the least $i\leq j$ with $k_i\geq j$, contradicting the assumptions. 
\end{proof}

By Lemma \ref{I}, we obtain as a further equivalence to the conditions in Theorem \ref{lem:red_Q_I} that no non-trivial closed or open set is reducible to the $F_\sigma$ set that is considered there. 
Moreover, we can use Theorem \ref{lem:red_Q_I} as follows to classify the countable sets that satisfy $(\text{I}_1)$. First, any such set $A$ is reducible to $\QQ$ and thus clearly either Wadge equivalent to $\QQ$ or nowhere dense. 
Moreover, if $A$ is nowhere dense then $\partial A\cap A=\emptyset$. 
It can be easily shown that for any two such sets, there is a homeomorphism of $\RR$ that maps one of these sets to the other set by enumerating the elements and the maximal closed subintervals of the complement and performing a back-and-forth construction. 
In particular, these sets are Wadge equivalent.

\section{A minimal set below the rationals}\label{section minimal set}

In this section, we construct a minimal subset of $\RR$ below $\QQ$ and 
prove that it is uniquely determined up to Wadge equivalence. 
To this end, let $A$ denote a nonempty compact set with the following properties. 
\begin{itemize} 
\item[$\sqbullet$]
Any connected component 
is approachable (\emph{approachability}) 
\item[$\sqbullet$]
Any end point 
is contained in a subinterval (\emph{thick ends})
\item[$\sqbullet$]
There is a subinterval 
between any two distinct connected components 
(\emph{density of subintervals}) 
\end{itemize} 
In order to see that such a set exists, let $A$ denote the set of end points of the Cantor subset $A_c$ of $[0,1]$, $B$ a countable dense subset of $A_c\setminus A$ and $f$ an order isomorphism from $(0,1)$ onto an open interval such that $A\cup B$ is contained in $f(\QQ_2)$. It is then clear that $(f_c\circ f)^{-1}(A_c)$ has the stated properties. 
Moreover, it is easy to show by a back-and-forth construction that any two such sets are homeomorphic and in particular, any such set is homeomorphic to each of its components. 
For the next two results, we fix such a set 
with minimum $0$ and maximum $1$.


\begin{Lem} \label{lem:red_closed} 
If $B$ is a nonempty compact subset of $(0,1)$, then there is a function $f\colon [0,1]\rightarrow (0,1)$ with $f(\min_A)=\min_B$ and $f(\max_A)=\max_B$ that reduces $A$ to $B$ and is increasing on $A$. 
\end{Lem}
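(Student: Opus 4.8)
The plan is to construct $f$ by the same inductive technique used in Theorems \ref{prop:dec_fsigma_I} and \ref{lem:red_Q_I}: I will build an increasing sequence $\langle f_n\mid n\in\omega\rangle$ of partial functions whose union is increasing on $A$ and has a unique continuous extension $f^\star$ to $[0,1]$, and then check that $f^\star$ reduces $A$ to $B$. First I fix an enumeration $\langle A_n\mid n\in\omega\rangle$ of the connected components of $A$ (each a singleton or a closed subinterval) together with an enumeration $\langle G_n\mid n\in\omega\rangle$ of the connected components of $[0,1]\setminus A$. On the target side I fix a sequence $\langle p_i\mid i\in\omega\rangle$ that is dense in $B$ and a sequence $\langle q_i\mid i\in\omega\rangle$ that is dense in $(\min_B,\max_B)\setminus B$, reserving $\min_B$ for the leftmost component and $\max_B$ for the rightmost, so that the required boundary values $f(\min_A)=\min_B$ and $f(\max_A)=\max_B$ are built in from the start.

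At stage $n=m+1$ I extend $f_m$ to finitely many further components exactly as in the Step of the proof of Theorem \ref{lem:red_Q_I}: to a component $A_n\subseteq A$ I assign a value from $\langle p_i\rangle$ and to a gap $G_n$ a value from $\langle q_i\rangle$, in each case choosing the least available index lying in the open interval determined by the neighbouring values already defined, and I treat every maximal open interval between the sets handled so far, so that the domain grows to a dense subset of $[0,1]$. The one new feature compared with \ref{lem:red_Q_I} is that $B$ need not be nowhere dense: whenever $B$ has a nontrivial interval component $K$, I reserve one of the (by density of subintervals, abundant) subintervals of $A$ and map it affinely onto $K$, so that no gap of $A$ is ever forced to take a value in the interior of $B$. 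Here the structural hypotheses enter. Density of subintervals supplies enough interval components of $A$ to absorb the countably many interval components of $B$; thick ends guarantees that the endpoints of every gap of $A$ lie in such interval components, so the values $u\le v\in B$ at the two ends of a gap are controlled; and approachability lets me route $f$ across each gap through $(\min_B,\max_B)\setminus B$ — as a monotone passage when $(u,v)$ is a gap of $B$ and, when $u=v$, as a short excursion to one side, which exists precisely because $u$ is then approachable from the complement of $B$. All values stay in $[\min_B,\max_B]\subseteq(0,1)$, so $f^\star$ maps $[0,1]$ into $(0,1)$, and the construction keeps $f$ increasing on $A$.

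It then remains to prove the two claims that carry the argument, in direct analogy with the proof of Theorem \ref{lem:red_Q_I}. The first is that $f=\bigcup_{n}f_n$ has a unique continuous extension $f^\star$; this is shown by the same analysis of \emph{fresh} and \emph{nested} approximating sequences, where density of subintervals and approachability provide, at every boundary point of $A$, a fresh monotone sequence of components along which the assigned values converge, forcing all approximating sequences to share one limit. The second is that $f^\star$ reduces $A$ to $B$: points of $A$ map into $B$ because their values either lie in $B$ or are limits of such values and $B$ is closed; points in a gap map off $B$ by construction; and the crucial case of a point $x\in\partial A\setminus A$ is handled exactly as at the end of the proof of \ref{lem:red_Q_I}, using a fresh sequence at $x$ to contradict $f^\star(x)\in B$. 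I expect the main obstacle to be this continuity-and-reduction verification, and specifically the interplay forced by $B$ having nonempty interior: one must ensure simultaneously that the interval components of $A$ absorb those of $B$, that the residual gaps of $A$ are sent strictly outside $B$, and that the limit values glue continuously — it is precisely to make these three requirements compatible that the approachability, thick-ends and density-of-subintervals properties of $A$ are needed.
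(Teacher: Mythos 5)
Your overall template---an increasing union of partial functions that is increasing on $A$, followed by the fresh/nested-sequence analysis to obtain a unique continuous extension---is the same as the paper's, but the core value-assignment scheme has a genuine gap. You assign to each component of $A$ a point value chosen by the ``least available index lying in the interval determined by the neighbouring values'' from a sequence dense in $B$, and only afterwards try to route each gap $(s,t)$ of $A$ through the complement of $B$. That routing is not merely convenient but forced: by the intermediate value theorem $f((s,t))\supseteq (f(s),f(t))$, so the reduction property requires $(f(s),f(t))$ to be a single connected component of the complement of $B$ whenever $f(s)\neq f(t)$; and when $f(s)=f(t)=u$, the image of the open gap is an interval with endpoint $u$, so an actual gap of $B$ must abut $u$---being ``approachable from the complement of $B$'' is not enough, since if $u$ is a two-sided condensation point of a Cantor set $B$ every excursion from $u$ meets $B$. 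Your assignment rule guarantees neither alternative: for $B$ a Cantor set, the two interval components of $A$ flanking a common gap will typically receive values $u<v$ with $(u,v)\cap B\neq\emptyset$, and the gap between them then cannot be routed at all. The paper avoids this by making the gaps of $B$ primary objects of the matching: its Step 2B sends a gap of $A$ \emph{onto} a gap of $B$ while simultaneously mapping the two flanking interval components of $A$ (which exist by thick ends) onto the flanking maximal closed subintervals of $B$, or collapsing them to the gap's endpoints, so that the flanking values are by construction the endpoints of one gap of $B$; likewise its Step 2A maps interval components of $A$ \emph{onto} closed connected pieces of $B$ (points or intervals), rather than to points of a dense set. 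You correctly name the compatibility of ``absorb the intervals of $B$ / send residual gaps outside $B$ / glue continuously'' as the crux, but the mechanism you propose does not achieve it; the coordination has to be built into the assignment, not recovered afterwards.

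A smaller but real defect: $A$ has uncountably many connected components (all but countably many are singletons), so ``an enumeration $\langle A_n\mid n\in\omega\rangle$ of the connected components of $A$'' does not exist. You need a countable subfamily containing all interval components and dense in $A$ (the paper takes the closed subintervals of $A$ with both ends approachable), after which your closing observation that the remaining points of $A$ are handled as limits because $B$ is closed does the rest, exactly as in the paper.
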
 
\begin{proof} 
Since the claim is easy to see if $B$ is a finite union of intervals, we will assume otherwise. 
Let $\vec{A}=\langle A_n\mid n\in \omega\rangle$ be a list of all closed subintervals of $A$ with both ends approachable and $\vec{B}=\langle B_n\mid n\in\omega\rangle$ a list of pairwise disjoint closed connected subsets of $B$ that includes all maximal subintervals and is chosen such that $\bigcup \vec{B}$ dense in $B$. 
Moreover, let $\vec{C}$ and $\vec{D}$ enumerate all maximal open subintervals of the complements of $A$ and $B$, respectively.

We now construct an increasing sequence $\vec{f}=\langle f_n\mid n\in\omega\rangle$ of functions; we will show that its union has a unique continuous extension to the unit interval with the required properties. 
We begin by choosing 
an increasing function $f_0$ that maps the connected components of $\min_A$ and $\max_A$ in $A$ onto the connected components of $\min_B$ and $\max_B$ in $B$. 
Assuming that $n=m+1$, we first let $f_n=f_m$ and then extend $f_n$ by applying finitely many of the following steps in an order that is described below to maximal open subintervals $I=(x,y)$ of the unit interval that are disjoint from $\dom(f_n)$ and to the respective intervals $I_f=(f_n(x),f_n(y))$. 

\setcounter{Step}{0} 

\begin{Step} \label{closed step 1} 
If $I_f\cap B=\emptyset$, then we map $I\cap A$ to $f_n(x)$ and 
choose a continuous extension 
that sends $I\setminus A$ to $I_f$. 
\end{Step}

\begin{Step-2A} \label{closed step 2} 
If $i,j \in\omega$ are least with $A_i$ contained in $I$ and $B_j$ contained in $I_f$, then we map $A_i$ onto $B_j$ by an increasing function. 
\end{Step-2A}

\begin{Step-2B} \label{closed step 3} 
If $i,j \in\omega$ are least with $C_i$ contained in $I$ and $D_j$ contained in $I_f$ and moreover 
$X, Y$ and $X',Y'$ denote the maximal closed subintervals of $A$ and $B$ that are located directly below and above them, 
we proceed as follows. 
\begin{itemize} 
\item[$\sqbullet$]
$C_i$ is mapped onto $D_j$ by an increasing function 
\item[$\sqbullet$]
If $\min_{D_j}= f_n(x)$, then we send $X$ to this value and otherwise map it onto $X'$ by an increasing function; 
we then proceed in the same fashion for $\max_{D_j}$, $f_n(y)$, $Y$ and $Y'$ 
\end{itemize} 
\end{Step-2B} 
We first apply Step 1 to all intervals where it applies and 2A to all other intervals. After updating $f_n$, 
we then apply Steps 1 and 2B in the same fashion to obtain the final version of $f_n$. 
We finally define the function $f$ as $\bigcup \vec{f}$. 

We now show that this function has a unique continuous extension to the unit interval with the required properties. To this end, we 
write 
$\vec{n}=\langle n_i\mid i\in\omega\rangle$ for strictly increasing sequences and $A(\vec{n})$ for the sequence $\langle A_{n_i}\mid i\in\omega\rangle$ 
as in proof of Theorem \ref{lem:red_Q_I}; we further fix the following notation. 
\begin{itemize} 
\item[$\sqbullet$]
A set $A_i$ is \emph{fresh at $x$} if no real number strictly between $A_i$ and $x$ is in the domain at the stage where $f{\upharpoonright}A_i$ is defined 
\item[$\sqbullet$]
A sequence $A(\vec{n})$ is \emph{fresh at $x$} if is strictly monotone, converges to $x$ and consists only of fresh sets at $x$ that are listed in the same order as they appear in the construction 
\end{itemize} 
Moreover, we will use the following facts which result immediately from the above construction. 
\begin{itemize} 
\item[$\sqbullet$]
$\dom(f)$ is the union of $\bigcup\vec{A}$ with the subintervals of $A$ that contain end points of $A$ and its complement $\bigcup\vec{C}$; 
in particular it is dense in the unit interval
\item[$\sqbullet$]
$\ran(f)$ is the union of subsequences of $\vec{B}$ and $\vec{D}$ 
\item[$\sqbullet$]
$f$ 
is a partial reduction of $A$ to $B$ which maps all sets in $\vec{A}$ and $\vec{C}$ to sets in $\vec{B}$ and $\vec{D}$, respectively 
\item[$\sqbullet$]
$f{\upharpoonright}A$ is increasing;  
if moreover two connected components of $A$ 
have distinct images, then $f$ maps the open interval between them strictly between their images  
\item[$\sqbullet$]
Images of sequences with the same limit and direction converge to the same value; this follows from the previous item 
\item[$\sqbullet$]
If $x\in\dom(f)$ is approachable on one side with respect to $A$, then there is a fresh sequence at $x$ on the same side 
\item[$\sqbullet$]
If $x\notin \dom(f)$ is approachable from both sides with respect to $A$, then there is a fresh sequence at $x$ 
\end{itemize}

To show that $f$ is continuous, it is sufficient by the last fact to prove 
that $\lim_{i\rightarrow\infty} f(x_i)=f(x)$ for every strictly monotone sequence $\vec{x}=\langle x_i\mid i\in\omega\rangle$ in $A$ that converges to some $x\in\dom(f)$. We can further assume that both $\vec{x}$ and the sequence of its images are strictly increasing and $x$ is approachable from below, since the case in which $\vec{x}$ is decreasing is symmetric and the remaining cases are easy.  
We can thus pick a fresh sequence $A(\vec{n})$ that converges to $x$ from below. 
Now let $\vec{k}=\langle k_i\mid i\in\omega\rangle$ such that $A_{n_i}$ is mapped to $B_{k_i}$ for all $i\in\omega$; this sequence is strictly increasing since each $A_{n_i}$ is fresh at $x$. 
Since $f$ is increasing on $A$, the sets $B_{k_i}$ moreover converge to $\lim_{i\rightarrow\infty} f(x_i)$ 
and we now assume towards a contradiction 
that this value is strictly below $f(x)$. 
Since $B$ is closed, there is some $j\in\omega$ such that $B_j$ is located strictly between all 
$B_{k_i}$ and $f(x)$. We now let $j\in\omega$ be the least such number and let further $i\in\omega$ be least with $k_i\geq j$. Then the image $B_{k_i}$ of $A_{n_i}$ is necessarily chosen as $B_j$ in the construction, but this contradicts the choice of $B_j$ above the limit.

To prove that $f$ has a unique continuous extension $f^\star$ to the unit interval, it remains to show that $\lim_{i\rightarrow\infty} f(x_i)=\lim_{i\rightarrow\infty} f(y_i)$ for all strictly increasing and strictly decreasing sequences $\vec{x}=\langle x_i\mid i\in\omega\rangle$ and $\vec{y}=\langle y_i\mid i\in\omega\rangle$ in $A$ 
that converge to the same $x\notin \dom(f)$. 
We can further assume that the 
images of $\vec{x}$ and $\vec{y}$ are strictly increasing and decreasing and $x$ is approachable from both sides, since otherwise the claim 
follows from continuity of $f$. 
Let $A(\vec{n})$ be fresh at $x$; we can further assume that it is increasing, since the other case is symmetric. 
Now let $\vec{k}=\langle k_i\mid i\in\omega\rangle$ such that $A_{n_i}$ is mapped to $B_{k_i}$ for all $i\in\omega$; this sequence is strictly increasing since each $A_{n_i}$ is fresh at $x$. 
Since $f$ is increasing on $A$, the sets $B_{k_i}$ moreover converge to $\lim_{i\rightarrow\infty} f(x_i)$ 
and we now assume towards a contradiction 
that this value is strictly below $\lim_{i\rightarrow \infty}f(y_i)$. 
Since $B$ is closed, there is some $j\in\omega$ such that $B_j$ is located strictly between all $f(x_i)$ and $f(y_i)$; we fix the least such 
and let further $i\in\omega$ be least with $k_i\geq j$. Then the image $B_{k_i}$ of $A_{n_i}$ has to be chosen as $B_j$, contradicting the choice of $B_j$ above the limit.

It remains to show that $f^\star$ is a reduction of $A$ to $B$ that is increasing on $A$. 
Since this statement holds for $f$ by the construction and $B$ is closed, it also holds for $f^\star$ as required. 
\end{proof}


We now fix a subset $A$ of the unit interval containing $0$ and $1$ that is the union of a sequence $\vec{A}=\langle A_i\mid i\in\omega\rangle$ of disjoint sets, each of whom is homeomorphic to the set that is considered in Lemma \ref{lem:red_closed}, with the following properties for all $i,j\in\omega$ with $i\leq j$. 


\begin{itemize} 
\item[$\sqbullet$]
Any open interval disjoint from $A_i$ with an end point in $A_i$ contains some set of the form $A_k$ and some interval that is disjoint from $A$ (\emph{approachability})
\item[$\sqbullet$]
The convex closure of $A_j$ 
is either disjoint from or contained in that of $A_i$ 
(\emph{no overlaps}) 
\end{itemize} 
It can be shown by a back-and-forth argument that any two sets with these properties are homeomorphic. 


In the following result, all sets and functions live in the unit interval and in particular, we consider property (I) for subsets of the unit interval. 

\begin{Lem} \label{prop:red_fsigma} 
$A$ is reducible to any $F_\sigma$ subset $B$ of the unit interval that satisfies (I) and contains $0$ and $1$ by a function that fixes both $0$ and $1$. 
\end{Lem}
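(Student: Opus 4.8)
The plan is to construct, as in the proofs of Theorem \ref{lem:red_Q_I} and Lemma \ref{lem:red_closed}, an increasing sequence $\vec{f}=\langle f_n\mid n\in\omega\rangle$ of partial functions on the unit interval whose union $f$ admits a unique continuous extension $f^\star$ that reduces $A$ to $B$ and satisfies $f^\star(0)=0$ and $f^\star(1)=1$.

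First I would fix the relevant decompositions. Since $B$ is $F_\sigma$ and satisfies $(\text{I}_0)$, Theorem \ref{prop:dec_fsigma_I} yields a partition of $B$ into pairwise disjoint compact sets; subdividing, I take a list $\vec{B}=\langle B_n\mid n\in\omega\rangle$ of pairwise disjoint nonempty compact subsets of $B$ that is dense in $B$ and includes all maximal subintervals of $B$. Dually, let $\vec{D}=\langle D_n\rangle$ enumerate the maximal open subintervals of the complement of $B$; by $(\text{I}_1)$ their endpoints lie in $B$, and every point of $B$ is a bilateral limit of $B$. On the source side I keep the given decomposition $\vec{A}=\langle A_i\rangle$ and let $\vec{C}=\langle C_i\rangle$ enumerate the maximal open subintervals of the complement of $A$.

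The construction then proceeds by stages, following the three step types of the proof of Lemma \ref{lem:red_closed}. I begin by mapping the components of $0$ and $1$ in $A$ onto the components of $0$ and $1$ in $B$, via Lemma \ref{lem:red_closed} after an affine identification of the relevant $A_i$ with the fixed nice set (or directly at the boundary), arranged so that $f(0)=0$ and $f(1)=1$. At a successor stage I extend $f_n$ over finitely many maximal open intervals $I=(x,y)$ of the complement of its current domain, together with the corresponding image intervals $(f_n(x),f_n(y))$: when an image interval meets $B$, I select the least unused set $A_i\subseteq I$ and the least unused piece $B_j$ inside that image interval and map $A_i$ onto $B_j$ by a reduction that is increasing and matches endpoints, invoking Lemma \ref{lem:red_closed}; when an image interval is disjoint from $B$, I treat it as in Step \ref{closed step 1}; and complement intervals $C_i$ are sent into gaps $D_j$ of $B$. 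As in the proof of Theorem \ref{lem:red_Q_I}, whenever a gap is opened I choose target intervals of length at most $\frac1n$ with nested closures that avoid the first $n$ members of a fixed enumeration of a countable dense set, so as to control limits. Here the ``no overlaps'' property makes the convex closures of the $A_i$ into a tree under inclusion, which keeps the finitely many intervals at each stage well defined, while ``approachability'' ensures that every gap contains further sets $A_k$ and further complement intervals to continue the recursion.

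The verification splits as usual. To prove that $f$ has a unique continuous extension $f^\star$, I would introduce \emph{fresh} and \emph{nested} sequences at a point exactly as in the proof of Theorem \ref{lem:red_Q_I}: a nested sequence $A(\vec{n})$ at $x$ forces the images into a nested sequence of intervals of vanishing length and so pins down the limit, whereas in the absence of a nested sequence I extract a fresh sequence and combine the fact that $f$ is increasing on $A$ with freshness to rule out, by a least-index argument, that the one-sided image limits of two sequences with a common limit could disagree. Granting this, $f^\star$ reduces $A$ to $B$: points of $A$ map into $B$ and complement intervals into gaps by construction, while any $x\notin A$ carries either a nested or a fresh sequence whose image limit is forced into a gap $D_j$ and hence outside $B$; and $f^\star(0)=0$, $f^\star(1)=1$ are inherited from the first stage. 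I expect the main obstacle to be exactly this continuity-and-preimage step — showing both that every boundary point of $A$ receives a single well-defined value and that points outside $A$ are forced outside $B$ — because it must accommodate simultaneously the tree of nested convex closures arising on the source side from ``no overlaps'' and the merely $F_\sigma$, non-closed target $B$; the length-control and point-avoidance conditions built into the construction are precisely what make the argument close.
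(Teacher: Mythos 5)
Your overall architecture — stage-wise extension, fresh and nested sequences, least-index arguments — is the same as the paper's, but two of your structural choices break down for a general $F_\sigma$ target and constitute genuine gaps. First, you take $\vec{D}$ to be the maximal open subintervals of the complement of $B$ and propose to send the complementary pieces $C_i$ of $A$ "into gaps $D_j$ of $B$". For $B=\QQ\cap[0,1]$ (which is $F_\sigma$, satisfies (I) and contains $0$ and $1$) the complement of $B$ has empty interior, so your list $\vec{D}$ is empty and the recursion has nowhere to put the infinitely many intervals $C_i$ that approachability of $A$ forces to exist. The targets must be allowed to degenerate: the paper takes $\vec{D}$ to be a family of closed connected subsets (singletons included) of the complement of $B$, dense in that complement and with a singleton interposed between any two members, which is possible precisely because Lemma \ref{non delta02} guarantees uncountably many connected components of the complement in every relevant interval. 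Second, your length-control clause has the nested closures "avoid the first $n$ members of a fixed enumeration of a countable dense set". That is the right condition when the target is $\QQ$ (Theorem \ref{lem:red_Q_I}), but here $B$ may be uncountable, and avoiding countably many points does not force the limit of a nested sequence of image intervals out of $B$. You must instead make the interval $K_{\epsilon_n}$ chosen at stage $n$ disjoint from the closed pieces $B_j$ for all $j<n$ of the decomposition from Theorem \ref{prop:dec_fsigma_I}; only then does the nested-sequence case of the final verification ($x\notin A$ implies $f^\star(x)\notin B$) close.

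A related, smaller point: your verification leans on "$f$ is increasing on $A$", which holds in Lemma \ref{lem:red_closed} but fails here. In the case $f_n(x)=f_n(y)$ the one-sided target interval forces the image to go up into $K_{\epsilon_n}$ and come back, so the paper splits $A_i$ into two convex pieces mapped in opposite monotone directions and replaces global monotonicity by a local notion (\emph{increasing} or \emph{decreasing at} $x$), proving the existence of one-sided limits $x_{\mathrm{low}}$, $x_{\mathrm{high}}$ via coherence rather than via monotonicity on $A$. Your least-index arguments survive this change, but as written they do not apply verbatim.
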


\begin{proof} 
We can assume that the diameters of the sets in $\vec{A}$ converge to $0$ by splitting these sets into finitely many pieces and moreover that 
$0$ and $1$ are contained in $A_0$ and $A_1$, respectively. 
We further pick a partition $\vec{B}=\langle B_i\mid i\in\omega\rangle$ of $B$ into disjoint closed sets by Lemma~\ref{prop:dec_fsigma_I}. 
We can assume that $\vec{B}$ has no overlaps by splitting up the sets in $\vec{B}$ and that $0$ and $1$ are contained in $B_0$ and $B_1$, respectively. 
Let further $\vec{C}=\langle C_i\mid i\in\omega\rangle$ list all maximal closed intervals that are disjoint from $A$. 
We can assume that the convex closure of $A_j$ is disjoint from $C_i$ 
for all $i\leq j$ by splitting the sets in $\vec{A}$ into finitely many pieces. 
Finally, we fix a sequence  $\vec{D}=\langle D_n\mid n\in\omega\rangle$ of pairwise disjoint closed connected subsets of $B$ that includes all maximal subintervals and is chosen such that $\bigcup \vec{D}$ dense in the complement of $B$ and there is a singleton $D_k$ between any two distinct sets $D_i$ and $D_j$, the latter being possible since $B$ has uncountably many connected components in any open interval that does not contain it by Lemma \ref{non delta02}. 


We will construct increasing sequences $\vec{f}=\langle f_n\mid n\in\omega\rangle$ of functions, $\vec{\Gamma}=\langle\Gamma_n\mid n\in\omega\rangle$ of finite sets and $\vec{\epsilon}=\langle \epsilon_n\mid n\in\omega\rangle$ of rationals 
with the following properties for all $m\leq n$. 
\begin{itemize} 
\item[$\sqbullet$]
$\Gamma_m$ consists of sets in $\vec{A}$ and $\vec{C}$ 
\item[$\sqbullet$]
Each function $f_m$ is a finite union of monotone functions on all sets in $\Gamma_m$ where 
\begin{itemize} 
\item[$\blacktriangleright$]
each set $A_i$ is mapped onto a component of a set $B_j$ 
\item[$\blacktriangleright$]
each set $C_i$ is mapped onto a set $D_j$ 
\end{itemize} 
\item[$\sqbullet$]
Given any two successive sets in $\Gamma_m$ with disjoint images, 
$f_n$ maps the open interval between them strictly between their images (\emph{coherence})
\item[$\sqbullet$]
$\epsilon_{n+1}\leq \frac{\epsilon_n}{2}$ 
\end{itemize}


We begin the construction by 
defining $f_0$ on $A_0$ and $A_1$ by an application of Lemma \ref{lem:red_closed} to these sets with images $B_0$ and $B_1$ 
and further let $\Gamma_0=\{A_0,A_1\}$ and $k_0=1$. 
In the induction step $n=m+1$, we first let $f_n=f_m$, $\Gamma_n=\Gamma_m$, $k_n=k_m$ and then extend the former two and increase the latter finitely often.  
To this end, we will perform the following steps for all maximal open intervals $I=(x,y)$ that are strictly between two sets in $\Gamma_n$. 

\setcounter{Step}{0} 

\begin{Step} \label{minimal step 1} 
We pick the least $i\in\omega$ with $A_i$ contained in $I$. 
\begin{itemize} 
\item[$\sqbullet$]
If $f_n(x)\neq f_n(y)$, let $K=(f_n(x),f_n(y))^\star$ and pick the least $j\in\omega$ with $B_j\cap K\neq\emptyset$; 
map $A_i$ onto $B_j\cap K$ by an increasing function if $f_n(x)<f_n(y)$ and a decreasing function otherwise by applying Lemma \ref{lem:red_closed} 
\item[$\sqbullet$]
If $f_n(x)=f_n(y)$, 
let $K_\delta$ be intervals of  length $\delta$ such that $f_n(x)$ is an end point on a side from which it is approachable for any $\delta>0$; 
choose $\delta$ 
such that $K_\delta$ is 
disjoint from $B_j$ for all $j<n$ 
and make $\epsilon_n\leq \delta$ sufficiently small such that $K_{\epsilon_n}$ satisfies coherence; 
pick the least $j\in\omega$ with $B_j\cap K_{\epsilon_n}\neq\emptyset$; 
finally split $A_n$ into two homeomorphic relatively convex subsets and map them 
to $B_j\cap K_{\epsilon_n}$ by monotone 
functions by applying Lemma \ref{lem:red_closed}; 
if $K_{\epsilon_n}$ is above $f_n(x)$ then the function is increasing on the lower part and increasing on the upper part and the direction is reversed if $K_{\epsilon_n}$ is below $f_n(x)$ 
\end{itemize} 
\end{Step}

\begin{Step} \label{ minimal step 2} 
We pick the least $i\in\omega$ with $C_i$ contained in $I$. 
\begin{itemize} 
\item[$\sqbullet$]
If $f_n(x)\neq f_n(y)$, let $K=(f_n(x),f_n(y))^\star$ and pick the least $j\in\omega$ with $D_j\cap K\neq\emptyset$; 
map $C_i$ onto $D_j\cap K$ by an increasing function if $f_n(x)<f_n(y)$ and a decreasing function otherwise by applying Lemma \ref{lem:red_closed} 
\item[$\sqbullet$]
If $f_n(x)=f_n(y)$, 
let $K_\delta$ be intervals of length $\delta$ such that $f_n(x)$ is an end point on a side from which it is approachable for any $\delta>0$; 
choose $\delta$ 
such that $K_\delta$ 
is disjoint from $B_j$ for all $j<n$ 
and make $\epsilon_n\leq \delta$ sufficiently small such that $K_{\epsilon_n}$ satisfies coherence; 
pick the least $j\in\omega$ with $B_j\cap K_{\epsilon_n}\neq\emptyset$; 
finally we map $C_i$ to $D_j$ 
\end{itemize} 
\end{Step}

We first apply Step 1 to all intervals and after updating $f_n$, $\Gamma_n$ and $\epsilon_n$, we apply Step 2 to all new intervals. 
The second part of each step works since the assumptions on $f_n$ imply that $\partial A$ is mapped to $\partial B$ and therefore $f_n(x)$ is approachable with respect to $B$. 
We finally define $f$ as the union of $\vec{f}$.

We now show that this function has a unique continuous extension to the unit interval with the required properties. To this end, we 
write 
$\vec{n}=\langle n_i\mid i\in\omega\rangle$ for strictly increasing sequences and $X(\vec{n})$ for the sequence $\langle X_{n_i}\mid i\in\omega\rangle$, 
where $X$ denotes either $A$, $B$, $C$ or $D$. 
To define the following notation, we write $X$ for $A$ or $C$ and $Y$ for sets of the form $A_i$ or $C_i$. 
\begin{itemize} 
\item[$\sqbullet$]
A set $Y$ is \emph{fresh at $x$} if no real number strictly between $Y$ and $x$ is in the domain at the stage where $f{\upharpoonright}Y$ is defined 
\item[$\sqbullet$]
A sequence $X(\vec{n})$ is \emph{fresh at $x$} if is strictly monotone, converges to $x$ and consists only of fresh sets at $x$ that are listed in the same order as they appear in the construction 
\item[$\sqbullet$]
A sequence $X(\vec{n})$ is \emph{nested at $x$} if $X_{n_{i+1}}\subseteq \conv(X_{n_i})$ and $x\in\conv(X_{n_i})$ for all $i\in\omega$ 
\end{itemize} 
Moreover, we will use the following statements which follow immediately from the construction. 
\begin{itemize} 
\item[$\sqbullet$]
Assuming that there is a nested sequence at $x$, the images of all sequences in $\dom(f)$ that converge to $x$ converge to the same limit 
\item[$\sqbullet$]
Assuming that there is no nested sequence at $x$ 
\begin{itemize} 
\item[$\blacktriangleright$]
if $x\in\dom(f)$ is approachable on one side with respect to $A$, then there are fresh sequences $A(\vec{m})$ and $C(\vec{n})$ at $x$ on the same side 
\item[$\blacktriangleright$]
if $x\notin \dom(f)$ is approachable from both sides with respect to $A$, then there 
are fresh sequences $A(\vec{m})$ and $C(\vec{n})$ at different sides of $x$ 
\end{itemize} 
\end{itemize}

In the next statement $X$, $Y$ and $Z$ stand for sets of the form $A_i$ or $B_i$. 
For any $x\notin \dom(f)$ such that there is no nested sequence at $x$, we fix fresh sets $X$ below $x$ and $Y$ above $x$ such that there is no set $Z$ between them with $x\in\conv(Z)$. We say that $f$ is \emph{increasing at $x$} if the image of $X$ is strictly above the image of $Y$ and \emph{decreasing at $x$} if the order is reversed. 

For the next two claims, we assume that $x\notin \dom(f)$ and there is no nested sequence at $x$. 

\begin{Claim*} 
The limits of the images of all sequences that converge to $x$ from one side take fixed values $x_{\mathrm{low}}$ for sequences below and $x_{\mathrm{high}}$ for sequences above with the following properties. 
\end{Claim*} 
\begin{proof} 
We can assume that $f$ is increasing at $x$, since the proof of the other case is symmetric. 
For the first claim, it is sufficient to show that the image of any sequence $\vec{x}=\langle x_i\mid i\in\omega\rangle$ that converges to $x$ from below is Cauchy. 
To this end, we construct a fresh sequence that converges to $x$ from below as follows. 
Suppose that $X$ and $Y$ witness that $f$ is increasing at $x$. 
We now let $X_i$ denote sets of the form $A_j$ or $B_j$. 
We pick $X_0$ as the first set in the construction after $X$ and $Y$ that is strictly between them and fresh below $x$ with $x_0\leq y$ for some $y\in X_0$. 
Given $X_i$, we pick $X_{i+1}$ as the first set in the construction appearing after and above $X_i$ with $x_{i+1}\leq y$ for some $y\in X_{i+1}$. 
Then $\vec{X}=\langle X_i\mid i\in\omega\rangle$ is fresh. 
Moreover, it is immediate from the construction that the image of $\vec{X}$ is strictly increasing and converges to a real number $x_{\mathrm{low}}$. 
By coherence, $\lim_{i\rightarrow\infty} f(x_i)$ exists and is equal to $x_{\mathrm{low}}$. 
\end{proof} 

Given the previous claim, the following facts are immediate from the construction. 

\begin{itemize} 
\item[$\sqbullet$]
If $f$ is increasing at $x$, then images of fresh sequences that converge to $x$ from one side converge to their limit from the same side and $x_{\mathrm{low}}\leq x_{\mathrm{high}}$ 
\item[$\sqbullet$]
If $f$ is decreasing at $x$, then images of fresh sequences that converge to $x$ from one side converge to their limit from the other side and $x_{\mathrm{high}}\leq x_{\mathrm{low}}$ 
\end{itemize}

In the statement of the next claim, let $X$ stand for components of sets of the form $B_i$ or for sets of the form $D_i$. 
We further say that a set is located \emph{weakly between} two real numbers $x$ and $y$ if it is contained in the closed interval with these end points. 

\begin{Claim*} 
If each of the sequences $\vec{x}=\langle x_i\mid i\in\omega\rangle$ and $\vec{y}=\langle y_i\mid i\in\omega\rangle$ converges to $x$ from one side or is eventually constant with value $x$, 
then no set $X$ has elements that are located weakly between $\lim_{i\rightarrow\infty}f(x_i)$ and $\lim_{i\rightarrow \infty} f(y_i)$. 
\end{Claim*} 
\begin{proof} 
We first assume that $\vec{x}$ converges from below and $\vec{y}$ from above. 
We pick fresh sequences $A(\vec{m})$ below and $C(\vec{n})$ above $x$; 
the proof of the other case is similar. 
Now let $\vec{k}=\langle k_i\mid i\in\omega\rangle$ and $\vec{l}=\langle l_i\mid i\in\omega\rangle$ such that $A_{m_i}$ and $C_{n_i}$ are mapped to $B_{k_i}$ and $D_{l_i}$ for all $i\in\omega$. 
These sequences are both strictly increasing, 
since the sets $A_{m_i}$ and $C_{n_i}$ are fresh at $x$. 
Moreover, the sequences $B(\vec{k})$ and $D(\vec{l})$ 
converge to $\lim_{i\rightarrow\infty} f(x_i)$ and $\lim_{i\rightarrow\infty}f(y_i)$ by the previous claim. 

We now consider two cases for $X$. If $X$ is a component of $B_j$ and its index is least with the stated property, then 
the image $B_{k_i}$ of $A_{m_i}$ is chosen as a component of $B_j$ that contains $X$ for the least $i\in\omega$ with $k_i\geq j$. 
If on the other hand $X$ is equal to $D_j$ and its index is least with the stated property, then 
the image $D_{l_i}$ of $C_{n_i}$ is chosen as $D_j$ for the 
least $i\in\omega$ with $l_i\geq j$. 
However, both conclusions contradict the assumption on $X$. 

We now assume that $\vec{x}$ converges from below and $\vec{y}$ is eventually constant with value $x$. 
We pick fresh sequences $A(\vec{m})$ and $C(\vec{n})$ below $x$. 
Now let $\vec{k}=\langle k_i\mid i\in\omega\rangle$ and $\vec{l}=\langle l_i\mid i\in\omega\rangle$ such that $A_{m_i}$ and $C_{n_i}$ are mapped to $B_{k_i}$ and $D_{l_i}$ for all $i\in\omega$. 
These sequences are both strictly increasing, 
since the sets $A_{m_i}$ and $C_{n_i}$ are fresh at $x$. 
Moreover, the sequences $B(\vec{k})$ and $D(\vec{l})$ 
both converge to $\lim_{i\rightarrow\infty} f(x_i)$ from below by the previous claim. 

We now consider two cases for $X$. If $X$ is a component of $B_j$ and its index is least with the stated property, then 
the image $B_{k_i}$ of $A_{m_i}$ is chosen as a component of $B_j$ that contains $X$ for the least $i\in\omega$ with $k_i\geq j$. 
If on the other hand $X$ is equal to $D_j$ and its index is least with the stated property, then 
the image $D_{l_i}$ of $C_{n_i}$ is chosen as $D_j$ for the 
least $i\in\omega$ with $l_i\geq j$. 
However, both conclusions contradict the assumption on $X$. 

Finally, the remaining cases are symmetric to the previous ones. 
\end{proof}

Since the domain of $f$ is dense in the unit interval, it follows from the previous claim that 
there is a unique continuous extension $f^\star$ to the unit interval. 
It remains to show that $f^\star$ reduces $A$ to $B$. Since $f$ is a partial reduction of $A$ to $B$ whose  domain contains $A$, it remains to show that no $x\notin \dom(f)$ is mapped to $B$. 
If there is a nested sequence at $x$, then this follows from the choice of intervals $K_{\epsilon_n}$ disjoint from $B_i$ for all $i<n$ in the construction. 
If on the other hand there is no nested  sequence at $x$, then the claim follows immediately from the previous claim for components of sets of the form $B_j$. 
\end{proof}

\begin{Thm} \label{minimal set} 
$A$ is reducible to any non-trivial $F_\sigma$ subset of $\RR$ that satisfies (I). 
\end{Thm}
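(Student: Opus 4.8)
The plan is to run the construction of Lemma~\ref{prop:red_fsigma} directly on the whole line instead of on the unit interval, the only genuinely new ingredient being the treatment of the part of the complement of $A$ that lies outside a bounded region. I would first read off from (I) the structural facts about a non-trivial $F_\sigma$ target $B$ satisfying (I). As already exploited in Lemma~\ref{non delta02}, such a $B$ is neither open nor closed: were $B$ closed, its open complement would contain a maximal subinterval with a finite end point lying in $B$, contradicting $(\text{I}_1)$, and the open case is symmetric. Consequently $B$ has neither a least nor a greatest element, and since $B^c$ fails to be open the set $\partial B\cap B^c$ is nonempty. I would fix once and for all a point $p\in\partial B\cap B^c$; it is simultaneously a limit of $B$ and a two-sided limit of $B^c$, and it will serve as the common image of the unbounded complementary tails.

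Next I would set up the same data as in the proof of Lemma~\ref{prop:red_fsigma}: a partition of $B$ into pairwise disjoint closed sets furnished by Theorem~\ref{prop:dec_fsigma_I}, an enumeration of the connected components of $B^c$ (closed by $(\text{I}_1)$), and the fixed decomposition $A=\bigcup_i A_i$ together with its approachability and no-overlap properties. I would then build an increasing sequence of partial functions by the identical two-step scheme — first assigning values to the pieces of $A$ and then filling the complementary gaps — with the same bookkeeping of shrinking intervals $K_{\epsilon_n}$ that keeps images away from the finitely many already-used points of $B$, and invoking Lemma~\ref{lem:red_closed} whenever a compact piece of $A$ must be mapped into a component of $B$. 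The single modification is that the unbounded complementary components of $A$ are sent constantly to $p$, so that all points outside the bounded region carrying $A$ land in $B^c$, as required for a reduction.

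The verification that the union $f$ of these partial functions has a unique continuous extension $f^\star$ and that $f^\star$ reduces $A$ to $B$ would follow the fresh-sequence and nested-sequence analysis of Lemma~\ref{prop:red_fsigma} almost verbatim, since every point of $A$ is a two-sided limit of $A$ by $(\text{I}_1)$ and every point of $B$ is approachable as well. I expect the continuity of $f^\star$ at the junctions between a tail and the bounded part to be the main obstacle: there one must check that the one-sided limits of the images of the pieces of $A$ and of the pieces of its complement both equal $p$, which is exactly where it is used that $p$ is a limit of $B$ (so the approaching pieces of $A$ can be driven to $p$ through $B$) while being a two-sided limit of $B^c$ (so the complementary pieces, and the tail itself, remain in $B^c$). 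This is also the point at which the fact that a member of $B$ can never be the end point of a complementary interval forces the ends to be routed through $B^c$ rather than through $B$. Once continuity is established, the reduction property is inherited from the partial reduction $f$ exactly as in the closing paragraph of the proof of Lemma~\ref{prop:red_fsigma}.
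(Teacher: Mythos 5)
Your proposal founders on the treatment of the tails, and the failure is not a repairable technicality. You keep $A$ as a bounded set and send the two unbounded components of its complement constantly to a point $p\in\partial B\cap B^{c}$. But the set $A$ fixed before Lemma \ref{prop:red_fsigma} contains the endpoints $0$ and $1$ of the ``bounded region carrying $A$'', so continuity forces $f(0)=\lim_{x\to 0^{-}}f(x)=p\notin B$ while $0\in A$ demands $f(0)\in B$; the map you build is not a reduction. No choice of $p$ and no rerouting at the junction can fix this: if $f$ were any continuous reduction of a set with minimum $0\in A$ to $B$, then $f((-\infty,0))$ is a nonempty connected subset of $\RR\setminus B$ whose closure contains $f(0)\in B$, so it is either a singleton (giving $f(0)\notin B$ directly) or a non-degenerate interval whose interior lies in $\RR\setminus B$ and has $f(0)$ as an end point, contradicting $(\text{I}_1)$ for $B$. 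The same computation shows that the bounded $A$ fails $(\text{I}_1)$ itself and is therefore not even reducible to $\QQ$; the theorem only makes sense, and is only true, for an unbounded copy of $A$ whose interior is cofinal and coinitial in $\RR$. Your own remark that ``the ends must be routed through $B^{c}$'' is precisely the observation that kills the bounded version, but you do not draw that conclusion.

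The paper's argument avoids all of this and is much shorter than re-running the construction of Lemma \ref{prop:red_fsigma}: it chooses a strictly increasing sequence $\langle y_n\mid n\in\ZZ\rangle$ in $B$ with no interval $(y_n,y_{n+1})$ contained in $B$, and a strictly increasing sequence $\langle x_n\mid n\in\ZZ\rangle$ in the interior of (the unbounded) $A$, unbounded in both directions, with no interval $(x_n,x_{n+1})$ contained in $A$. Each block $[x_n,x_{n+1}]$ and $[y_n,y_{n+1}]$, rescaled to the unit interval, satisfies the hypotheses of Lemma \ref{prop:red_fsigma} (the cut points lie in the respective sets, and neither restriction is trivial), so the lemma supplies reductions $f_n\colon[x_n,x_{n+1}]\to[y_n,y_{n+1}]$ fixing the end points, and these glue to a continuous reduction on all of $\RR$. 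No distinguished boundary point $p$ and no constant tails are needed. If you want to salvage your write-up, replace the bounded $A$ by its unbounded copy at the outset and then either adopt this blockwise gluing or note that with no unbounded complementary components your ``only genuinely new ingredient'' disappears entirely.
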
 
\begin{proof} 
Assuming that $B$ is such a set, we choose a strictly increasing sequence $\vec{y}=\langle y_n\mid n\in \ZZ\rangle$ in $B$ with no interval $(y_n,y_{n+1})$ contained in $B$. 
We further choose a strictly increasing sequence $\vec{x}=\langle x_n\mid n\in \ZZ\rangle$ in the interior of $A$ with no interval $(x_n,x_{n+1})$ contained in $A$ and no lower or upper bound in $\RR$. 
Then the union of the functions $f_n\colon [x_n,x_{n+1}]\rightarrow [y_n,y_{n+1}]$ given by Lemma \ref{prop:red_fsigma} reduces $A$ to $B$. 
\end{proof}

\section{Gaps and cardinal characteristics} 


The structure of the poset $\mathcal{P}(\omega)/\mathsf{fin}$ has been intensively studied, for instance pertaining 
the existence of gaps and consistent values of cardinal characteristics; for a detailed overview over the theory of gaps in $\mathcal{P}(\omega)/\mathsf{fin}$ and related quasi-orders, we refer the reader to \cite{Scheepers_Gaps} and for cardinals characteristics to \cite{MR2768685}.
Inspired by results in this field, J\"org Brendle asked whether there are gaps in the Wadge quasi-order for the real line. 
This question is answered positively in the next theorem and thereby it is shown that 
the Wadge quasi-order is incomplete. 


\begin{Thm} \label{gaps} 
For all cardinals $\kappa, \lambda\geq 1$ such that there is a $(\kappa,\lambda)$-gap in $\mathcal{P}(\omega)/\mathsf{fin}$, there is a $(\kappa,\lambda)$-gap in the Wadge quasi-order on the Borel subsets of the real line. 
\end{Thm}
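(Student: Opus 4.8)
The plan is to transfer the gap along the embedding $\Phi\colon b\mapsto A_{b^\star}$ of $\mathcal{P}(\omega)/\mathsf{fin}$ into the Wadge quasi-order that was constructed in the proof of Theorem \ref{embed Pomega mod fin} (for $L=\ZZ^{(\omega)}$), and to exploit the rigidity of $\ZZ^{(\omega)}$ to prevent any Borel set from filling the image of the gap. First I fix a $(\kappa,\lambda)$-gap $(\langle a_\alpha\mid\alpha<\kappa\rangle,\langle b_\beta\mid\beta<\lambda\rangle)$ in $\mathcal{P}(\omega)/\mathsf{fin}$ and pass to the families $\langle A_{a_\alpha^\star}\rangle$ and $\langle A_{b_\beta^\star}\rangle$. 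Since $\Phi$ is an embedding of quasi-orders, $A_{a_\alpha^\star}<A_{b_\beta^\star}$ for all $\alpha$ and $\beta$, so these two families already form a pre-gap in the Wadge quasi-order on the Borel sets; the whole content of the theorem is to rule out an interpolant.

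So suppose towards a contradiction that $C$ is a Borel set with $A_{a_\alpha^\star}<C<A_{b_\beta^\star}$ for all $\alpha$ and $\beta$. As $C$ is reducible to the $D_2({\bf \Sigma}^0_1)$-set $A_{b_0^\star}$, it is itself $D_2({\bf \Sigma}^0_1)$. I fix a reduction $g$ of $A_{a_0^\star}$ to $C$ and, for each $\beta$, a reduction $h_\beta$ of $C$ to $A_{b_\beta^\star}$. The composite $h_\beta\circ g$ reduces $A_{a_0^\star}$ to $A_{b_\beta^\star}$, so by Lemma \ref{lem:discrete_emb} together with the fact noted before Theorem \ref{embed Pomega mod fin} that every isomorphism of $\ZZ^{(\omega)}$ onto a convex subset is surjective, the induced map $(h_\beta\circ g)^\mapone$ is an \emph{automorphism} of $\ZZ^{(\omega)}$. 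In particular $h_\beta\circ g$ carries the skeleton $\mapone(L)$ onto all of $\mapone(L)$, which was chosen cofinal and coinitial in $\RR$; since the range of $h_\beta$ is an interval containing this skeleton, $h_\beta$ is onto $\RR$, whence $h_\beta(C)=h_\beta(\RR)\cap A_{b_\beta^\star}=A_{b_\beta^\star}$.

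The crucial step is then to promote this structural surjectivity to a reverse reduction, that is, to show $A_{b_\beta^\star}\leq C$ and hence that $C$ and $A_{b_\beta^\star}$ are Wadge equivalent. Because $h_\beta$ maps $C$ onto the whole of $A_{b_\beta^\star}$, and $A_{b_\beta^\star}$ is assembled from half-open intervals with approachable end points and from compact pieces, I would select preimages of these pieces inside $C$ and glue them into a continuous map, exactly along the interval-by-interval and back-and-forth constructions used in the proofs of Lemmas \ref{lem:red_closed} and \ref{prop:red_fsigma}; this yields a continuous $k$ with $k^{-1}(C)=A_{b_\beta^\star}$. But then $C$ is Wadge equivalent to $A_{b_\beta^\star}$, contradicting the assumption $C<A_{b_\beta^\star}$ that is part of $C$ being an interpolant. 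Hence no interpolant exists and $(\langle A_{a_\alpha^\star}\rangle,\langle A_{b_\beta^\star}\rangle)$ is a $(\kappa,\lambda)$-gap, as required.

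I expect the reverse reduction of the previous paragraph to be the main obstacle. The surjectivity $h_\beta(C)=A_{b_\beta^\star}$ is cheap, being a formal consequence of the rigidity of $\ZZ^{(\omega)}$ and Lemma \ref{images of L-structured sets}; but turning it into an actual reduction $A_{b_\beta^\star}\leq C$ requires controlling the fibres of $h_\beta$ over each structural piece and assembling a continuous selection, and this is the one place where the detailed geometry of the sets $A_{b^\star}$, rather than their mere $L$-structure, is used. By contrast the bookkeeping of strictness is routine, since it follows directly from the definition of a gap that an interpolant lies strictly below every $A_{b_\beta^\star}$, which is precisely what the Wadge equivalence $C\equiv A_{b_\beta^\star}$ contradicts; note also that only a single $a_0$ and a single $b_\beta$ are needed, so the argument covers all $\kappa,\lambda\geq1$ uniformly.
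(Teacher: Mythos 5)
Your setup and the surjectivity step match the paper: reductions out of $A_{a_0^\star}$ compose with reductions into $A_{b_\beta^\star}$ to give automorphisms of $\ZZ^{(\omega)}$, so the interpolant $C$ is itself $L$-structured. But your ``crucial step'' --- promoting the surjectivity $h_\beta(C)=A_{b_\beta^\star}$ to a reverse reduction $A_{b_\beta^\star}\leq C$ --- is not just hard, it is false, and your own closing remark exposes the problem. You claim only a single $a_0$ and a single $b_\beta$ are needed; but for any $a_0\subsetneq_{\mathsf{fin}} c\subsetneq_{\mathsf{fin}} b_0$ (such $c$ always exists since $b_0\setminus a_0$ is infinite mod finite) the set $C=A_{c^\star}$ satisfies $A_{a_0^\star}<C<A_{b_0^\star}$ and admits a \emph{surjective} reduction to $A_{b_0^\star}$ by Lemma \ref{lem:discrete_emb}\ref{lem:discrete_emb 2a}, yet is not Wadge equivalent to $A_{b_0^\star}$ because the map $b\mapsto A_{b^\star}$ is an embedding. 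So surjectivity of a reduction onto $A_{b_\beta^\star}$ cannot yield $A_{b_\beta^\star}\leq C$, and the fibre-selection/gluing you sketch would have to fail: over the singleton compact pieces of $A_{b_\beta^\star}$ indexed by $b_\beta\setminus c$ the fibres in $C$ are nondegenerate intervals, and no continuous selection can recover the singleton structure.

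The paper's proof necessarily uses the whole gap, not a single pair. After establishing that $C$ is $L$-structured by $f_0\circ\mapone$ with witnessing compacta $\langle C_i\mid i\in L\rangle$, it \emph{reads off} a subset $c$ of $\omega$ from $C$ by letting $n\in c$ iff $C_{s{}^\smallfrown 1{}^\smallfrown 0^\omega}$ is a singleton for some $s$ of length $n$, and then shows, using that the relevant induced automorphisms of $\ZZ^{(\omega)}$ fix all coordinates above some $n_0$, that $a\subseteq_{\mathsf{fin}} c$ for every $a\in A$ and $c\subseteq_{\mathsf{fin}} b$ for every $b\in B$. The contradiction is with $(A,B)$ being a gap in $\mathcal{P}(\omega)/\mathsf{fin}$, not with the strictness $C<A_{b_\beta^\star}$. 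To repair your argument you would need to replace the attempted reverse reduction by this decoding step.
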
 
\begin{proof} 
Consider the linear order $L=\ZZ^{(\omega)}$ that was introduced after Lemma \ref{lem:discrete_emb} 
and let $\mapone\colon L^{\otimes 4}\rightarrow\RR$ be a discrete embedding with $\ran(\mapone)$ cofinal and coinitial in $\RR$. 
Moreover, consider the subsets $b^\star$ of $L$ defined after 
and the sets $A_{b^\star}$ defined before Lemma \ref{lem:discrete_emb} 
and further define a function $F$ by letting $F(b)=A_{b^\star}$ for any subset $b$ of $\omega$. 
We further say that an automorphism $\sigma$ of $\ZZ^{(\omega)}$ \emph{fixes $n$} if $\sigma(0^\omega)=\vec{x}=\langle x_i\mid i\in\omega\rangle$ with $x_i=0$ for all $i\geq n$. 
It is easy to see that for any such automorphism $\sigma$ and for any $\vec{y}=\langle y_i\mid i\in\omega\rangle$ and $i\geq n$, we have $\sigma(\vec{y})=\vec{z}=\langle z_i\mid i\in\omega\rangle$ with $y_i=z_i$ for all $i\geq n$. 

We now show that for any gap $(A,B)$ in $\mathcal{P}(\omega)/\mathsf{fin}$ with $A$ and $B$ nonempty, the pair $(F(A),F(B))$ is a gap in the Wadge quasi-order. 
To this end, we first claim that any reduction $f$ of $A_{a^\star}$ to $C$ for any $a\in A$ is surjective. 
To see this, we choose an arbitrary $b\in B$ and a reduction $g$ of $C$ to $A_{b^\star}$. 
By Lemma \ref{images of L-structured sets} the images $f(A_{a^\star})$ and $(g\circ f)(A_{a^\star})$ are $L$-structured by $f\circ \mapone$ and $g\circ f\circ \mapone$, respectively. 
Therefore $\mapone^{-1} \circ(g\circ f\circ\mapone)$ is an automorphism of $L$ and hence $g{\upharpoonright}(f\circ \mapone)(L)\colon (f\circ \mapone)(L)\rightarrow \mapone(L)$ is strictly monotone and bijective. Since $g$ is continuous and $\mapone(L)$ is both cofinal and coinitial in $\RR$, the same holds for $(f\circ \mapone)(L)$ 
and therefore $f$ is surjective. 
To prove the theorem, assume towards a contradiction that some subset $C$ of $\RR$ lies between $F(A)$ and $F(B)$. 
We choose an arbitrary $a_0\in A$ and a reduction $f_0$ of $A_{a_0^\star}$ to $C$. Since $f_0$ is surjective, $C$ is $L$-structured by $f_0 \circ \mapone$ by Lemma \ref{images of L-structured sets} as witnessed by some sequence $\vec{C}=\langle C_i\mid i\in L\rangle$ of compact sets. 
We then define $n\in c$ to hold if $C_{s{}^\smallfrown 1{}^\smallfrown 0^\omega}$ is a singleton for some $s$ of length $n$. 

We now show that $c$ lies between $A$ and $B$ and 
to this end first show that $a\subseteq_{\mathsf{fin}} c$ for all $a\in A$. 
We thus assume that $f$ reduces $A_{a^\star}$ to $C$ 
and that $A_{a_0^\star}$ and  $A_{a^\star}$ are $L$-structured by $\sigma=f_0\circ\mapone$ and $\tau=f\circ \mapone$ as witnessed by $\vec{A}=\langle A_i\mid i\in L\rangle$ and $\vec{B}=\langle B_i\mid i\in L\rangle$. 
Then $\sigma\circ\tau^{-1}$ is an automorphism of $L$ and hence it fixes some $n_0\in\omega$. 
We further have that the set $B_{0^n{}^\smallfrown 1{}^\smallfrown 0^\omega}$ is a singleton for any $n\in a$ by the definition of $A_{a^\star}$. 
If moreover $n\geq n_0$, then $A_{s{}^\smallfrown 1{}^\smallfrown 0^\omega}$ is a singleton for some $s$ of length $n$ and therefore $n\in c$. 
In order to finally show that $c\subseteq_{\mathsf{fin}} b$ for all $b \in B$, we assume that $f$ reduces $C$ to $A_{b^\star}$ and that 
$A_{b^\star}$ is $L$-structured by $\sigma=f\circ f_0\circ \mapone$ and $\mapone$ as witnessed by $\vec{A}=\langle A_i\mid i\in L\rangle$ and $\vec{B}=\langle B_i\mid i\in L\rangle$. 
Then $\sigma\circ\mapone^{-1}$ is an automorphism of $L$ and hence it fixes some $n_0\in\omega$. 
For any $n\in c$, there is some  $s$ of length $n$ such that $C_{s{}^\smallfrown 1{}^\smallfrown 0^\omega}$ is a singleton by the definition of $c$. 
Since $g$ reduces $C$ to $A_{b^\star}$, the set $A_{s{}^\smallfrown 1{}^\smallfrown 0^\omega}$ is a singleton. 
If moreover $n\geq n_0$, then $B_{t{}^\smallfrown 1{}^\smallfrown 0^\omega}$ is a singleton for some $t$ of length $n$. 
Since $\vec{B}$ witnesses that $A_{b^\star}$ is $L$-structured, it follows that $t=0^n$ and $n\in b$. 
\end{proof}

In particular, Theorem \ref{gaps} implies that $(\omega_1,\omega_1)$-gaps, $(\omega,\mathfrak{b})$-gaps and $(\mathfrak{b},\omega)$-gaps exist in the Wadge quasi-order, since it is well-known that \emph{Hausdorff gaps} and \emph{Rothberger gaps} of these types exists in $\mathcal{P}(\omega)/\mathsf{fin}$ (see \cite[Theorem 10, Theorem 19]{Scheepers_Gaps} and  \cite[Theorem 21]{Scheepers_Gaps}, respectively); here $\mathfrak{b}$ denotes the bounding number of the poset $\mathcal{P}(\omega)/\mathsf{fin}$.

Moreover, the similarity between $\mathcal{P}(\omega)/\mathsf{fin}$ and the Wadge quasi-order 
suggests to ask about the values of some cardinal characteristics of the Wadge quasi-order. The next theorem refers to these characteristics and determines the values of the bounding and dominating numbers among others, thus answering questions of Stefan Geschke. 
In the statement of the following fact about definable quasi-orders which we need for its proof, $\mathbb{C}_{\kappa}$ denotes the finite support product of $\kappa$ many copies of Cohen forcing for a cardinal $\kappa$.

\begin{Fact}\label{boost continuum} 
Suppose that $\kappa$ is an infinite cardinal with $\alpha^{\omega}<\kappa$ for all $\alpha<\kappa$. 
Then in any $\mathbb{C}_{\kappa}$-generic extension $V[G]$, 
the length of any quasi-order 
with domain $\mathcal{P}(\omega)$ that is definable from a real and an ordinal is strictly less than $\kappa$. 
\end{Fact}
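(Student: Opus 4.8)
The plan is to argue by contradiction, exploiting the weak homogeneity and finite-support structure of $P=\mathbb{C}_\kappa$ together with an elementary submodel reflection whose feasibility is exactly what the hypothesis $\alpha^\omega<\kappa$ provides. Suppose that in $V[G]$ some quasi-order $\preceq$ on $\mathcal{P}(\omega)$, definable from a real $z$ and an ordinal $\gamma$, has length at least $\kappa$. Since the length is the supremum of the sizes of well-ordered subsets, I would first extract a strictly $\prec$-increasing sequence $\langle A_\xi\mid \xi<\kappa\rangle$, where $\prec$ is the strict part of $\preceq$. I would then record the standard features of $P$: it is ccc with finite supports and weakly homogeneous, permutations of $\kappa$ act by automorphisms, $P\cong\mathbb{C}_S\times\mathbb{C}_{\kappa\setminus S}$ for every $S\subseteq\kappa$, and by the countable chain condition every real of $V[G]$ (in particular each $A_\xi$ and the parameter $z$) has a nice name with countable support. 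Fix such names $\dot A_\xi$ with supports $s_\xi$ and a support $s_z$ for $z$; note also $2^\omega<\kappa$ in $V$ by taking $\alpha=2$.

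The core of the argument would be a reflection step. I would build $M\prec H_\theta$ of size $\lambda<\kappa$ closed under countable sequences, i.e. $M^\omega\subseteq M$ — this is precisely where $\lambda^\omega=\lambda<\kappa$ (hence $\alpha^\omega<\kappa$ for the relevant $\alpha$) is used — with $z,\gamma,P$ and the sequence of names in $M$ and with $\delta:=M\cap\kappa$ an ordinal of uncountable cofinality. Because $M$ is closed under $\omega$-sequences, for every $\xi<\delta$ one has $\dot A_\xi\in M$ and $s_\xi\subseteq\delta$, so the reflected reals $\langle A_\xi\mid\xi<\delta\rangle$ all lie in the intermediate model $W:=V[G\upharpoonright\delta]$ and still form a strictly $\prec$-increasing chain there. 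The decisive use of homogeneity is then that, writing $V[G]=W[G_{\mathrm{tail}}]$ for the generic $G_{\mathrm{tail}}$ of the weakly homogeneous tail $\mathbb{C}_{\kappa\setminus\delta}$ and observing that $z,\gamma$ and all the $A_\xi$ ($\xi<\delta$) are \emph{ground} parameters for this tail forcing, the truth of $A_\xi\prec A_\eta$ for $\xi,\eta<\delta$ is governed by the ground forcing relation $\mathbf{1}\Vdash_{\mathrm{tail}}\varphi(\check A_\xi,\check A_\eta,\check z,\check\gamma)$. Hence $\prec$ restricted to the reflected reals is absolute to $W$ and is in fact a relation definable in $W$; this is how one removes the obstruction that a relation merely ``definable from a real and an ordinal'' is not absolute.

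With the comparisons of the reflected chain now governed by a ground forcing relation, I would extract the contradiction from the local structure of Cohen forcing. Comparing $A_\delta$ (a Cohen real over $W$ added by the countable forcing $\mathbb{C}_{s_\delta\setminus\delta}$) with the reflected chain, weak homogeneity again turns each assertion $A_\xi\prec A_\delta$ into a statement forced by a finite condition of this countable Cohen forcing, so that the relation behaves like an \emph{open} relation with a code in $W$. On such an open relation the separability-type pigeonhole applies: for $\xi<\eta$ the witness is a pair of basic clopen sets $[s]\times[t]$ contained in the relation with $A_\xi\in[s]$, $A_\eta\in[t]$; using that there are only countably many basic witnesses while the chain has uncountable cofinality, one produces a ``crossing'' pair realizing both $A_\xi\prec A_\eta$ and $A_\eta\prec A_\xi$, contradicting irreflexivity and transitivity of the strict part. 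Since $\delta$ was chosen of uncountable cofinality below $\kappa$, this contradiction rules out a chain of length $\kappa$, so the length is strictly below $\kappa$.

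The step I expect to be the main obstacle is the passage in the second and third paragraphs: upgrading the pure ``open relation has short well-ordered subsets'' principle — which by itself would only bound chains below $\omega_1$ and applies only to absolute relations — to the non-absolute, ordinal-definable relation at hand. The whole point of reflecting to a model of size $<\kappa$ closed under $\omega$-sequences, and of decomposing $\mathbb{C}_\kappa$ as a product with a weakly homogeneous tail, is to make the reflected fragment of $\prec$ \emph{behave} like an open relation coded in the smaller model, and it is here that the cardinal arithmetic hypothesis $\alpha^\omega<\kappa$ is essential (both for the closure of $M$ and for the $\Delta$-system/name-counting that keeps the supports and witnesses aligned). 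Verifying carefully that the homogeneity reduction and the local Cohen pigeonhole interact correctly to yield the crossing pair, rather than merely an unbounded family of upper bounds, is the delicate heart of the proof; I would expect the write-up to spend most of its length making exactly this reduction precise.
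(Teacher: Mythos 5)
Your first two paragraphs are a legitimate, if roundabout, setup: reflecting the chain into $W=V[G{\upharpoonright}\delta]$ and using weak homogeneity of the tail $\mathbb{C}_{\kappa\setminus\delta}$ to see that, for $\xi,\eta<\delta$, the statement $A_\xi\prec A_\eta$ is equivalent to $\one\Vdash_{\mathrm{tail}}\varphi(\check A_\xi,\check A_\eta,\check z,\check\gamma)$ is correct. The gap is in the third paragraph, which is supposed to deliver the contradiction. Being decided by $\one$ of the tail makes the restricted relation \emph{definable in $W$}; it does not make it \emph{open}. The reals $A_\xi$ for $\xi<\delta$ are ground-model objects for the tail forcing, so there are no finite conditions -- hence no basic clopen boxes $[s]\times[t]$ -- witnessing comparisons \emph{among them}; the only coordinate in which finite-condition (``open'') approximations are available is the single generic element $A_\delta$, and, as you concede, that yields only an unbounded family below one element, not a crossing pair. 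If the pigeonhole you describe did apply, it would show that no quasi-order definable from a real has a well-ordered chain of uncountable cofinality, which is false: strict almost-inclusion $\subsetneq_{\mathsf{fin}}$ is parameter-free definable and has $\omega_1$-towers in $V$ that remain strictly increasing in $V[G]$. A crossing pair requires \emph{two} chain elements that are mutually generic with isomorphic names over a common model, and your reflection isolates only one.

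The paper's proof supplies exactly this missing ingredient, and without the elementary-submodel detour: take nice names $\dot{x}_\alpha$ ($\alpha<\kappa$) for the chain with countable supports $s_\alpha$, apply the $\Delta$-system lemma and absorb the root into the ground model (this, together with counting nice $\mathbb{C}_{\omega_1}$-names, is where the hypothesis $\alpha^\omega<\kappa$ is actually used -- not for closing a submodel under $\omega$-sequences), and use $2^\omega<\kappa$ to find $\alpha\neq\beta$ whose names become equal after collapsing their now pairwise disjoint supports. The automorphism of $\mathbb{C}_\kappa$ swapping $s_\alpha$ with $s_\beta$ sends $\dot{x}_\alpha$ to $\dot{x}_\beta$ and vice versa while fixing the parameters, so $\one$ forces both $\dot{x}_\alpha<_\varphi\dot{x}_\beta$ and $\dot{x}_\beta<_\varphi\dot{x}_\alpha$, contradicting asymmetry of the strict part. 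That automorphism swap is the ``crossing pair'' your sketch needs and does not produce.
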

\begin{proof} 
Suppose that $p\in{\mathbb{C}_{\kappa}}$ forces that $\varphi(x,y,\dot{x},\gamma)$ defines the strict part of a quasi-order on $\mathcal{P}(\omega)$, where $\dot{x}$ is a nice name for a subset of $\omega$. Since can assume that $\dot{x}$ is a name for an element of $V$ by passing to an intermediate extension, we will also write $x<_{\varphi}y$ for $\varphi(x,y,\dot{x},\gamma)$. 
Moreover, suppose that $(\dot{x}_{\alpha}\mid \alpha<\omega_1)$ is a sequence of nice $\mathbb{C}_{\kappa}$-names for subsets of $\omega$ with $p\Vdash_{\mathbb{C}_{\kappa}} \dot{x}_{\alpha}<_{\varphi} \dot{x}_{\beta}$ for all $\alpha<\beta<\omega_1$ -- we can assume that $p=\one_{\mathbb{C}_{\kappa}}$. 
Now let $s_{\alpha}=\text{supp}(\dot{x}_{\alpha}) = \bigcup_{(\check{n} , p ) \in  \dot{x}_{\alpha}} \text{supp} (p)\subseteq \kappa$ for each $\alpha < \kappa$, so each $s_{\alpha}$ is countable. We can assume that $\langle s_{\alpha}\mid \alpha<\omega_1\rangle$ forms a $\Delta$-system by the $\Delta$-system lemma and moreover, 
that the root is empty by passing to an intermediate extension. Thus we assume that $\langle s_{\alpha}\mid \alpha<\omega_1\rangle$ forms a disjoint family. 
Now let $\mathbb{C}_{\kappa}(s)=\{p\in\mathbb{C}_{\kappa}\mid \text{supp}(p)\subseteq s\}$ for $s\subseteq\kappa$. The function that collapses $s_{\alpha}$ to an ordinal induces an isomorphism between $\mathbb{C}_{\kappa}(s_{\alpha})$ and $\mathbb{C}_{\beta_{\alpha}}$ for some $\beta_{\alpha}<\omega_1$ and this function maps $\dot{x}_{\alpha}$ to a nice $\mathbb{C}_{\omega_1}$-name $\pi(\dot{x}_{\alpha})$ for a subset of $\omega$. 
Since there are only $2^{\omega}<\kappa$ many such names and $\kappa$ is regular, there is an unbounded set $I\subseteq\kappa$ with $\pi(\dot{x}_{\alpha})=\pi(\dot{x}_{\beta})$ for all $\alpha,\beta\in I$. 
Finally, assume that $\alpha,\beta\in I$ and $\alpha\neq\beta$. Since $s_{\alpha}$, $s_{\beta}$ are disjoint and $\pi(\dot{x}_{\alpha})=\pi(\dot{x}_{\beta})$, there is an automorphism $\sigma$ of $\mathbb{C}_{\kappa}$ with $\sigma(\dot{x}_{\alpha})=\dot{x}_{\beta}$ and $\sigma(\dot{x}_{\beta})=\dot{x}_{\alpha}$. 
Hence $\one_{\mathbb{C}_{\kappa}}\Vdash \dot{x}_{\alpha}<_{\varphi}\dot{x}_{\beta}$ if and only if $\one_{\mathbb{C}_{\kappa}}\Vdash \dot{x}_{\beta}<_{\varphi}\dot{x}_{\alpha}$, contradicting the assumption that $\varphi$ defines a quasi-order. 
\end{proof}

In the next theorem, the \emph{lengths} $\ell$ and $\ell^\star$ denote the suprema of the sizes of well-ordered and reverse well-ordered subsets, respectively.

\begin{Thm} \label{cardinal characteristics} 
The least size of an unbounded family is $\omega_1$, while the least size of a dominating family and the maximal size of a linearly ordered subset equal $2^\omega$. Moreover, $\ell=\ell^\star$ is consistent with each of the following statements: (a) $\ell=\omega_1$ (b) $\ell=2^\omega$ and (c) $\omega_1<\ell<2^\omega$ and $2^\omega$ is arbitrarily large. 
\end{Thm} 
\begin{proof} 
We fix a sequence $\vec{f}=\langle f_n\mid n\in\omega\rangle$ of homeomorphisms of $\RR$ onto open intervals that are pairwise disjoint. 
Then any sequence $\vec{A}=\langle A_n\mid n\in\omega\rangle$ of subsets of $\RR$ has the upper bound $\bigcup_{n\in\omega} f(A_n)$ in the Wadge quasi-order. 
On the other hand, any uncountable subset of the collection that is given in Theorem \ref{no complete sets} is unbounded 
and 
Theorem \ref{no complete sets} immediately implies that every dominating family has size $2^\omega$. 
To obtain a linearly ordered set of size $2^\omega$ in the Wadge quasi-order, observe that the function $F$ that is defined by $F(x)=\{s\in 2^{<\omega}\mid s{}^\smallfrown 0^\omega\leq_{\mathrm{lex}} x\}$  embeds $(2^\omega,\leq_{\mathrm{lex}})$ into $(\mathcal{P}(2^{<\omega}),\subseteq)$; 
it is further easy to see that the latter is embeddable into $\mathcal{P}(\omega)/\mathsf{fin}$ and this in turn 
into the Wadge quasi-order by Theorem \ref{embed Pomega mod fin}. 

It remains to prove the statement that refers $\ell$ and $\ell^\star$. We first remark that their values are always at least $\omega_1$, since both $\omega_1$ and $\omega_1^\star$ are embeddable into the Wadge quasi-order by Theorem \ref{embed Pomega mod fin}. 
To attain the value $\omega_1$ with large continuum, we force with $\mathbb{C}_\kappa$ for $\kappa=\omega_2$ over a model of the continuum hypothesis $\mathsf{CH}$ and thus obtain the desired values from 
Lemma \ref{boost continuum}. 
Moreover, the consistency of larger values follows from the fact that Martin's Axiom $\mathsf{MA}_{\omega_1}$ for almost disjoint forcing implies that there are well-ordered and reverse well-ordered subsets of $\mathcal{P}(\omega)/\mathsf{fin}$ of size $2^\omega$ (see \cite[Theorem 4.25]{MR1393943}). 
To finally reach values strictly between $\omega_1$ and the continuum, consider the standard model of $\mathsf{MA}_{\omega_1}$ that is obtained by forcing over a model of the generalized continuum hypothesis $\mathsf{GCH}$ and now force with $\mathbb{C}_\kappa$ for $\kappa=\omega_3$. In this extension, the values of both $\ell$ and $\ell^\star$ are equal to $\omega_2$ by Lemma \ref{boost continuum}. 
\end{proof}

\section{
One-sided continuous functions} \label{section one-sided reducibility}

We finally touch the topic of reducibilities with respect to other classes of functions. 
For instance, one obtains a reducibility with a much simpler structure 
by slightly expanding the class of continuous functions. 
Recall that a function $f$ on $\RR$ is called \emph{right-continuous} at $x$ if for every $\epsilon>0$, there is some $\delta>0$ such that for all $y$ with $x<y<x+\delta$, we have $|f(x)-f(y)|<\epsilon$. 
As reductions, we consider the class of finite compositions of right-continuous functions and let $\preceq$ and $\prec$ denote the corresponding notions of reducibility and strict reducibility. 
We will obtain the $\mathsf{SLO}$ principle for this reducibility by comparing it with the Wadge quasi-order for the Baire space via a function as in the following fact.

\begin{Fact} \label{continuous bijection} 
There is a continuous bijection $f\colon {}^{\omega}\omega\rightarrow \RR$ 
with a right-continuous inverse. 
\end{Fact}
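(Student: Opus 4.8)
The plan is to build $f$ directly from a Lusin--Cantor scheme of half-open intervals indexed by the tree ${}^{<\omega}\omega$, arranged so that closing each interval on the left and opening it on the right forces the inverse to be right-continuous.

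First I would assign to each node $s\in{}^{<\omega}\omega$ a half-open interval $I_s=[a_s,b_s)$ of $\RR$ with the following properties: $I_\emptyset=\RR$; for each $s$ the intervals $\langle I_{s{}^\frown k}\mid k\in\omega\rangle$ partition $I_s$; and $\mathrm{diam}(I_s)\to0$ along every branch. Concretely, at the root I fix a bijection $\omega\to\ZZ$ and let the children of $\emptyset$ enumerate the tiling $\{[n,n+1)\mid n\in\ZZ\}$, so that the unboundedness of $\RR$ is absorbed at the first level and every $I_s$ with $|s|\ge1$ has finite length. For a node $s$ with $I_s=[a_s,b_s)$ of finite length I set $c_k=a_s+(b_s-a_s)(1-2^{-k})$ and let $I_{s{}^\frown k}=[c_k,c_{k+1})$; thus the children accumulate only at the right endpoint $b_s$, each child has right endpoint $c_{k+1}<b_s$ strictly, and $\mathrm{diam}(I_{s{}^\frown k})\le\tfrac12\mathrm{diam}(I_s)$, giving $\mathrm{diam}(I_s)\le 2^{-|s|}$ for $|s|\ge1$.

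Next I would define $f(x)$ for $x\in{}^\omega\omega$ as the unique point of $\bigcap_{n}I_{x\upharpoonright n}$. The crucial point, and the step I expect to be the main obstacle, is that this intersection is genuinely a singleton for \emph{every} branch, so that $f$ is total and the map is a bijection. Writing $I_{x\upharpoonright n}=[a_n,b_n)$, the sequence $\langle b_n\mid n\in\omega\rangle$ is strictly decreasing by the one-sided accumulation property above, so its limit $p$ satisfies $p<b_n$ for every $n$; since $a_n\le p$ as well, we get $p\in\bigcap_n[a_n,b_n)$, and $\mathrm{diam}(I_{x\upharpoonright n})\to0$ forces the intersection to equal $\{p\}$. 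This is exactly where the choice of half-open intervals with accumulation only at the right is used; a naive scheme would leave some branches with empty intersection and destroy totality. Surjectivity and injectivity are then routine: every real lies in a unique $I_s$ at each level, yielding a preimage branch, while distinct branches split into disjoint intervals at the first level of disagreement.

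Finally I would verify the two continuity claims. Continuity of $f$ is standard: given $\epsilon$, choose $n$ with $2^{-n}<\epsilon$; any $y\in{}^\omega\omega$ with $y\upharpoonright n=x\upharpoonright n$ has $f(x),f(y)\in I_{x\upharpoonright n}$, so $|f(x)-f(y)|<\epsilon$. For the inverse $g=f^{-1}$, fix $x=f(x^*)$ and $n\in\omega$; since $x\in I_{x^*\upharpoonright n}=[a_n,b_n)$ we have $x<b_n$, so for every $y\in[x,b_n)$ the point $y$ lies in $I_{x^*\upharpoonright n}$ and hence $g(y)\upharpoonright n=x^*\upharpoonright n=g(x)\upharpoonright n$. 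As $n$ is arbitrary, $g$ is continuous from the right, which is the desired property of the inverse (read in the Baire-space topology, i.e. $g(y)\to g(x)$ as $y\to x^+$).
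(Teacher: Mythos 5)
Your proof is correct, and it takes a genuinely different route from the one in the paper. The paper argues order-theoretically: it invokes a back-and-forth construction to get an order isomorphism between $({}^\omega\omega,\leq_{\mathrm{lex}})$ and $[0,1)$, observes that the product topology on the Baire space refines the order topology so that the isomorphism is continuous, and gets right-continuity of the inverse from the fact that coordinates of a strictly decreasing lexicographic sequence eventually stabilize; these pieces are then glued over an enumeration of $\ZZ$. You instead build the map explicitly from a Cantor scheme of half-open intervals $[a_s,b_s)$ whose children accumulate only at the right endpoint, and you verify totality, bijectivity and the two continuity claims directly from the scheme. The key computation --- that $\langle b_n\rangle$ is strictly decreasing along every branch, so the limit $p$ satisfies $a_n\le p<b_n$ for all $n$ and the nested intersection is a nonempty singleton --- is exactly right, and it is the explicit counterpart of the paper's appeal to the order structure of $[0,1)$; your right-continuity argument (for $y\in[x,b_n)$ the branch of $y$ agrees with that of $x$ up to $n$) likewise makes concrete what the paper phrases as coordinate stabilization. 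What your approach buys is self-containedness: you do not need the (not entirely free) fact that $({}^\omega\omega,\leq_{\mathrm{lex}})$ is order-isomorphic to $[0,1)$, and every estimate is metric and checkable. What the paper's approach buys is brevity and the observation that the map is an order isomorphism on each piece, which it reuses implicitly when discussing measurability of $f$ later. The only cosmetic points: your diameter bound should read $\mathrm{diam}(I_s)\le 2^{-(|s|-1)}$ for $|s|\ge 1$ rather than $2^{-|s|}$ (harmless), and you correctly flag that ``right-continuous inverse'' must be read with respect to the Baire-space topology on the codomain, which is also how the paper's own proof must be read.
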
 
\begin{proof} 
We fix an enumeration $\langle n_i\mid i\in\omega\rangle$ of $\ZZ$. 
It is easy to show by a back-and-forth construction that the linear orders $({}^\omega\omega , \leq_{\text{lex}})$ and $\bigl([0,1) , \leq\bigr)$ are isomorphic, 
since both of them 
are dense, have dense countable subsets and a minimum but no maximum. 
We can thus pick order isomorphisms $f_i\colon {}^\omega\omega\rightarrow [n_i,n_{i+1})$. 
In particular, $f_i$ is a homeomorphism with respect to the order topologies on both spaces. 
Since the product topology on the Baire space is finer than the order topology on $({}^\omega\omega , \leq_{\text{lex}})$, we have that 
$f_i$ is also continuous with respect to the former. 
Moreover, it is easy to see that $f_i^{-1}$ is right-continuous, since the images of a strictly decreasing sequence eventually stabilize in each coordinate. 
In order to glue these maps $f_i$ together, 
we will use the following notation. If $x$ is an element of the Baire space, we write $x^+$ for the element of ${}^\omega\omega$ that is obtained by shifting the coordinates by one and thus define $x^+(i)=x(i+1)$ for all $i\in\omega$. 
We can now define 
$f\colon {}^\omega\omega\rightarrow \RR$ by letting $f(x)=f_{x(0)}(x^+)$. It is then easy to check that $f$ is continuous and its inverse is right-continuous. 
\end{proof}

In the statement of the next theorem, let $\mathcal{C}$ denote any nonempty class of subsets of Polish spaces that is closed under forming complements, countable unions and preimages with respect to Borel-measurable functions such that all subsets of the Baire space in $\mathcal{C}$ are determined, for instance the class of Borel sets.

\begin{Thm}\label{prop:right_cont}
The restriction of $\preceq$ to the class of subsets of $\RR$ in $\mathcal{C}$ 
satisfies the $\mathsf{SLO}$ principle and is well-founded. 
\end{Thm}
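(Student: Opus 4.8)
The plan is to transfer everything to the Baire space through the continuous bijection $f\colon{}^{\omega}\omega\to\RR$ with right-continuous inverse provided by Fact \ref{continuous bijection}. Given subsets $A,B$ of $\RR$ in $\mathcal{C}$, I would set $A'=f^{-1}(A)$ and $B'=f^{-1}(B)$; since $f$ is continuous, hence Borel measurable, and $\mathcal{C}$ is closed under Borel preimages, the sets $A',B'$ are subsets of ${}^{\omega}\omega$ lying in $\mathcal{C}$, and so they are determined. The only direction of the resulting correspondence I will actually need is the easy one: if $g\colon{}^{\omega}\omega\to{}^{\omega}\omega$ is continuous with $A'=g^{-1}(B')$, then $h=f\circ g\circ f^{-1}$ reduces $A$ to $B$ and $h$ is right-continuous. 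Indeed, for a strictly decreasing sequence $x_k\to x$ we have $f^{-1}(x_k)\to f^{-1}(x)$ by right-continuity of $f^{-1}$, whence $g(f^{-1}(x_k))\to g(f^{-1}(x))$ and $h(x_k)\to h(x)$ by continuity of $g$ and $f$; and $h^{-1}(B)=A$ is a direct computation using that $f$ is a bijection. Thus $A'\leW B'$ implies $A\preceq B$.

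For the $\mathsf{SLO}$ principle I would invoke Wadge's Lemma on the Baire space, which applies to the determined sets $A',B'$: either $A'\leW B'$ or $B'\leW{}^{\omega}\omega\setminus A'$. Since $f^{-1}$ is a bijection it commutes with complementation, so ${}^{\omega}\omega\setminus A'=f^{-1}(\RR\setminus A)=(\RR\setminus A)'$. Feeding each of the two cases into the easy direction yields $A\preceq B$ or $B\preceq\RR\setminus A$, which is exactly $\mathsf{SLO}$ for $\preceq$.

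Well-foundedness is the part requiring genuine care, and the main obstacle is that the converse of the easy direction fails: a right-continuous reduction on $\RR$ conjugates to a map on ${}^{\omega}\omega$ that is continuous only at the eventually-constant points, so one cannot simply pull a descending $\prec$-chain back to a descending $\leW$-chain. Instead I would argue by ranks. By the theorem of Martin and Monk, $<_{\mathrm{W}}$ is well-founded on the determined subsets of ${}^{\omega}\omega$; let $\rho$ be the associated rank function, and note that complementation is an order-automorphism of the quasi-order $\leW$ and therefore preserves $\rho$. Assume towards a contradiction that $\langle A_n\mid n\in\omega\rangle$ is a strictly $\prec$-descending chain of $\mathcal{C}$-subsets of $\RR$, so that $A_{n+1}\preceq A_n$ and $A_n\not\preceq A_{n+1}$ for all $n$. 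The contrapositive of the easy direction gives $A'_n\not\leW A'_{n+1}$ for every $n$.

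Now I would run the following analysis. By Wadge's Lemma, for each $n$ either $A'_{n+1}<_{\mathrm{W}}A'_n$, in which case $\rho(A'_{n+1})<\rho(A'_n)$, or $A'_n$ and $A'_{n+1}$ are $\leW$-incomparable, in which case $A'_n\equiv_{\mathrm{W}}{}^{\omega}\omega\setminus A'_{n+1}$ and hence $\rho(A'_n)=\rho(A'_{n+1})$. In either case the ranks are non-increasing, so they are eventually constant, say equal to $\lambda$ for all $n\geq N$; for such $n$ we must always be in the second case, so $A'_n\equiv_{\mathrm{W}}{}^{\omega}\omega\setminus A'_{n+1}$. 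Applying this at $n=N$ and $n=N+1$ and using that complementation preserves $\equiv_{\mathrm{W}}$ gives $A'_N\equiv_{\mathrm{W}}A'_{N+2}$, so in particular $A'_N\leW A'_{N+2}$. On the other hand, transitivity of $\prec$ gives $A_{N+2}\prec A_N$, whence $A_N\not\preceq A_{N+2}$ and therefore $A'_N\not\leW A'_{N+2}$ by the easy direction, a contradiction. This shows that $\preceq$ is well-founded and completes the proof; beyond Fact \ref{continuous bijection}, the only nontrivial inputs are Wadge's Lemma and Martin--Monk well-foundedness on ${}^{\omega}\omega$, both available because every subset of ${}^{\omega}\omega$ in $\mathcal{C}$ is determined.
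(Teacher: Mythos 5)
Your proof is correct, and the $\mathsf{SLO}$ half is essentially identical to the paper's: transfer along the bijection $f$ of Fact \ref{continuous bijection}, observe that $h=f\circ g\circ f^{-1}$ is right-continuous, and feed Wadge's Lemma for the Baire space through this ``easy direction''. For well-foundedness, however, you take a genuinely different route. You treat the failure of the converse transfer as an obstruction and work around it with the Wadge rank: complementation preserves $\rho$, incomparable determined sets are duals of each other, so along a $\prec$-descending chain the ranks $\rho(A_n')$ are non-increasing, eventually constant, and then the duality relation $A_n'\equiv_{\mathrm{W}}{}^{\omega}\omega\setminus A_{n+1}'$ composed twice forces $A_N'\leW A_{N+2}'$, contradicting the contrapositive of the easy direction. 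The paper instead shows directly that the transfer is strictly monotone: if $A\prec B$ but $f^{-1}(A)\nleW f^{-1}(B)$, then $\mathsf{SLO}$ on the Baire space gives $f^{-1}(\RR\setminus B)\leW f^{-1}(A)$, hence $\RR\setminus B\preceq A\preceq B$, hence $B\preceq \RR\setminus B\preceq A$, contradicting strictness; so $A\prec B$ implies $f^{-1}(A)\ltW f^{-1}(B)$ and well-foundedness is inherited immediately. The paper's argument is shorter, needs no rank function, and yields the slightly stronger statement that $A\mapsto f^{-1}(A)$ carries $\prec$ into $\ltW$; your argument uses more machinery (Martin--Monk rank plus the duality of incomparable sets) but the same determinacy inputs, and the self-duality phenomenon it isolates is of independent interest. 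The only cosmetic quibble is your parenthetical claim that the conjugate of a right-continuous reduction is continuous exactly at the eventually-constant points, which is imprecise but never used.
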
 
\begin{proof} 
The restriction of the Wadge quasi-order to subsets of the Baire space in $\mathcal{C}$ satisfies the $\mathsf{SLO}$ principle and is well-founded by our determinacy assumption (see \cite[Theorems 21.14 \& 21.15]{Kechris_Classical}). 
Hence it is sufficient for the first claim to show that $A\preceq B$ holds for all subsets $A$, $B$ of $\RR$ in $\mathcal{C}$ with $f^{-1}(A)\leq f^{-1}(B)$. To this end, we assume that $g$ 
is a continuous reduction of $f^{-1}(A)$ to $f^{-1}(B)$ on the Baire space. Since $f$ and $g$ are continuous and $f^{-1}$ is right-continuous, 
their composition $h=f\circ g \circ f^{-1}$ is right-continuous as well. 
Hence $A=f(g^{-1}(f^{-1}(B)))=h^{-1}[B]$ holds by the choice of $g$ and we thus have $A\preceq B$ as required. 

For the second claim, it is sufficient to show that $f^{-1}(A) < f^{-1}(B)$ holds for all subsets $A$, $B$ of $\RR$ with $A\prec B$.  
Assuming that $f^{-1}(A) \not\leq f^{-1}(B)$ holds towards a contradiction, we have  
$f^{-1}(\RR\setminus B)
\leq f^{-1}(A)$ by the $\mathsf{SLO}$ principle. 
Now $\mathbb{R} \setminus B \preceq A$ by the claim that is proved in the previous paragraph. 
Since the relation $\preceq$ is transitive and $A \prec B$ holds, we thus obtain $\mathbb{R} \setminus B \preceq B$ and $B\preceq \mathbb{R} \setminus B$ by passing to complements. Since $\mathbb{R} \setminus B \preceq A$ we now have 
$B\preceq A$, but this contradicts the assumption that the reducibility $A\prec B$ is strict. 
\end{proof} 


Besides the previous result, very little is known about the precise structure of this reducibility. 
In order to compare 
it with reducibilities that were studied by Motto Ros, Selivanov and the second author in \cite{MR3417077}, recall that a function 
is called $(\Gamma,\Delta)$-measurable for classes $\Gamma$ and $\Delta$ if the preimage 
of any set 
in $\Delta$ is in $\Gamma$.


\begin{Observation} 
It can be easily checked that the bijection $f\colon {}^\omega\omega\rightarrow \RR$ that is defined in the proof of Fact \ref{continuous bijection} is both $(\mathbf{\Sigma}^0_2,\mathbf{\Sigma}^0_1)$- and $(\mathbf{\Sigma}^0_3,\mathbf{\Sigma}^0_3)$-measurable. 
Therefore the $\mathsf{SLO}$ principle is still valid and the reducibility is well-founded if these conditions are required for reductions in addition to the previous ones. 
However, \cite[Corollary 5.21]{MR3417077} shows that the reducibility with respect to $(\mathbf{\Sigma}^0_2,\mathbf{\Sigma}^0_2)$-measurable functions on $\RR$ does not satisfy the $\mathsf{SLO}$ principle. 
\end{Observation}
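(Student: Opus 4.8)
The plan is to verify the two measurability assertions about the bijection $f$ from Fact~\ref{continuous bijection} and then observe that they make the proof of Theorem~\ref{prop:right_cont} go through for the smaller class of reductions, the failure for $(\mathbf{\Sigma}^0_2,\mathbf{\Sigma}^0_2)$-measurable reductions being quoted directly from \cite[Corollary 5.21]{MR3417077}. Since $f$ is continuous, the content of the two assertions lies entirely on the side of $f^{-1}$: what is actually needed is that the conjugation $g\mapsto h=f\circ g\circ f^{-1}$ sends a \emph{continuous} reduction $g$ on the Baire space to a reduction $h$ on $\RR$ in the restricted class. Because $f$ is a continuous bijection, for any $S\subseteq\RR$ one has $h^{-1}(S)=f\bigl(g^{-1}(f^{-1}(S))\bigr)$, so the measurability class of $h$ is governed by how $f$ transports the Borel hierarchy \emph{forward}, equivalently by the measurability of $f^{-1}\colon\RR\to{}^\omega\omega$. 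I would therefore reduce everything to the two claims that $f$ maps open sets to $F_\sigma$ sets and maps $\mathbf{\Sigma}^0_3$ sets to $\mathbf{\Sigma}^0_3$ sets.

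For the first claim I would use the explicit description of $f$. Each basic open set $N_s$ is lexicographically convex, and on the block $\{x\mid x(0)=d\}$ the map $f$ agrees with the order isomorphism $f_d$ onto $[n_d,n_{d+1})$; hence $f(N_s)$ is an interval. An arbitrary open $V\subseteq{}^\omega\omega$ is a countable union $\bigcup_k N_{s_k}$, so $f(V)=\bigcup_k f(N_{s_k})$ is a countable union of intervals and thus $F_\sigma$. Substituting into the formula for $h^{-1}$ gives that $h$ is $(\mathbf{\Sigma}^0_2,\mathbf{\Sigma}^0_1)$-measurable: for open $S$ the sets $f^{-1}(S)$ and then $g^{-1}(f^{-1}(S))=:V$ are open in ${}^\omega\omega$ by continuity of $f$ and $g$, and $f(V)$ is $F_\sigma$.

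For the second claim I would exploit that $f^{-1}$, being right-continuous, is continuous off a countable set. Each coordinate map $x\mapsto f^{-1}(x)(j)$ is integer valued and right-continuous, hence has only countably many left jump points, and the union $D$ of these over $j\in\omega$ is countable; on $R=\RR\setminus D$ the restriction $f^{-1}{\upharpoonright}R$ is continuous. For $W\in\mathbf{\Sigma}^0_3({}^\omega\omega)$ I would split $(f^{-1})^{-1}(W)=f(W)$ into its intersections with $R$ and with $D$: the first piece is relatively $\mathbf{\Sigma}^0_3$ in the co-countable $G_\delta$ set $R$, hence of the form $R\cap Z$ with $Z\in\mathbf{\Sigma}^0_3(\RR)$, and therefore $\mathbf{\Sigma}^0_3$ since $R\in\mathbf{\Pi}^0_2\subseteq\mathbf{\Sigma}^0_3$ and $\mathbf{\Sigma}^0_3$ is closed under finite intersections; the second piece is a subset of the countable set $D$, so is $\mathbf{\Sigma}^0_2$. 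Their union is $\mathbf{\Sigma}^0_3$, which yields that $h$ is $(\mathbf{\Sigma}^0_3,\mathbf{\Sigma}^0_3)$-measurable by the same substitution. I expect this level-$3$ bookkeeping to be the main obstacle, since the crude estimate obtained by pushing $\mathbf{\Sigma}^0_3$ through its generators overshoots to $\mathbf{\Sigma}^0_4$; one genuinely needs the continuity of $f^{-1}$ off a countable set to stay at level $3$, and this is also why the analogous stability fails one level lower, forcing the direct interval computation in the $(\mathbf{\Sigma}^0_2,\mathbf{\Sigma}^0_1)$ case.

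Finally I would record that these verifications suffice. Taking the reductions to be finite compositions of functions that are right-continuous and in addition $(\mathbf{\Sigma}^0_2,\mathbf{\Sigma}^0_1)$- and $(\mathbf{\Sigma}^0_3,\mathbf{\Sigma}^0_3)$-measurable keeps the class closed under composition, so the resulting relation is still a quasi-order. The only witnesses used in the proof of Theorem~\ref{prop:right_cont} are the conjugates $h=f\circ g\circ f^{-1}$, which by the above are themselves right-continuous and measurable and thus legitimate single factors, while the transitivity steps merely compose such witnesses and so remain in the class. Hence the $\mathsf{SLO}$ principle and well-foundedness transfer verbatim, and the concluding failure of $\mathsf{SLO}$ for $(\mathbf{\Sigma}^0_2,\mathbf{\Sigma}^0_2)$-measurable reductions is a direct citation of \cite[Corollary 5.21]{MR3417077}.
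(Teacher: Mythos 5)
The paper offers no argument for this Observation beyond ``it can be easily checked'', so there is no proof of record to compare yours against; judged on its own terms, your verification is correct and supplies exactly what the remark needs. You rightly observe that the literal statement about preimages under $f$ is vacuous (a continuous function is $(\mathbf{\Sigma}^0_\alpha,\mathbf{\Sigma}^0_\alpha)$-measurable for every $\alpha$), and that the substantive content concerns forward images under $f$, equivalently the measurability of $f^{-1}$, which is what the conjugation $g\mapsto f\circ g\circ f^{-1}$ from the proof of Theorem \ref{prop:right_cont} actually consumes. Both computations are sound: each $N_s$ is $\leq_{\mathrm{lex}}$-convex and lies in a single block $\{x\mid x(0)=s(0)\}$ on which $f$ is an order isomorphism, so images of basic open sets are intervals and images of open sets are $F_\sigma$; and each coordinate of $f^{-1}$ is an integer-valued right-continuous function, hence locally constant to the right of every point, so its maximal intervals of constancy have nonempty interior, are therefore countably many, and can fail continuity only at their left endpoints. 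That last sentence is the one-line justification you should insert for your bare assertion that an integer-valued right-continuous function has only countably many jump points, as it is the only step in your argument that is not completely routine. The split of $f(W)$ over the co-countable $G_\delta$ set $R$ and the countable set $D$ then correctly keeps $\mathbf{\Sigma}^0_3$ at level three, and your aside that the same bookkeeping breaks down at level two is consistent with the cited failure of $\mathsf{SLO}$ for $(\mathbf{\Sigma}^0_2,\mathbf{\Sigma}^0_2)$-reductions. Finally, reading the restricted class as finite compositions of functions that are simultaneously right-continuous and measurable, so that it is closed under composition and the conjugates are legitimate single factors, is the right interpretation of ``in addition to the previous ones'' and is what the transitivity steps in the well-foundedness argument require.
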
 

Finally, we would like to mention that Pequignot \cite{MR3372615} introduced a different notion of reducibility for arbitrary Polish spaces for which the $\mathsf{SLO}$ principle holds.

\section{Open questions}

We conclude with some open questions. We first ask wether the complexity of the sets that are constructed in Theorem \ref{embed Pomega mod fin} to embed $\mathcal{P}(\omega)/\mathsf{fin}$ is optimal. 

\begin{Question} 
Does Theorem \ref{embed Pomega mod fin} hold for the class of sets that are simultaneously $D_2({\bf \Sigma}^0_1)$ and $\check{D}_2({\bf \Sigma}^0_1)$? 
\end{Question} 

Moreover, we ask for complete descriptions of certain restricted classes of sets equipped with the Wadge quasi-order. 

\begin{Question} 
Can we give combinatorial descriptions of the following classes of sets equipped with the Wadge quasi-order? 
\begin{enumerate-(a)} 
\item 
Countable sets 
\item 
Countable unions of intervals 
\item 
Sets that are reducible to $\QQ$ 
\end{enumerate-(a)} 
\end{Question}

Since Theorem \ref{minimal set} shows that there is a minimal set below $\QQ$, this suggests to ask for a description of the well-founded part of the Wadge order and in particular to ask the next two questions. 

\begin{Question} 
Are there any sets besides the unique minimal set in the well-founded part of the restriction of the Wadge quasi-order to the sets that are reducible to $\QQ$? 
\end{Question}

\begin{Question} 
Are there infinitely many minimal Borel sets? 
\end{Question}

Moreover, the existence of gaps which is proved in 
Theorem \ref{gaps} suggests to ask whether there are types of gaps in the Wadge order that do not exist in $\mathcal{P}(\omega)/\mathsf{fin}$ and in particular the following question. 

\begin{Question} 
Is there an $(\omega,\omega)$-gap in the Wadge quasi-order? 
\end{Question}

We can further  ask about reductions that are defined with respect to more restricted classes of functions such as the following. 

\begin{Question} 
Does Theorem \ref{embed Pomega mod fin} hold for the classes of Lipschitz and $C^\infty$-functions? 
\end{Question}

\bibliographystyle{plain}
\bibliography{myreference}

\end{document}